\documentclass[11pt,a4paper]{article}
\usepackage[T1]{fontenc}
\usepackage[utf8]{inputenc}
\usepackage{lmodern}
\usepackage{amsmath,amssymb,amsthm,mathtools}
\usepackage{relsize}
\usepackage{pifont}
\usepackage[bbgreekl]{mathbbol}
\DeclareSymbolFontAlphabet{\mathbb}{AMSb}
\DeclareSymbolFontAlphabet{\mathbbl}{bbold}
\usepackage[margin=1.05in]{geometry}
\usepackage{microtype}
\usepackage{indentfirst}
\usepackage{tikz}
\usepackage{enumitem}
\usepackage[hidelinks,unicode]{hyperref}
\setlist[enumerate]{leftmargin=1.8em,itemsep=3pt,topsep=3pt}
\setlist[itemize]{leftmargin=1.5em,itemsep=3pt,topsep=3pt}
\newcommand{\Prism}{{\mathlarger{\mathbbl{\Delta}}}}
\newcommand{\X}{\mathfrak X}
\newcommand{\Oc}{\mathcal O_C}
\newcommand{\Ainf}{A_{\mathrm{inf}}}
\newcommand{\BdR}{B_{\mathrm{dR}}^+}
\newcommand{\RG}{R\Gamma}
\newcommand{\otL}{\mathbin{\otimes^{\mathrm L}}}
\newcommand{\cofib}{\operatorname{cofib}}

\newcommand{\Tot}{\operatorname{Tot}}
\newcommand{\rk}{\operatorname{rk}}
\newcommand{\im}{\operatorname{im}}
\newcommand{\coker}{\operatorname{coker}}

\newcommand{\gr}{\operatorname{gr}}
\newcommand{\cris}{\mathrm{cris}}
\newcommand{\HW}{\mathrm{HW}}
\newcommand{\Hdg}{\mathrm{Hdg}}
\newcommand{\Nyg}{\mathcal N}
\newcommand{\calQ}{\mathcal Q}
\newcommand{\calP}{\mathcal P}

\newcommand{\HH}{\mathrm{H}}
\newcommand{\dc}{%
  \mathrel{\mathchoice
    {\dcl{0.16}}
    {\dcl{0.16}}
    {\dcl{0.11}}
    {\dcl{0.08}}
  }%
}
\newcommand{\dcl}[1]{\,%
  \mathrel{\raisebox{-0.15ex}{\hbox{%
  \begin{tikzpicture}[scale=#1, line width=0.6pt]
    \draw (0,0) -- (2,0) -- (1.2,1.75) -- (0.8,1.75) -- cycle;
    \draw (1.6,0)-- (0.8,1.75);
  \end{tikzpicture}}}%
}}
\newcommand{\Ht}{\widetilde{\mathrm{H}}}
\newcommand{\taut}{\widetilde\tau}

\newcommand{\Frac}{\operatorname{Frac}}

\newcommand{\cdstimes}{{\scriptstyle
  \widehat{\text{\ding{84}}^{{\scriptscriptstyle\mathrm{L}}}}}}

\theoremstyle{plain}
\newtheorem{theorem}{Theorem}[section]
\newtheorem{proposition}[theorem]{Proposition}
\newtheorem{lemma}[theorem]{Lemma}
\newtheorem{corollary}[theorem]{Corollary}
\newtheorem{conjecture}[theorem]{Conjecture}
\theoremstyle{definition}

\newtheorem{remark}[theorem]{Remark}
\newtheorem{notation}[theorem]{Notation}

\numberwithin{equation}{section}
\hypersetup{
 pdftitle={Hodge and Hodge-Witt polygons under specialization},
 pdfsubject={Mixed-characteristic comparison, equal-characteristic specialization, and strict
 examples},
 pdfauthor={Yuan Yang},
 pdfkeywords={Hodge-Witt, Nygaard filtration, prismatic cohomology, polygon comparison}
}
\title{Hodge and Hodge--Witt polygons under specialization}
\author{Yuan Yang}
\date{\today}
\begin{document}
\maketitle
\begin{abstract}
We show that, in equal characteristic $p$, the Hodge--Witt polygon can only move down and
domino numbers can only increase under specialization in smooth proper families. We also show that the Hodge polygon of the
generic fiber of a proper smooth $p$-adic formal scheme over $\Oc$ lies on or above the Hodge--Witt
polygon of its special fiber, where $C$ is a complete algebraically closed extension of
$\mathbb{Q}_p$. We further show that, in this mixed-characteristic setting, if the crystalline cohomology of the
special fiber is torsion-free in degrees $n$ and $n+1$, then the degree-$n$ Hodge polygon of the
generic fiber lies on or below the abstract Hodge polygon of the special fiber. We illustrate these comparisons with explicit examples. 
\end{abstract}

\tableofcontents

\section{Introduction}\label{sec:statement}
The study of Newton polygons under specialization and of their comparison with Hodge polygons goes
back to conjectures of Grothendieck and Katz, respectively, and to work of Katz, Mazur, Ogus
\cite{Mazur,BO}, and Crew \cite{CrewSpecialization}. Katz and Crew proved that Newton polygons can only move up under specialization
\cite[Corollary~2.3.2]{KatzSlope}, \cite[Theorem~2.1]{CrewSpecialization}, and it is a basic fact in $p$-adic Hodge theory that 
the Hodge polygon of the generic fiber lies on or below the Newton polygon of the special fiber. 
We prove three further polygon comparisons, mainly concerning the
Hodge--Witt polygon of Illusie and Ekedahl \cite{Ekedahl}, beginning in equal characteristic \(p\) and
then passing to mixed characteristic.

For a perfect field $k$ of characteristic $p$, write $W=W(k)$. In mixed characteristic,
$k$ denotes the residue field of $C$. In cohomological degree $n$, our convention is that the Hodge polygon, denoted \(\Hdg_n(X)\), has slope $i$ with multiplicity
$h^{i,n-i}=\dim \mathrm{H}^{n-i}(X,\Omega_X^i)$, where the dimension is taken over the base field.
When all Hodge--Witt numbers are nonnegative, the Hodge--Witt polygon, denoted \(\HW_n(X)\), similarly has slope $i$ with
multiplicity $h_W^{i,n-i}$. In general, we use the halfspace definition, 
which also allows negative Hodge--Witt numbers. The Newton polygon is denoted \(\operatorname{Newt}_n(X)\).

Our first comparison shows that the loci defined by bounding the Hodge--Witt polygons from above in a smooth proper
family in characteristic $p$ are closed.

\begin{theorem}\label{thm:equalchar}
Let \(S\) be a connected \(\mathbb F_p\)-scheme. Let $f:X\to S$ be a proper smooth morphism of
$\mathbb F_p$-schemes with connected geometric fibers. Fix an integer $n\geqslant0$.
For $x\in S$, write
$X_{\bar x}$ for the geometric fiber over $x$, and let \(b\) be the common $n$-th Betti number of \(X_{\bar x}\). 
For any continuous function $P:[0,b]\to\mathbb R$, the locus
\[\left\{x\in S:\HW_n(X_{\bar x})\leqslant P\right\}\]
is Zariski closed. Moreover, it is locally the zero set of a finitely generated ideal. 
\end{theorem}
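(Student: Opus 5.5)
The plan is to reduce the statement to two facts. First, the Hodge--Witt polygon is determined by finitely many integer-valued functions on \(S\), each of which is upper semicontinuous in the strong sense of being cut out by Fitting ideals of perfect complexes. Second, the condition \(\HW_n\leqslant P\) is a finite Boolean combination of inequalities on these functions that is monotone in the right direction. Concretely, I would use the halfspace description: \(\HW_n(X_{\bar x})\) is the lower boundary of the intersection of the halfplanes
\[
y\geqslant i\,t-c_i(x),\qquad 0\leqslant i\leqslant n,
\]
where \(c_i(x)\) is an integer combination of Hodge--Witt numbers. Via Ekedahl's formula \(h_W^{i,j}=m^{i,j}+T^{i,j}-2T^{i-1,j+1}+T^{i-2,j+2}\), each \(c_i(x)\) can be rewritten as a combination of Hodge numbers, slope numbers and domino numbers. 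The polygon is the finite family \((c_i(x))_i\) in \(\mathbb Z^{n+1}\). Since \(b\) is locally constant and the Hodge and domino numbers are bounded, \(x\mapsto(c_i(x))_i\) takes finitely many values. Hence \(\{\HW_n\leqslant P\}\) is the preimage of a finite subset \(\Sigma\) of \(\mathbb Z^{n+1}\), determined by the continuous function \(P\).

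The key step is to express each \(c_i\), or a suitable monotone reparametrization of it, as \(\dim_{\kappa(\bar x)}\) of a cohomology group of a perfect complex on \(S\) that commutes with base change. I would take the Nygaard-graded pieces of crystalline cohomology. By the Cartier isomorphism, \(\gr^i_{\Nyg}\RG_\cris(X_{\bar x}/W)\otL k\) is computed by \(\tau^{\leqslant i}F_*\Omega^\bullet_{X_{\bar x}}\), and relatively this is \(Rf_*\) of the conjugate-truncated relative de Rham complex. The latter is a perfect complex on \(S\) whose formation commutes with arbitrary base change. Ekedahl's description of the domino numbers as dimensions of \(\ker d\) and \(\coker d\) on \(H^j(W\Omega^i)\) modulo \(V\)-torsion should translate, at the level of partial sums \(\sum_{a\leqslant i}h_W^{a,n-a}\), into alternating sums of fibre dimensions of cohomology of these truncated complexes and of the Hodge complexes \(Rf_*\Omega^a_{X/S}\). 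I would then arrange the combination so that the quantity bounding the polygon from above is an upper semicontinuous rank. The consistency check is the theorem's companion claim that domino numbers only increase under specialization, which is exactly the statement that this polygon moves down.

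Given this, each condition \(\dim H^j(K\otL\kappa(\bar x))\geqslant r\) for a perfect complex \(K\) is locally the vanishing locus of a Fitting ideal of a bounded complex of finite free modules, so it is finitely generated. The constructible stratification by the vector \((c_i)\) is then given by finitely many such ideals. Specialization-stability of \(\{\HW_n\leqslant P\}\) is what makes the preimage of \(\Sigma\) a union of closed strata rather than a difference of them, and this yields both Zariski-closedness and the local finite generation.

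The main obstacle is the middle step. The Hodge--Witt numbers involve the \(V\)-torsion and slope data of \(H^j(W\Omega^i)\), which is not coherent and not obviously computed by a perfect complex over \(\mathcal O_S\) in families. I would first try to prove that a finite level \(W_r\), with \(r\) bounded in terms of \(n\) and \(\dim X\), already detects the domino numbers, as in Ekedahl's finiteness results. One could then work with the perfect complexes \(Rf_*W_r\Omega^\bullet\) filtered by the Nygaard or conjugate filtration. If that fails, the fallback is to prove only that each \(c_i\) is constructible and lower semicontinuous. Closedness then still follows, but the finitely generated ideal would have to come from a separate Fitting-ideal argument.
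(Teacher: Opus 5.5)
Your proposal has a genuine gap at exactly the step you flag as the main obstacle. There is no fixed-level coherent formula for the cut values $S_a\{h_W^{i,n-i}\}$. The Nygaard graded pieces $\tau_{\leqslant b}F_*\Omega^\bullet$ and the Hodge complexes only see level-one data, whereas Hodge--Witt numbers see $V$-torsion to unbounded depth: an elementary domino has source of type $k[[V]]$.

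The family $S\times_k\mathcal E$ of Theorem~\ref{thm:strict-specialization} makes this concrete. There $h^{0,2}=3$ on every fibre, while $h_W^{0,2}$ jumps from $1$ to $2$. For $a=1$ the Nygaard quotient is $W_r\mathcal O$, and $\ell(\HH^2(X_{\bar x},W_r\mathcal O))=r\,h_W^{0,2}+O(1)$. So the jump is invisible at $r=1$ and appears only in the growth rate in $r$, with an error that depends on the torsion of the fibre. Your hope that a bounded level $W_r$ isolates the domino numbers is therefore unsupported. Even an expression of $S_a$ as an alternating sum of fibre dimensions would not help, since alternating sums of upper semicontinuous functions need not be semicontinuous.

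Your rescue, that specialization-stability makes the preimage of $\Sigma$ a union of closed strata, is circular. Stability under specialization is precisely what the theorem asserts, and the paper deduces domino semicontinuity from it, not conversely. Your fallback also has the direction reversed. Since $\{\HW_n\leqslant P\}=\bigcap_a\{S_a\geqslant c_a(P)\}$ with $c_a(P)=\sup_u(au-P(u))$, you need each $S_a$ to be upper semicontinuous, not lower.

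The missing idea is to let the asymptotics in $r$ do the work. Replace the coordinate ring of an affine chart by its perfection $B$, which has the same underlying space. (Your $Rf_*W_r\Omega^\bullet$ over an imperfect base is not obviously perfect or compatible with base change.) Then the complexes $\calP^a_r=\RG(X,W\Omega^\bullet/\Nyg^{\geqslant a}_r)$ are perfect over $W(B)$, killed by $p^{ra}$, and commute with base change. Taking Witt coordinates of minors shows that each $x\mapsto\ell^n(\calP^a_r,x)$ has closed superlevel sets.

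Ekedahl's structure theory of coherent Raynaud complexes (diagonal $t$-structure, type-I factors and elementary dominos) gives $\ell^n(\calP^a_r,x)=rS_a\{h_W^{i,n-i}(X_{\bar x})\}+O(1)$. The exact sequences $0\to\sigma_*^r\calQ^a_\ell\to\calQ^a_{r+\ell}\to\calQ^a_r\to0$ make $r\mapsto\ell^n(\calP^a_r,x)$ subadditive. Hence $S_a=\inf_r\ell^n(\calP^a_r,x)/r$, so $\{S_a\geqslant c\}=\bigcap_r\{\ell^n(\calP^a_r,x)\geqslant rc\}$ is closed, as an intersection of closed sets. This replaces the single coherent formula you were looking for.

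Because this is an infinite intersection, the finitely generated ideal does not come from one Fitting ideal. The paper obtains it by descending the family to a noetherian model, where every closed subset is cut out by a finitely generated ideal.
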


In particular, Hodge--Witt polygons can only move down under specialization: if $y$ is a specialization of $x$,
then \(\HW_n(X_{\bar y})\leqslant\HW_n(X_{\bar x})\).
This direction of semicontinuity is opposite to that of Newton polygons in \cite[Corollary~2.3.2]{KatzSlope} \cite[Theorem~2.1]{CrewSpecialization}. 
Together with Newton polygon specialization, this gives, for any specialization $x\leadsto y$,
a chain of polygons
\[
 \operatorname{Newt}(X_{\bar y})\ \geqslant\ \operatorname{Newt}(X_{\bar x})
 \ \geqslant\ \HW(X_{\bar x})\ \geqslant\ \HW(X_{\bar y})
\]
in each cohomological degree.

As a corollary, for any proper smooth morphism $f:X\to S$ as above, any $i,j\geqslant0$,
and any $m\in\mathbb Z$, the locus where the domino number \(T^{i,j}\) is at least $m$
\[
 \left\{x\in S:T^{i,j}(X_{\bar x})\geqslant m\right\}
\]
is also Zariski closed and locally the zero set of a finitely generated ideal (Corollary~\ref{cor:dominospecialization}). 
Thus every domino number is upper
semicontinuous and can only increase under specialization. This extends Crew's semicontinuity
theorem for $T^{0,2}$ of surfaces \cite[Theorem~7]{Crew} to all domino numbers in arbitrary
dimension. The Hodge--Witt locus is consequently open (Corollary~\ref{cor:hwopen}).

The decrease can be strict for threefolds in every characteristic $p>0$. Dimension \(3\) is minimal:
as Ekedahl observed \cite[p.~114]{Ekedahl}, Hodge--Witt numbers are locally constant in proper
smooth families of surfaces. Moreover, whenever a surface lifts to characteristic zero, the
Hodge--Witt numbers of the special fiber equal the Hodge numbers of the lifting. 

Our second comparison relates the Hodge polygon of a characteristic-zero generic fiber to the
Hodge--Witt polygon of its special fiber.

\begin{theorem}\label{thm:main}
Let $C$ be a complete algebraically closed non-archimedean extension of $\mathbb{Q}_p$, with ring of
integers $\Oc$ and residue field $k$. Let $\X$ be a proper smooth $p$-adic formal scheme over 
$\Oc$ with connected geometric fibers. Write $X_\eta$ for its rigid analytic generic
fiber, $X_k=\X\otimes_{\Oc}k$ for its special fiber, and $W=W(k)$. Fix $n\geqslant0$. 
Then \[\Hdg_n(X_\eta)\ \geqslant\ \HW_n(X_k),\]and the two polygons have the same endpoints.
\end{theorem}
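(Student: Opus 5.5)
We compare both polygons to a third object built from the $\Ainf$-cohomology of Bhatt--Morrow--Scholze, $\RG_{\Ainf}(\X)$, together with its Nygaard filtration. Two specializations are available. Base change along $\theta$ (after inverting $p$, or via $\BdR$) recovers de Rham cohomology of $X_\eta$ with its Hodge filtration. Base change along $\Ainf\to W(k)$ recovers $\RG_\cris(X_k/W)$. On the crystalline side, the Nygaard filtration on the Frobenius-twisted prismatic complex of $X_k$ has graded pieces that are the conjugate-filtered pieces $\tau^{\le i}$ of $\Omega^\bullet_{X_k}$. By Illusie--Raynaud and Ekedahl, the associated slope data is exactly what computes the Hodge--Witt numbers. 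In particular, the half-space description of $\HW_n(X_k)$ should be expressible through lengths of torsion in $\HH^n_\cris$ together with dimensions of $\HH^j(W\Omega^i)$ modulo $V$-torsion. The endpoints agree because both polygons have total length $b=\dim_C \HH^n_{dR}(X_\eta)=\rk_W \HH^n_\cris(X_k)[1/p]$. Their total heights agree as well: $\sum_i i\,h^{i,n-i}$ on the generic side, and $\sum_i i\,h_W^{i,n-i}$ on the special side, the latter being determined by the Euler-characteristic identity $\sum_j(-1)^jh_W^{i,j}=\sum_j(-1)^jh^{i,j}$. Deformation invariance of $\chi(\Omega^i)$ together with Hodge degeneration in characteristic $0$ (Scholze) then gives the top endpoint in degree $n$, once we use that $h_W^{i,n-i}$ differs from the naive Hodge numbers only by domino corrections that cancel in the weighted sum.

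The inequality itself will come from a lattice argument. Let $M=\HH^n(\RG_{\Ainf}(\X))$. The $\varphi$-structure and the Nygaard filtration give, after specializing to $W(k)$, a Frobenius-stable lattice in $\HH^n_\cris(X_k)[1/p]$ whose "Hodge-type" polygon with respect to $\varphi$ is bounded below by $\HW_n(X_k)$. This is the content of Ekedahl's description of the Hodge--Witt polygon as the Hodge polygon of the Dieudonné--Raynaud module, taken in its half-space form. After specializing to $\BdR$, the same object gives a lattice $\HH^n_{\mathrm{\acute et}}(X_\eta,\mathbb Z_p)\otimes\BdR$ versus $\HH^n_{dR}\otimes\BdR$, and the relative position of these two lattices is $\Hdg_n(X_\eta)$. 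The plan is to compare relative positions through the Breuil--Kisin--Fargues module $M$. The key is a semicontinuity statement for elementary divisors of a single $\Ainf$-lattice in $M[1/\mu]$ measured at the two points $\theta$ and $W(k)$. Specifically, for each $r$ the sum of the $r$ smallest $\xi$-adic elementary divisors is bounded above by the analogous sum at the crystalline point. We will prove this by taking exterior powers $\wedge^r$ and showing that if $\varphi(\wedge^r\mathfrak M)\subset \xi^{a}\wedge^r\mathfrak M$ over $\Ainf$, then the same divisibility holds by $p^a$ after reduction to $W(k)$, since $\xi\mapsto p$.

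The main obstacle is torsion. $M$ need not be free, and the crystalline side has torsion that the Hodge--Witt numbers genuinely see, including negative $h_W$ through dominoes. We therefore cannot simply apply the exterior-power argument to a free lattice. First we will work with the half-space definition of $\HW_n$, writing it as an intersection of conditions $\sum_{i\le r}(\text{slopes})\ge c_r$. Then we will verify each condition by a length computation on the complex $\RG_{\Ainf}(\X)\otL_{\Ainf}W(k)$ using the Nygaard-filtered graded pieces. For each $r$, the condition should reduce to comparing $\dim_C$ of a truncation of the Hodge filtration with the length of the corresponding Nygaard-graded piece of crystalline cohomology. Upper semicontinuity of that length under the specialization from $\Ainf$ (a valuation-type ring with two points) to $W(k)$ should supply the inequality.
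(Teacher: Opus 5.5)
\textbf{The lattice argument proves the opposite inequality.} Under $\rho\colon\Ainf\to W$, $\xi\mapsto p$, divisibility of minors can only increase. So your exterior-power step yields $\beta_1+\cdots+\beta_\ell\leqslant\alpha_1+\cdots+\alpha_\ell$, where the $\beta_j$ are the $\xi$-adic elementary divisors of Frobenius (the Hodge slopes of $X_\eta$) and the $\alpha_j$ are those of the specialized crystalline lattice. That is $\Hdg^{\mathrm{abs}}_n(X_k)\geqslant\Hdg_n(X_\eta)$, an \emph{upper} bound for the generic Hodge polygon. This is the paper's Theorem~\ref{thm:abstracthodge}, and even that needs the $\Ainf$-module to be free, i.e.\ $\HH^n_{\cris}(X_k/W)$ and $\HH^{n+1}_{\cris}(X_k/W)$ torsion-free (Remark~\ref{rem:nonfreebkf} gives a nonfree counterexample). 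Your other input, Ekedahl's $\Hdg^{\mathrm{abs}}_n(X_k)\geqslant\HW_n(X_k)$, also puts the lattice polygon on top. The two inequalities therefore do not chain, and nothing follows about $\Hdg_n(X_\eta)$ versus $\HW_n(X_k)$. The Frobenius lattice only sees $\Hdg^{\mathrm{abs}}$. The extra dominos separating it from $\HW$ are exactly what a lower bound must account for. Semicontinuity must therefore be applied to an invariant that grows when the polygon moves down, namely the weighted sums $S_a$ realized as cohomology lengths.

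\textbf{The fallback points the right way but is missing amplification in $r$.} With a single Nygaard quotient or graded piece it cannot give the theorem. The length of $\HH^n\RG(X_k,W\Omega^\bullet/\Nyg^{\geqslant a}W\Omega^\bullet)$ differs from $S_a\{h_W^{i,n-i}(X_k)\}$ by torsion and domino terms, since its graded pieces specialize to $\tau_{\leqslant b}\Omega^\bullet_{X_k}$. For $a=1$ it gives only $h^{0,n}(X_\eta)\leqslant h^{0,n}(X_k)$, which is weaker than the required $h^{0,n}(X_\eta)\leqslant h_W^{0,n}(X_k)$. The paper's argument runs as follows.
\begin{itemize}
\item Take the global sections $\calP^a_r$ of Andriopoulos's $r$-Nygaard quotients of $\varphi^*\Prism$. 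These are perfect over $\Ainf$, vanish after inverting $\prod_{j<r}\varphi^{-j}(\xi)$, and specialize to $\RG(X_k,W\Omega^\bullet/\Nyg^{\geqslant a}_rW\Omega^\bullet)$.
\item At each of the $r$ pairwise non-associate primes $\varphi^{-j}(\xi)$, Hodge--Tate degeneration shows the $\varphi^j_*\BdR$-length of $\HH^n$ is exactly $S_a\{h^{i,n-i}(X_\eta)\}$.
\item A minors argument over the non-noetherian ring $\Ainf$ shows the crystalline length is at least $rS_a\{h^{i,n-i}(X_\eta)\}$. Note that $\Ainf$ is not a two-point valuation-type ring: all the $\varphi^{-j}(\xi)$ map to $p$.
\item Ekedahl's theory of coherent Raynaud complexes shows that this crystalline length is $rS_a\{h_W^{i,n-i}(X_k)\}+O(1)$, with error independent of $r$.
\end{itemize}
Dividing by $r$ and letting $r\to\infty$ gives $S_a\{h\}\leqslant S_a\{h_W\}$ for all $a$, which is the half-space form of the claim.

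\textbf{Your endpoint argument is also wrong.}
\begin{itemize}
\item The Euler-characteristic identity is an alternating sum over $j$ at fixed $i$, so it cannot determine $\sum_i i\,h_W^{i,n-i}$ in a single degree $n$.
\item The numbers $h_W^{i,n-i}$ differ by domino terms from the slope numbers $m^{i,n-i}$, not from the Hodge numbers.
\item The correct reason is that these domino terms telescope, giving $\sum_i i\,h_W^{i,n-i}=\sum_\lambda\lambda m_\lambda$. This is the Newton endpoint, which agrees with the generic Hodge endpoint.
\end{itemize}
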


Thus the usual Newton--Hodge comparison extends to
\[
 \operatorname{Newt}(X_k)\ \geqslant\ \Hdg(X_\eta)
 \ \geqslant\ \HW(X_k).
\]
The theorem requires neither projectivity, algebraization, nor descent to a discretely valued field.
Crystalline torsion is allowed, and no degeneration hypothesis is imposed on the special-fiber
Hodge--de~Rham spectral sequence. 

The generic-fiber Hodge polygon is not, in general, determined by the special fiber. Li constructed
smooth projective threefolds with isomorphic special fibers whose characteristic-zero liftings have
different Hodge numbers \cite[Proposition~2.3]{Li20}. Theorem~\ref{thm:main} nevertheless provides a
common lower bound for the Hodge polygons of all proper smooth liftings covered by the theorem. 
These examples are analyzed in Section~\ref{subsubsec:li}.

Our third comparison supplies a complementary upper bound. The abstract Hodge polygon
$\Hdg^{\mathrm{abs}}(X_k)$ is defined by the elementary divisors of crystalline Frobenius on the
torsion-free quotient of crystalline cohomology; see Section~\ref{subsec:abstracthodge}.

\begin{theorem}\label{thm:abstracthodge}
In the setting of Theorem~\ref{thm:main}, fix $n\geqslant0$, and suppose that
$\mathrm{H}^n_{\cris}(X_k/W)$ and $\mathrm{H}^{n+1}_{\cris}(X_k/W)$ are torsion-free. Then
\[\Hdg^{\mathrm{abs}}_n(X_k)\ \geqslant\ \Hdg_n(X_\eta)\]
with common endpoints. In particular,
\[
 \operatorname{Newt}_n(X_k)\ \geqslant\ \Hdg^{\mathrm{abs}}_n(X_k)
 \ \geqslant\ \Hdg_n(X_\eta)\ \geqslant\ \HW_n(X_k)
\]
and all four polygons have the same endpoints.
\end{theorem}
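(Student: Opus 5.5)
The plan is to use the $\Ainf$-cohomology of Bhatt--Morrow--Scholze, $\RG_{\Ainf}(\X)$, which specializes to crystalline cohomology of $X_k$ and to de Rham cohomology of $X_\eta$. The torsion-freeness hypothesis lets us control the relevant lattice.

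The first step is to show that $M=\HH^n_{\Ainf}(\X)$ is a finite free $\Ainf$-module. By the BMS results, $\HH^n_{\Ainf}(\X)\otimes_{\Ainf}W(k)\hookrightarrow \HH^n_{\cris}(X_k/W)$, with cokernel the $W$-torsion-free part of the $\mathrm{Tor}_1$ term coming from $\HH^{n+1}$. If $\HH^n_\cris$ and $\HH^{n+1}_\cris$ are both torsion-free, then the torsion of $\HH^{n+1}_{\Ainf}$ is killed after base change to $W$. The BMS structure theorem, together with the fact that $\HH^{n+1}_{\Ainf}[1/p]$ is free, then gives that $M$ is free and that
\[ M\otimes_{\Ainf}W(k)\cong \HH^n_\cris(X_k/W). \]
This base-change isomorphism is compatible with Frobenius.

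The second step is to read off the two polygons from the same object. Frobenius gives an isomorphism $\varphi_M:\varphi^*M[1/\varphi(\xi)]\xrightarrow{\sim}M[1/\varphi(\xi)]$, i.e.\ a Breuil--Kisin--Fargues module. We base-change along $\theta\circ\varphi^{-1}$ to $\BdR$. The relative position of the two lattices $\varphi^*M\otimes \BdR$ and $M\otimes\BdR$ is then given by the Hodge filtration on $\HH^n_{\mathrm{dR}}(X_\eta/C)$. This uses the comparison $\RG_{\Ainf}\otimes\BdR\simeq\RG_{\mathrm{crys}}(X_\eta/\BdR)$ and degeneration of Hodge--de~Rham for proper smooth rigid spaces (Scholze, Conrad--Gabber). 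Hence the elementary divisors of $\varphi_M$ relative to $\xi$ form exactly the Hodge polygon $\Hdg_n(X_\eta)$. On the other side, $\varphi_M\otimes W$ is crystalline Frobenius, whose elementary divisors relative to $p$ define $\Hdg^{\mathrm{abs}}_n(X_k)$; here $\xi\mapsto p$ under $\Ainf\to W$.

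The third step is semicontinuity of elementary divisors. We choose bases and write $\varphi_M$ as a matrix $A\in M_b(\Ainf[1/\varphi(\xi)])$. After twisting by $\varphi(\xi)^N$ we may assume $A$ is integral. The Hodge polygon is governed by the $\varphi(\xi)$-adic valuations of the $r\times r$ minors of $A$, more precisely by their images in the DVR $\BdR$ after localizing at $\ker\theta\circ\varphi^{-1}$. The abstract Hodge polygon is governed by the $p$-adic valuations of the images of the same minors in $W$.

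For an element $f\in\Ainf$ divisible by $\varphi(\xi)^m$ in the $\BdR$ sense, the cleanest route is to use that $\Ainf$ is a $\varphi(\xi)$-adically separated domain. The key claim is: if $\varphi(\xi)^m$ divides $f$ after inverting elements not in the prime $(\varphi(\xi))$, then $\varphi(\xi)^m$ divides $uf$ for some $u\notin(\varphi(\xi))$. Mapping to $W$, the image of $u$ may be a unit times a power of $p$, which would go the wrong way. So instead of valuations I would compare minimal minor valuations, i.e.\ the quantities $\sum_{i\le r}$ of the smallest slopes. The inequality $\min v_p(\text{image of minors})\ge \min v_{\varphi(\xi)}(\text{minors})$ holds because $\varphi(\xi)^m\mid f$ in $\Ainf$ implies $p^m\mid \bar f$ in $W$. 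This gives that the Hodge polygon of $X_\eta$ lies on or below $\Hdg^{\mathrm{abs}}$ at every vertex.

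The main obstacle is this integrality: the $\varphi(\xi)$-adic content of the minors over $\BdR$ must be achieved in $\Ainf$ itself. I would handle it via Fargues' classification, or the BMS result that $M$ is free and that $M\otimes\BdR$ is the de Rham lattice, so that the minors' ideal in $\Ainf$ is generated up to units by a power of $\varphi(\xi)$ away from the closed point; freeness of $M$ supplies this.

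The endpoints agree because both polygons have total length $b$ and total height equal to the $\varphi(\xi)$-adic, respectively $p$-adic, valuation of $\det A$, and $\det A$ is a unit times $\varphi(\xi)^{N'}$. The full chain then follows by combining this with Theorem~\ref{thm:main} and Mazur's inequality $\operatorname{Newt}_n(X_k)\geqslant\Hdg^{\mathrm{abs}}_n(X_k)$.
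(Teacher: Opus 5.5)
Your outline is the paper's proof. You use a free Breuil--Kisin--Fargues module $M$ with a Frobenius-compatible identification $M\otimes_{\Ainf}W\cong\HH^n_{\cris}(X_k/W)$ from \cite[Theorem~14.5(iii)]{BMS}. There the first hypothesis makes $M$ free, and the second makes $\HH^{n+1}_{\Ainf}$ free, so the $\mathrm{Tor}_1$ term vanishes; your account of this step is garbled, but the citation is right. You then read the minors of a single Frobenius matrix $\varphi(\xi)$-adically in $\BdR$ and $p$-adically in $W$, and use the determinant for the endpoints. However, the two steps that carry the content are not established.

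\emph{Integrality.} You call the passage from divisibility in $\BdR$ to divisibility in $\Ainf$ the main obstacle, and then leave it open. Fargues' classification and freeness of $M$ are not what is needed. Switching to minimal minor valuations does not address the issue either; that is just the elementary-divisor formula, which you need anyway. The missing observation is elementary:
\begin{itemize}
\item Since $\BdR/\varphi(\xi)^m\cong\Ainf[1/p]/\varphi(\xi)^m$, a minor $f\in\varphi(\xi)^m\BdR$ satisfies $p^Nf\in\varphi(\xi)^m\Ainf$ for some $N$.
\item Since $\Ainf/\varphi(\xi)\cong\Oc$ (via $\theta\circ\varphi^{-1}$) is a domain, $\varphi(\xi)$ is a prime element of $\Ainf$, and it does not divide $p$.
\item Cancelling one factor of $\varphi(\xi)$ at a time \emph{inside} $\Ainf$ gives $f\in\varphi(\xi)^m\Ainf$, that is, $\varphi(\xi)^m\BdR\cap\Ainf=\varphi(\xi)^m\Ainf$.
\end{itemize}
Your auxiliary $u$ (here $p^N$) is removed before $\rho$ is applied, so its image in $W$ never enters. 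The same cancellation proves the endpoint fact you only assert, that $\det A$ is a unit times a power of $\varphi(\xi)$: since $A$ is invertible over $\Ainf[1/\varphi(\xi)]$, $\det A$ divides a power of $\varphi(\xi)$. No twist by $\varphi(\xi)^N$ is needed, because the linearized Frobenius $\varphi^*M\to M$ is already integral.

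\emph{Hodge numbers.} Your claim that the $\BdR$-elementary divisors are the Hodge numbers of $X_\eta$ does not follow from what you cite. The comparison theorems identify the two lattices with $\Xi=\HH^n_{\cris}(X_\eta/\BdR)$ and $\Lambda=\HH^n_{\mathrm{\acute et}}(X_\eta,\mathbb Q_p)\otimes_{\mathbb Q_p}\BdR$. Hodge--de~Rham degeneration gives the graded dimensions of the Hodge filtration on $\Xi/\varphi(\xi)\Xi\cong\HH^n_{\mathrm{dR}}(X_\eta/C)$. Nothing in your argument ties that filtration to the position of $\Lambda$ relative to $\Xi$. That is a filtered comparison statement. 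Classically it is the filtered de~Rham comparison over a discretely valued field, and the theorem deliberately does not assume descent to one.

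The paper obtains the needed statement (Lemma~\ref{lem:infinityhodge}) from the \'etale side instead. In its normalization $\BdR$ is $\xi$-adic, and Wu's theorem \cite[Theorem~1.1]{WuHT} identifies the filtration $\operatorname{im}(\xi^{-m}\Xi\cap\Lambda\to\Lambda/\xi\Lambda)$ with the Hodge--Tate filtration. Its graded pieces $\HH^{n-m}(X_\eta,\Omega^m)(-m)$ give exactly the multiplicities $h^{m,n-m}$. Some input of this kind must replace your appeal to Hodge--de~Rham degeneration.
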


The abstract Hodge numbers of \(X_k\) in degree \(n\) equal
both the Hodge numbers and the Hodge--Witt numbers of the canonical Mazur--Ogus subquotient 
of the diagonal cohomology of \(X_k\)
\cite[I, Theorem~4.5; IV, proof of Theorem~3.5]{Ekedahl}. 
Under the above torsion-freeness hypotheses, if the degree-$n$ diagonal cohomology of $X_k$ is already
Mazur--Ogus in the sense of \cite[I, Definition~4.3]{Ekedahl}, the two bounds coincide.
Consequently, the degree-$n$ Hodge numbers of every
proper smooth lifting are determined by $X_k$: they equal $h_W^{i,n-i}(X_k)$
(Corollary~\ref{cor:determinedhodge}).

We do not know whether the crystalline torsion-freeness hypotheses in
Theorem~\ref{thm:abstracthodge} can be removed. At the algebraic level, there are nonfree
Breuil--Kisin--Fargues modules whose crystalline specialization has an abstract Hodge polygon
lying strictly below their generic Hodge polygon (Remark~\ref{rem:nonfreebkf}). However, we have
neither a geometric counterexample nor a proof of the comparison without these hypotheses.

Our strategy is to compare the degree-$n$ cohomology lengths of quotients of the prismatic complex
by Andriopoulos's $r$-Nygaard filtration \cite[\S5]{Andriopoulos} and to study their asymptotic
behavior as $r$ tends to infinity. In characteristic $p$, we work directly with the quotient
$W\Omega^\bullet/\Nyg^{\geqslant a}_rW\Omega^\bullet$, where \(\Nyg^{\geqslant a}_rW\Omega^\bullet\)
corresponds exactly to Andriopoulos's $r$-Nygaard filtration in \cite[\S5]{Andriopoulos}. 
In mixed characteristic, we use \(\varphi^*(\Prism)/\Nyg^{\geqslant a}_r\varphi^*(\Prism)\), where 
\(\Prism=\Prism_{\mathfrak{X}/\Ainf}\coloneqq R\nu_* \mathcal{O}_{\Prism}\in D(\mathfrak{X}_{\mathrm{\acute{e}t}}, \Ainf)\)
is the prismatic complex in \cite[Construction 4.9]{BS} and \(\Nyg^{\geqslant a}_r\) is 
Andriopoulos's $r$-Nygaard construction in \cite[Theorem~1.1]{Andriopoulos}. 
In characteristic \(p\), the Hodge--Witt numbers govern the asymptotic growth of the corresponding
\(W(k)\)-cohomology lengths as \(r\) tends to infinity. In mixed characteristic, for
\(0\!\leqslant\!j\!\leqslant\!r\!-\!1\), the \(\varphi^{-j}(\xi)\)-adic lengths are determined
exactly by the Hodge numbers of the generic fiber. Comparing minors then controls these lengths
under specialization and yields the desired polygon comparisons. 

\subsection*{Conventions}

\begin{notation}
We adopt Ekedahl's convention in \cite[IV.3.1]{Ekedahl}; see also \cite[6.3]{IllusieFDK}.
For a proper smooth variety $X$ over a perfect field $k$, put $W=W(k)$. For a fixed
$n\geqslant0$, $m_{\lambda}$ is the multiplicity of the Newton slope $\lambda$ of
$\mathrm{H}^n_{\cris}(X/W)[1/p]$. The numbers
\(m^{i,n-i}\coloneqq \sum_{\lambda\in[i-1,i)}(\lambda-i+1)m_{\lambda}
+\sum_{\lambda\in[i,i+1)}(i+1-\lambda)m_{\lambda}\) are the slope numbers of $X$.
The domino number $T^{i,n-i}$ is the dimension of the domino attached to
$d:\mathrm{H}^{n-i}(X,W\Omega^i)\to \mathrm{H}^{n-i}(X,W\Omega^{i+1})$, and
\(h_W^{i,n-i}\coloneqq m^{i,n-i}+T^{i,n-i}-2T^{i-1,n-i+1}+T^{i-2,n-i+2}\)
are the Hodge--Witt numbers. All these sequences have finite support.
The $T^{i,n-i}$ are nonnegative, whereas the $h_W^{i,n-i}$ need not be.
\end{notation}

\begin{notation}
Let $\{u^i\}_{i\in\mathbb Z}$ be a finitely supported sequence of real numbers, with
$b\coloneqq\sum_i u^i\geqslant0$. For $a\in\mathbb Z$, set
\begin{equation}\label{eq:weightedcuts}
 S_a\{u^i\}=\sum_{l\leqslant a}(a-l)u^l.
\end{equation}
We define the polygon associated with $\{u^i\}$ to be the lower boundary of the closed
convex region
\[
 E\{u^i\}\coloneqq
 \bigcap_{a\in\mathbb Z}
 \bigl\{(x,y)\in\mathbb R^2:y\geqslant ax-S_a\{u^i\}\bigr\}.
\]
The defining inequalities automatically imply $0\leqslant x\leqslant b$.

When $u^i\geqslant0$ for all $i$, this is the usual polygon with slope $i$ of horizontal
length $u^i$. The definition also applies when some entries are negative.
Applied to $\{h^{i,n-i}\}$ or $\{h_W^{i,n-i}\}$, it defines the Hodge polygon and
Hodge--Witt polygon used here.
\end{notation}

\begin{lemma}\label{lem:support}
Let $\{u^i\}_{i\in\mathbb Z}$ and $\{v^i\}_{i\in\mathbb Z}$ be two finitely supported
sequences of real numbers with $\sum_i u^i=\sum_i v^i=b\geqslant0$.
If $S_a\{u^i\}\leqslant S_a\{v^i\}$ for every $a\in\mathbb Z$, then the polygon of
$\{u^i\}$ lies on or above the polygon of $\{v^i\}$. If moreover $u^i\geqslant0$ for all
$i$, the converse holds.
\end{lemma}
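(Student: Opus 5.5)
The forward direction is immediate from the definition of the polygon as the lower boundary of an intersection of half-planes. We write $E_u=E\{u^i\}$ and $E_v=E\{v^i\}$. If $S_a\{u^i\}\leqslant S_a\{v^i\}$ for all $a$, then each defining half-plane $\{y\geqslant ax-S_a\{u^i\}\}$ of $E_u$ is contained in the corresponding half-plane $\{y\geqslant ax-S_a\{v^i\}\}$ of $E_v$. Hence $E_u\subseteq E_v$. Both regions are closed, convex, and upward-closed in $y$, with $x$-projection contained in $[0,b]$. Their lower boundary functions therefore satisfy $P_u(x)=\inf\{y:(x,y)\in E_u\}\geqslant P_v(x)$ wherever both are finite. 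That is, the polygon of $\{u^i\}$ lies on or above that of $\{v^i\}$.

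Before doing so, I will check that the $x$-projection is exactly $[0,b]$ and that the boundaries are finite there. For $a\ll0$ we have $S_a=0$, which gives $y\geqslant ax$ with $a\to-\infty$ and forces $x\geqslant0$. For $a\gg0$ we have $S_a=ab-\sum_l l u^l$, which gives $y\geqslant a(x-b)+\sum l u^l$ and forces $x\leqslant b$. Only finitely many $a$ then impose nontrivial constraints on a compact range of $x$, so $P$ is a finite maximum of affine functions on $[0,b]$. The region is nonempty, since the points are bounded below, and the same reasoning applies to $v$, because $\sum v^i=b$ too.

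For the converse, assume $u^i\geqslant0$ and $P_u\geqslant P_v$ on $[0,b]$. The key claim is that for nonnegative $u$ the support function recovers $S_a$ exactly:
\[
S_a\{u^i\}=\max_{x\in[0,b]}\bigl(ax-P_u(x)\bigr).
\]
When $u\geqslant0$, $P_u$ is the usual convex polygon with slope $l$ on a segment of length $u^l$. Set $x_a=\sum_{l\leqslant a}u^l$, which is the breakpoint where the slope passes $a$. Then
\[
P_u(x_a)=\sum_{l\leqslant a}l u^l,
\]
so
\[
ax_a-P_u(x_a)=\sum_{l\leqslant a}(a-l)u^l=S_a,
\]
and the maximum of $ax-P_u(x)$ is attained there by convexity. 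To make this rigorous I must confirm that the half-plane description reproduces the usual polygon when $u\geqslant0$. I would do this by noting that the usual polygon is convex with slopes in $\mathbb Z$, so it is the maximum of its supporting lines of integer slope $a$, and these lines are exactly $y=ax-S_a$.

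For general $v$, the definition gives $P_v(x)\geqslant ax-S_a\{v^i\}$ for every $a$, so $\max_x(ax-P_v(x))\leqslant S_a\{v^i\}$. Combining this with the claim,
\[
S_a\{u^i\}=\max_x\bigl(ax-P_u(x)\bigr)\leqslant\max_x\bigl(ax-P_v(x)\bigr)\leqslant S_a\{v^i\}.
\]
The only step needing care is the identification of $P_u$ with the usual polygon, together with attainment of $S_a$ by it, in the nonnegative case. For $v$ no such identification is needed, because only the trivial inequality is used. This asymmetry explains why the hypothesis $u^i\geqslant0$ appears only in the converse.
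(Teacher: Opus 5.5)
Your proposal is correct and follows essentially the same route as the paper. The forward direction compares the defining half-planes to get $E\{u^i\}\subseteq E\{v^i\}$, and the converse uses that for $u^i\geqslant0$ the supporting line of each integral slope $a$ touches the polygon (at your breakpoint $x_a=\sum_{l\leqslant a}u^l$), so $\inf_{E\{u^i\}}(y-ax)=-S_a\{u^i\}$, which is compared with the trivial bound $\inf_{E\{v^i\}}(y-ax)\geqslant -S_a\{v^i\}$; your checks that both regions project onto exactly $[0,b]$ and that the half-plane description recovers the usual polygon when $u^i\geqslant0$ just make explicit what the paper leaves implicit.
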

\begin{proof}
The inequalities imply $E\{u^i\}\subseteq E\{v^i\}$ by comparing the defining halfspaces. Conversely, since
$u^i\geqslant0$, the supporting line of each integral slope $a$ meets the polygon of $\{u^i\}$, possibly
at an endpoint. Thus
\(\inf_{(x,y)\in E\{u^i\}}(y-ax)=-S_a\{u^i\}.\)
If $E\{u^i\}\subseteq E\{v^i\}$, this infimum is at least $-S_a\{v^i\}$, proving the converse. The entries of
$\{v^i\}$ may have either sign.
\end{proof}

\begin{lemma}\label{lem:telescope}\label{eq:hinge}
For every $a\in\mathbb Z$,
\begin{equation}\label{eq:hingem}
 S_a\{m^{i,n-i}\}=\sum_{\lambda\leqslant a}(a-\lambda)m_{\lambda},\qquad S_a\{h_W^{i,n-i}\}=\sum_{\lambda\leqslant a}(a-\lambda)m_{\lambda}+T^{a-1,n-a+1}.
\end{equation}
\end{lemma}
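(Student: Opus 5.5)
We prove both identities by direct computation from the definitions in the Conventions. The formula for $S_a\{h_W\}$ will then follow from the one for $S_a\{m\}$ together with a second-difference telescoping of the domino terms.

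\textbf{Slope numbers.} Write $m^{i,n-i}=\sum_\lambda \phi_i(\lambda)m_\lambda$, where $\phi_i$ is the hat function supported on $[i-1,i+1)$:
\[
\phi_i(\lambda)=\lambda-i+1 \ \text{on } [i-1,i), \qquad \phi_i(\lambda)=i+1-\lambda \ \text{on } [i,i+1).
\]
Exchanging the finite sums gives
\[
S_a\{m\}=\sum_\lambda m_\lambda \sum_{l\leqslant a}(a-l)\phi_l(\lambda),
\]
so it suffices to show that $g_a(\lambda)\coloneqq\sum_{l\leqslant a}(a-l)\phi_l(\lambda)$ equals $\max(a-\lambda,0)$. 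Fix $\lambda$ and write $\lambda=i+t$ with $i\in\mathbb Z$ and $t\in[0,1)$. Only $l=i$ and $l=i+1$ contribute, with $\phi_i(\lambda)=1-t$ and $\phi_{i+1}(\lambda)=t$. We split into cases:
\begin{itemize}
\item If $i+1\leqslant a$, then $g_a(\lambda)=(a-i)(1-t)+(a-i-1)t=a-\lambda$.
\item If $i=a$, only $l=a$ contributes, with coefficient $0$, so $g_a(\lambda)=0$.
\item If $i>a$, no term contributes, so $g_a(\lambda)=0$.
\end{itemize}
Since $\lambda\leqslant a$ forces $i\leqslant a$, and in the case $i=a$ we have $\lambda=a$ exactly when $t=0$, we get $g_a(\lambda)=(a-\lambda)^+$. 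This yields the first formula. This is just the statement that piecewise-linear interpolation at integers reproduces the function $(a-\lambda)^+$, whose only kink is at the integer $a$.

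\textbf{Domino terms.} By additivity it remains to show, with $T_i\coloneqq T^{i,n-i}$ (finitely supported, and zero for $i<0$), that
\[
\sum_{l\leqslant a}(a-l)\bigl(T_l-2T_{l-1}+T_{l-2}\bigr)=T_{a-1}.
\]
For each fixed $j$, collect the coefficient of $T_j$:
\[
(a-j)\,[j\leqslant a]\;-\;2(a-j-1)\,[j\leqslant a-1]\;+\;(a-j-2)\,[j\leqslant a-2].
\]
We evaluate this case by case:
\begin{itemize}
\item For $j\leqslant a-2$, it is $(a-j)-2(a-j-1)+(a-j-2)=0$.
\item For $j=a-1$, it is $1-0=1$.
\item For $j=a$, it is $0$.
\end{itemize}
Hence the sum is $T_{a-1}$. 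Both sums are finite because all sequences have finite support and $T_j=0$ for $j<0$, so the rearrangement is legitimate.

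\textbf{Main obstacle.} The only delicate point is the boundary bookkeeping: the half-open intervals in the definition of $m^{i,n-i}$, and the indices $j=a-1,a$ in the telescoping. The case analysis above handles both, so no genuine obstacle is expected.
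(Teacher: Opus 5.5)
Your proof is correct and follows the paper's argument. You show that each Newton slope contributes $\max(a-\lambda,0)$, which you verify explicitly by the hat-function interpolation, and that the coefficient of $T^{j,n-j}$ is the second difference $\max(a-j,0)-2\max(a-j-1,0)+\max(a-j-2,0)$, which equals $1$ exactly at $j=a-1$ (you leave implicit the trivial case $j>a$, where all three terms vanish).
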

\begin{proof}
A Newton slope $\lambda$ contributes $\max(a-\lambda,0)$ to the first sum. In the second sum,
the coefficient of $T^{j,n-j}$ is \(\max(a-j,0)-2\max(a-j-1,0)+\max(a-j-2,0)\), which equals \(1\) if \(j=a-1\) and \(0\) otherwise. 
\end{proof}

Let \(b_n\) be the \(n\)-th Betti number; put \(t_{N,n}=\sum_\lambda\lambda m_\lambda.\)
A calculation gives
\(\sum_i h_W^{i,n-i}=b_n\) and \(\sum_i i h_W^{i,n-i}=t_{N,n}.\)
The domino terms cancel in both sums. Hence the Hodge--Witt polygon 
has the same endpoint as the Newton polygon. In
particular, it is a single point when $b_n=0$.

\begin{notation}
  In this article, we let \[\ell(M)=\ell_W(M)\] be the length of a \(W\)-module. 
  For a \(W\)-complex \(P\), we write \[\ell^n(P)=\ell^n_W(P)\coloneqq \ell_W(\HH^n(P))\] for the
length of the \(n\)-th cohomology of \(P\). In general, for a discrete 
  valuation ring \(V\) and a \(V\)-module \(M\) or a \(V\)-complex \(P\), we let 
  \(\ell_V(M)\) be the length of \(M\) and \(\ell_V^n(P)\) be the length of the \(n\)-th cohomology of \(P\).
\end{notation}

\section{Specialization in equal characteristic $p$}\label{sec:equalchar}

We prove Theorem~\ref{thm:equalchar} using the de~Rham--Witt complex over a perfect base.
The proof compares the lengths of finite Nygaard quotients and then identifies their leading terms
by Ekedahl's theory of coherent Raynaud complexes.

Let $S=\operatorname{Spec}(B)$, where $B$ is a perfect $k$-algebra, and let $f:X\to S$ be proper
and smooth. Let $W(B)$ be its ring of Witt vectors, and write $\sigma$ for its Witt Frobenius. 
Illusie's de~Rham--Witt theory applies over the perfect base $S$:
$W\Omega^\bullet_{X/S}$ is a natural $W(B)$-linear complex of sheaves on the small \'etale site of $X$ \cite[I, 1.12--1.15 and \S3]{IllusieDRWCrystalline}. Its operators
$p,F,V$ are injective \cite[Proposition~2.1]{IllusieDRW}. Let $\sigma_*$ denote restriction of scalars along $\sigma$; modulo $p$, we use the same
notation for Frobenius on $B$.

\begin{notation}
For $r\geqslant1$ and $a\geqslant0$, let us define a subcomplex of \(W\Omega^\bullet_{X/S}\), 
which we will see corresponds exactly to the $r$-Nygaard filtration 
of Andriopoulos \cite[\S5]{Andriopoulos}:
\begin{equation}\label{eq:rthnygaard}
 (\Nyg^{\geqslant a}_rW\Omega^\bullet_{X/S})^i=
 \begin{cases}
 p^{r(a-i-1)}V^rW\Omega^i_{X/S}&i<a,\\
 W\Omega^i_{X/S}&i\geqslant a.
 \end{cases}
\end{equation}
The relation $V^rd=p^rdV^r$ shows that \(\Nyg^{\geqslant a}_rW\Omega^\bullet_{X/S}\) is indeed 
a subcomplex of \(W\Omega^\bullet_{X/S}\).
 Let us use \(\iota: \Nyg^{\geqslant a}_rW\Omega^\bullet_{X/S}\to W\Omega^\bullet_{X/S}\) to denote the inclusion. 
For $r=1$, this is exactly the usual Nygaard filtration; see \cite[Definition 8.1]{BMS2}. 
Although \cite[\S8.1]{BMS2} treats smooth algebras over a perfect field, the definition and
the proof of Lemma~8.2 apply over any perfect base. 
From now on we abbreviate \(\Nyg^{\geqslant a}_rW\Omega^\bullet_{X/S}\) as \(\Nyg^{\geqslant a}_r\)
when there is no ambiguity. We also write \(\Nyg\) for \(W\Omega^\bullet_{X/S}\) when convenient. 

By the standard formula \(dF=pFd\), the map \(p^iF: W\Omega^i_{X/S}\to W\Omega^i_{X/S}\) 
defines a map of complexes which we call the crystalline Frobenius
$\Phi:W\Omega^\bullet_{X/S}\to\sigma_*W\Omega^\bullet_{X/S}$. 
The divided Frobenius is thus defined as the map of complexes 
\[\Phi/p^a:\Nyg^{\geqslant a}_1\to
\sigma_*W\Omega^\bullet_{X/S}.\]
\end{notation}

The graded pieces for $r=1$ admit the functorial quasi-isomorphisms
\begin{equation}\label{eq:wittnygaardgraded}
 \gr^a_1 W\Omega^\bullet_{X/S}
 :=\Nyg^{\geqslant a}_1/\Nyg^{\geqslant a+1}_1
 \simeq\tau_{\leqslant a}\bigl(\sigma_*\Omega^\bullet_{X/S  }\bigr),
 \qquad a \geqslant0,
\end{equation}
where $\tau_{\leqslant a}$ is canonical truncation of the complex of sheaves; see \cite[Lemma 8.2]{BMS2}.

\begin{proposition}\label{rth-nygaard}
  The $r$-Nygaard filtration \(\Nyg^{\geqslant a}_r\) can be interpreted as 
  \begin{equation}\label{eq:Nygar}
 \Nyg^{\geqslant a}_r=\Nyg^{\geqslant a}_1 
       \times^{h}_{\sigma_*\Nyg}\left(\sigma_*\Nyg^{\geqslant a}_1\right)
       \times^{h}_{\sigma_*^2\Nyg}\cdots
       \times^{h}_{\sigma_*^{r-1}\Nyg}\left(\sigma_*^{r-1}\Nyg^{\geqslant a}_1\right).
\end{equation}
Here, at the $j$-th link, for $1\leqslant j\leqslant r-1$, the two maps to $\sigma_*^j\Nyg$ are the divided Frobenius
$\sigma_*^{j-1}(\Phi/p^a)$ from the left and the inclusion $\sigma_*^{j}(\iota)$ from the right.
\end{proposition}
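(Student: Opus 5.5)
We will prove the identification degreewise, by showing that the iterated homotopy fiber product can be computed as a strict fiber product of complexes of sheaves. This strict fiber product will then be identified with the explicit subcomplex \eqref{eq:rthnygaard}.

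\emph{Step 1: strictness.} We first show that each homotopy pullback is a strict one. The right-hand maps $\sigma_*^j(\iota)$ are termwise injective, because $V$ and $p$ are injective. The left-hand maps $\Phi/p^a$ are termwise maps of sheaves between complexes in the same abelian category. Hence in each link the map $A\oplus B\to \sigma^j_*\Nyg$, $(x,y)\mapsto \Phi/p^a(x)-y$, has cokernel equal to $\coker(\sigma_*^j\iota)$. The homotopy pullback is the shifted cone of this map. It agrees with the strict pullback $A\times_{\sigma_*^j\Nyg}B$ provided the difference map is termwise surjective. It is not, so instead we use the criterion that one leg of the square is a termwise monomorphism. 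In any abelian category, a strict pullback along a monomorphism agrees with the homotopy pullback exactly when the induced map on cokernels is injective. That condition holds termwise here, since the strict pullback is just the preimage of $B$ under $\Phi/p^a$.

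To make this rigorous, I would instead use the equivalent formulation that $\operatorname{fib}(A\times^h_C B\to A)\simeq \operatorname{fib}(B\to C)$. For the strict pullback $P$ along the monomorphism $B\hookrightarrow C$, the sequence $0\to P\to A\to C/B$ is exact. The homotopy pullback is $\operatorname{fib}(A\to C/B)$, so the two agree as long as $A\to C/B$ is termwise surjective. Therefore the real point to check is that $\Phi/p^a:\Nyg^{\geqslant a}_1\to \sigma_*\Nyg/\sigma_*\Nyg^{\geqslant a}_1$ is termwise surjective. Iterating the argument, at the later links the relevant source is the previously built subcomplex.

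\emph{Step 2: compute termwise.} In degree $i\geqslant a$, every factor is all of $W\Omega^i$, so the pullback is $W\Omega^i$ and the cokernels vanish. In degree $i<a$, the first factor is $p^{a-i-1}VW\Omega^i$ and the divided Frobenius acts by $p^{i-a}F$. Using $FV=p$, the element $x=p^{a-i-1}Vy$ maps to $p^{i-a}\cdot p^{a-i-1}\cdot p\,y=y$. So $\Phi/p^a$ is a bijection from $\Nyg^{\geqslant a,i}_1$ onto $W\Omega^i$, and surjectivity is automatic. Its inverse is $y\mapsto p^{a-i-1}Vy$. The condition imposed at the next link, namely $y\in p^{a-i-1}VW\Omega^i$, therefore gives $x\in p^{2(a-i-1)}V^2W\Omega^i$. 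By induction on $r$, the $r$-fold strict fiber product in degree $i$ is $p^{r(a-i-1)}V^rW\Omega^i$, where we use that $p$ commutes with $V$. This is exactly \eqref{eq:rthnygaard}, and the projection to the first factor is the inclusion into $W\Omega^\bullet$.

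\emph{Step 3: associativity.} Finally, we check that the iterated pullback is well defined: we bracket it from the right and use the fact that the maps in each link are morphisms of complexes. I expect the main obstacle to be Step 1. It requires care in justifying that the strict pullback represents the homotopy pullback in the derived category of sheaves. The termwise surjectivity computed in Step 2, which follows from the bijectivity of $p^{i-a}F$ on $p^{a-i-1}VW\Omega^i$ for $i<a$, is precisely what resolves this.
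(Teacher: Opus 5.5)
Your argument is correct and is essentially the paper's proof. Both replace the homotopy pullbacks by strict ones using termwise surjectivity, which comes from the bijection $\Phi/p^a\colon p^{a-i-1}VW\Omega^i\to W\Omega^i$ (via $FV=p$) for $i<a$ and from $\iota$ being the identity for $i\geqslant a$; both then iterate $x_\ell=p^{a-i-1}Vx_{\ell+1}$ to obtain $p^{r(a-i-1)}V^rW\Omega^i$.

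One correction to Step~1: your claim that the difference map $(x,y)\mapsto\Phi/p^a(x)-y$ is not termwise surjective is false, and the ``injective on cokernels'' criterion you state is not the right test. Surjectivity of the difference map is equivalent to surjectivity of $A\to C/B$, which you go on to prove, and it is exactly the criterion the paper uses.
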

See also \cite[Theorem~1.1 and Definition~5.1]{Andriopoulos}.
\begin{proof}
Frobenius twisting preserves finite homotopy limits, and each pair of matching maps in
(\ref{eq:Nygar}) is a twist of the classical divided Frobenius and the Nygaard inclusion. For $i<a$, the divided map \(\Phi/p^a: p^{a-i-1}VW\Omega^i_{X/S}\rightarrow W\Omega^i_{X/S}\) is surjective, since
\(\Phi/p^a(p^{a-i-1}Vy)=p^{i-a}F(p^{a-i-1}Vy)=y\); for $i\geqslant a$, the matching inclusion is the identity. The difference map defining the homotopy
pullback is therefore termwise surjective, so the strict pullback represents it. 
Suppose in degree \(i<a\) a local section on the right hand side of (\ref{eq:Nygar}) is represented by \((x_0,x_1,...,x_{r-1})\). 
Then the homotopy pullback condition gives \(x_{\ell}=p^{a-i-1}Vx_{\ell+1}\) for all $0\leqslant \ell\leqslant r-2$. 
Iterating this relation gives \(x_0=p^{(r-1)(a-i-1)}V^{r-1}x_{r-1}\). Since \(x_{r-1}\) is itself locally of the form \(p^{a-i-1}Vy\), it follows that \(x_0\) is locally
of the form \(p^{r(a-i-1)}V^{r}y\). For $i\geqslant a$, the first coordinate is
arbitrary and determines the remaining coordinates. Hence projection to the first factor identifies
the iterated homotopy pullback with the subcomplex in \eqref{eq:rthnygaard}, giving (\ref{eq:Nygar}). 
\end{proof}

Proposition \ref{rth-nygaard} shows that on the canonical representative \(W\Omega^{\bullet}_{X/S}\) of \(\varphi^*\Prism_{X/S}\), 
Andriopoulos's \(r\)-th Nygaard filtration \(\Nyg^{\geqslant a}_r\varphi^*\Prism_{X/S}\) is exactly represented by
\(\Nyg^{\geqslant a}_rW\Omega^{\bullet}_{X/S}\). But for this section, 
working directly with the de Rham--Witt complex and its Nygaard filtration is sufficient for our purposes.

For $a,r\geqslant1$, we set
\begin{equation}\label{calQcalP}
   \calQ^a_{r,X/S}=W\Omega^\bullet_{X/S}/\Nyg^{\geqslant a}_rW\Omega^\bullet_{X/S},\qquad
  \calP^a_{r,X/S}=\RG(X,\calQ^a_{r,X/S}).
\end{equation}

\begin{lemma}\label{lem:wittfiltration}
For $r,\ell\geqslant1$, there is a short exact sequence of complexes
\[
 0\longrightarrow\sigma_*^r\calQ^a_{\ell,X/S}
 \longrightarrow\calQ^a_{r+\ell,X/S}
 \longrightarrow\calQ^a_{r,X/S}\longrightarrow0.
\]
Thus $\calQ^a_{r,X/S}$ has a finite filtration with factors
$\sigma_*^\ell\calQ^a_{1,X/S}$, $0\leqslant \ell<r$.
\end{lemma}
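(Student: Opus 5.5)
The exact sequence should come from the inclusion $\Nyg^{\geqslant a}_{r+\ell}\subseteq\Nyg^{\geqslant a}_r$ inside $W\Omega^\bullet_{X/S}$. It gives a surjection $\calQ^a_{r+\ell}\to\calQ^a_r$ with kernel $\Nyg^{\geqslant a}_r/\Nyg^{\geqslant a}_{r+\ell}$. The plan is to identify this kernel with $\sigma_*^r\calQ^a_\ell$ via an explicit map built from $V^r$ and powers of $p$, checked degree by degree using \eqref{eq:rthnygaard}.

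In degrees $i\geqslant a$ both filtration terms equal $W\Omega^i$, so the kernel vanishes, as does $(\calQ^a_\ell)^i$. In degrees $i<a$ the kernel is
\[
p^{r(a-i-1)}V^rW\Omega^i\big/p^{(r+\ell)(a-i-1)}V^{r+\ell}W\Omega^i .
\]
I will define
\[
\psi^i=p^{r(a-i-1)}V^r:\sigma_*^rW\Omega^i\to W\Omega^i .
\]
This map is $W(B)$-linear after twisting by $\sigma^r$, and it is injective because $p$ and $V$ are injective. Its image is $(\Nyg^{\geqslant a}_r)^i$. Since $p$ commutes with $V$, it carries $p^{\ell(a-i-1)}V^\ell W\Omega^i$ onto $p^{(r+\ell)(a-i-1)}V^{r+\ell}W\Omega^i$. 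Because $\psi^i$ is injective, preimages correspond to images, so $\psi^i$ induces an isomorphism from $\sigma_*^r(\calQ^a_\ell)^i$ onto the kernel in degree $i$.

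The main point to check is that the $\psi^i$ form a map of complexes, since the differentials of $\calQ^a_\ell$ and $W\Omega^\bullet$ differ. For $i+1<a$ I will use $dV^r=p^{-r}V^rd$, which is equivalent to the relation $V^rd=p^rdV^r$ quoted in the paper. This gives
\[
d\,\psi^i(x)=p^{r(a-i-1)}p^{-r}V^r dx=p^{r(a-i-2)}V^rdx=\psi^{i+1}(dx).
\]
For $i=a-1$ the target degree is $a$, where both $(\calQ^a_\ell)^a$ and the kernel vanish, so compatibility is automatic. This sign and exponent bookkeeping is the only real obstacle, and the exponents $r(a-i-1)$ are chosen precisely so that it works.

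For the filtration statement I will induct on $r$. Applying the sequence with $(r,\ell)$ replaced by $(1,r-1)$ gives
\[
0\to\sigma_*\calQ^a_{r-1}\to\calQ^a_r\to\calQ^a_1\to0 .
\]
The functor $\sigma_*$ is exact and preserves filtrations. By induction $\sigma_*\calQ^a_{r-1}$ has a finite filtration with graded pieces $\sigma_*^{\ell}\calQ^a_1$ for $1\leqslant\ell<r$. Adding $\calQ^a_1$ as the top quotient gives the claimed filtration of $\calQ^a_r$.
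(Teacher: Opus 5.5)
Your proof is correct and is essentially the paper's argument. The paper identifies $\Nyg^{\geqslant a}_r/\Nyg^{\geqslant a}_{r+\ell}$ with $\sigma_*^r\calQ^a_{\ell,X/S}$ using the divided Frobenius $\Phi^r/p^{ra}$, which sends $p^{r(a-i-1)}V^ry$ to $y$ and is therefore exactly the inverse of your $\psi$; its map commutes with $d$ automatically because it is built from $\Phi$, whereas you check compatibility by hand from $V^rd=p^rdV^r$, which is valid because $a-i-1\geqslant1$ whenever $i+1<a$.
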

\begin{proof}
The divided Frobenius $\Phi^r/p^{ra}$ composed with the quotient map induces a map
\[
 \Nyg^{\geqslant a}_rW\Omega^\bullet_{X/S}
 \rightarrow \sigma_*^rW\Omega^\bullet_{X/S}\rightarrow \sigma_*^r\calQ^a_{\ell,X/S}.
\]
In degree $i<a$, it sends $p^{r(a-i-1)}V^ry$ to the class of $y$; in degrees $i\geqslant a$, the
target vanishes. It is therefore surjective, with kernel
$\Nyg^{\geqslant a}_{r+\ell}W\Omega^\bullet_{X/S}$. Taking the corresponding quotients gives
the asserted exact sequence. 
\end{proof}

\begin{lemma}[Perfectness and base change]\label{lem:wittperfectness}
For every $a,r\geqslant1$, the complex $\calP^a_{r,X/S}$ is a perfect $W(B)$ complex and satisfies
$\calP^a_{r,X/S}[1/p]=0$. For every homomorphism $B\to B'$ of perfect $k$-algebras, put
$S'=\operatorname{Spec}(B')$ and $X'=X\times_S S'$. The natural base-change map is an equivalence
\begin{equation}\label{eq:equalcharbasechange}
 \calP^a_{r,X/S}\otL_{W(B)}W(B')\ \xrightarrow{\ \sim\ }\ \calP^a_{r,X'/S'}.
\end{equation}
\end{lemma}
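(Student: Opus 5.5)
By Lemma~\ref{lem:wittfiltration}, $\calQ^a_{r,X/S}$ has a finite filtration with graded pieces $\sigma_*^\ell\calQ^a_{1,X/S}$ for $0\leqslant\ell<r$. Perfectness, $p$-power torsion and base change are each stable under extensions, and under restriction of scalars along the automorphism $\sigma$ of $W(B)$ (for base change, use that $\sigma$ commutes with $W(B)\to W(B')$). So I will reduce all three assertions to the case $r=1$ by induction on $r$, applying the exact sequence of Lemma~\ref{lem:wittfiltration} with $\ell=1$.

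For $r=1$, the quotient $\calQ^a_{1,X/S}=W\Omega^\bullet/\Nyg^{\geqslant a}_1$ has a finite filtration by the Nygaard pieces $\Nyg^{\geqslant b}_1/\Nyg^{\geqslant b+1}_1$ for $0\leqslant b<a$. By \eqref{eq:wittnygaardgraded}, these are quasi-isomorphic to $\tau_{\leqslant b}(\sigma_*\Omega^\bullet_{X/S})$. It therefore suffices to treat $\RG(X,\tau_{\leqslant b}\Omega^\bullet_{X/S})$, viewed as a $W(B)$-complex through $W(B)\to B$ (twisted by $\sigma$).

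\textbf{$p$-torsion.} These complexes are killed by $p$, so after inverting $p$ they vanish. By the dévissage this gives $\calP^a_{r,X/S}[1/p]=0$.

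\textbf{Perfectness.} I will check that each piece is a perfect $B$-complex. Via the Cartier isomorphism, the canonical truncation $\tau_{\leqslant b}$ of $F_{X/S*}\Omega^\bullet$ has a finite filtration whose graded pieces are $\Omega^i_{X^{(p)}/S}[-i]$. These are locally free of finite rank on the proper smooth $X^{(p)}$, since over a perfect base $X^{(p)}\cong X$ up to twisting by the Frobenius of $B$. Hence $\RG$ of each is perfect over $B$. Perfect complexes over $B=W(B)/p$ are perfect over $W(B)$, because $B$ has the two-term resolution $W(B)\xrightarrow{p}W(B)$, $p$ being a nonzerodivisor in $W(B)$ for perfect $B$.

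\textbf{Base change.} The cohomology of coherent locally free sheaves on proper flat $X$ commutes with derived base change. For the $B$-module pieces, I will use that $B\otL_{W(B)}W(B')\simeq B'$: this holds because $p$ is a nonzerodivisor in both Witt rings and $W(B')/p=B'$. It is also compatible with $X'^{(p)}=X^{(p)}\times_S S'$. The Cartier-based filtrations are functorial, so they are compatible with the natural base-change map. Hence that map is an equivalence on graded pieces, and therefore on the whole complex.

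\textbf{Main obstacle.} The hard part is verifying that these filtrations are functorial and match the natural map \eqref{eq:equalcharbasechange} at the level of $W\Omega^\bullet$. That map comes from the functoriality $W\Omega^\bullet_{X/S}\to W\Omega^\bullet_{X'/S'}$, which preserves $V$, $p$ and hence $\Nyg^{\geqslant a}_r$. I must check that the quasi-isomorphism \eqref{eq:wittnygaardgraded} is compatible with base change. I will take this from its functoriality in \cite[Lemma~8.2]{BMS2}, together with the base-change compatibility of the Cartier isomorphism over perfect bases.
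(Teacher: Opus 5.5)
Your proposal is correct and follows the paper's proof essentially step for step. The steps are the same: reduce to $r=1$ via Lemma~\ref{lem:wittfiltration}, using that $\sigma_*$ commutes with base change. Then filter by the Nygaard graded pieces and their Cartier/Postnikov filtrations, transfer perfectness from $B$ to $W(B)$ because $p$ is a nonzerodivisor, and get base change from $B\otL_{W(B)}W(B')\simeq B'$ together with functoriality of the graded comparison. The only minor difference is the vanishing $\calP^a_{r,X/S}[1/p]=0$. The paper gets it directly from the fact that $p^{ra}$ kills $\calQ^a_{r,X/S}$, since $p^r=V^rF^r$, whereas you get it by d\'evissage from the $p$-torsion graded pieces; both arguments work.
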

\begin{proof}
For $r=1$, the finite Nygaard filtration on $\calQ^a_{1,X/S}$ has graded pieces
\eqref{eq:wittnygaardgraded}, with $0\leqslant b<a$. By the Cartier isomorphism, their
Postnikov filtrations have successive quotients $\Omega^i_{X/S}[-i]$, with $0\leqslant i\leqslant b$. Since $X/S$ is proper and
smooth, each $\RG(X,\Omega^i_{X/S})$ is perfect over $B$ and satisfies arbitrary coherent derived
base change \cite[Tag~0DJT]{Stacks}. Since $B=W(B)/p$ and $p$ is a nonzerodivisor in $W(B)$, these
complexes are also perfect over $W(B)$. The two finite filtrations therefore show that $\calP^a_{1,X/S}$ is
perfect over $W(B)$.

For base change, since $B'$ is perfect, $p$ is also a nonzerodivisor in $W(B')$, so
\(B\otL_{W(B)} W(B')\simeq[W(B')\xrightarrow{p}W(B')]\simeq B'.\)
Consequently, coherent derived base change gives
\[
 \RG(X,\Omega^i_{X/S})\otL_{W(B)} W(B')
 \simeq\RG(X,\Omega^i_{X/S})\otL_B B' 
 \simeq\RG(X',\Omega^i_{X'/S'}).
\]
The graded comparison and Cartier isomorphisms are functorial, so these equivalences identify the
base-change map on every piece of the two filtrations. This proves
\eqref{eq:equalcharbasechange} for $r=1$. Lemma~\ref{lem:wittfiltration} gives perfectness and
base change for every $r$, since Frobenius restriction commutes with base change along maps of
perfect rings. Finally, $p^{ra}$ annihilates $\calQ^a_{r,X/S}$ by $p^r=V^rF^r$, so
$\calP^a_{r,X/S}[1/p]=0$.
\end{proof}

\subsection{Length growth over a perfect field}\label{subsec:asymptotic}

We now identify the leading terms of the fiber lengths. The same calculation will also be used
in the mixed-characteristic comparison. 

\begin{notation}
Fix a perfect field $k$, put $W=W(k)$, and let $R$ be the
Raynaud ring with Witt Frobenius $\sigma$ \cite[I, \S1.1]{IR}. Its relevant relations include
\[
 FV=VF=p,\qquad FdV=d,\qquad Vd=pdV,\qquad d^2=0.
\]
We adopt the notation used in \cite{Ekedahl} for left graded $R$-modules and \(R\)-complexes. 
In particular, let \(D(R)\) denote the derived category of left graded \(R\)-modules and
\(D_c^b(R)\) its full subcategory of bounded coherent Raynaud complexes; see \cite[I, Remark~3.10.1]{IR} and
\cite[Definition~0.5.1]{Ekedahl}.
For a proper smooth variety \(X/k\), its de Rham--Witt complex 
\(W\Omega^\bullet_{X/k}\) is a sheaf of left graded \(R\)-modules on the small \'etale site of \(X\).  
Its derived global sections form the Hodge--Witt cohomology complex
\(\RG(X,W\Omega^\bullet_{X/k})\in D(R)\). 
It is a theorem of Illusie and Raynaud that 
\(\RG(X,W\Omega^\bullet_{X/k})\) always lies in \(D_c^b(R)\) \cite[II, Theorem~2.2]{IR}. 

We recall that there is a well-defined totalization functor \(\Tot:D_c^b(R)\to D(W)\) 
which is exact and takes the Hodge--Witt cohomology of \(X/k\) to its crystalline cohomology.

Let $M\in D_c^b(R)$ be a bounded coherent Raynaud complex. Internal degree is denoted by $i$,
external cochain degree by $b$, and total degree by $i+b$. 
We may represent $M$ by a complex bounded in both degrees \cite[Proposition~0.5.2]{Ekedahl}. 
\end{notation}

\begin{notation}
Let \(M\in D(R)\). 
Let $\delta_a:\mathbb{Z}\to\mathbb{Z}$ be the characteristic function of $\{a\}$. The Nygaard modification \cite[Chapter~0, \S2]{Ekedahl}
\[
 M(-r\delta_a)
\]
is defined by
\begin{equation}\label{eq:abstractN}
 (M(-r\delta_a))^i=
 \begin{cases}\sigma_*^r M^i&i<a,\\M^i&i\geqslant a,\end{cases}
\end{equation}
where the operators $F,V$ are induced from $M$, and the internal differential is $d$ except from degree $a-1$
to degree $a$, where it is $dV^r$; the external differential is unchanged. Here $c\in W$ acts on
$\sigma_*^rM$ as $\sigma^r(c)$, so $V^r:\sigma_*^rM\to M$ is $W$-linear. The Raynaud relations, in
particular $FdV=d$, make this a complex of graded $R$-modules. Nygaard modification is exact and
preserves coherence, hence defines a functor on $D_c^b(R)$.

There is a natural morphism $\iota_r^a:M(-r\delta_a)\to M$ whose component in internal
$R$-module degree $i$ is
\begin{equation}\label{eq:iota}
 (\iota_r^a)^i=
 \begin{cases}p^{r(a-i-1)}V^r&i<a,\\1&i\geqslant a.\end{cases}
\end{equation}
The relation $V^rd=p^rdV^r$ gives compatibility with the internal differential, and the components
commute with $F,V$ and the external differential. Thus $\iota_r^a$ is a morphism of Raynaud
complexes.
\end{notation}

For a smooth proper scheme $X/k$, the sheafwise modification $W\Omega^\bullet_X(-r\delta_a)$ is
identified by $\iota_r^a$ with the subcomplex $\Nyg^{\geqslant a}_rW\Omega^\bullet_X$ in
\eqref{eq:rthnygaard}, since $p$ and $V$ are injective. Thus the two Nygaard constructions
are compatible. Nygaard modification also commutes with derived global sections of sheaves of graded
$R$-modules, yielding a natural isomorphism in $D_c^b(R)$:
\begin{equation}\label{eq:nygaardglobal}
 \RG(X,W\Omega^\bullet_{X})(-r\delta_a)
 \simeq\RG(X,\Nyg^{\geqslant a}_rW\Omega^\bullet_{X}).
\end{equation}

For \(M\in D_c^b(R)\), define
\[
 Q^a_r(M)=\cofib\bigl(M(-r\delta_a)
       \xrightarrow{\iota_r^a}M\bigr)\in D_c^b(R),\qquad P^a_r(M)=\Tot(Q^a_r(M))\in D(W).
\] 
Since Nygaard modification and finite internal totalization are exact, $P^a_r$ is a triangulated
functor $P^a_r:D_c^b(R)\rightarrow D(W)$. 
When \(M\) is the Hodge--Witt complex of a smooth proper scheme \(X/k\), 
the cofiber \(P^a_r(\RG(X,W\Omega^\bullet_{X/k}))\) is identified with \(\calP^a_{r,X/k}\) of \eqref{calQcalP}.

For the de~Rham--Witt complex, this is the modified complex of \cite[Definitions~1.1--1.2,
pp.~370--371]{Nygaard}, up to a Frobenius twist.

For $M\in D_c^b(R)$, we use Ekedahl's slope, domino, and Hodge--Witt numbers
\cite[IV.3.1]{Ekedahl}, with Frobenius $p^iF$ in internal degree $i$ and the same
second-difference convention as above.

\begin{proposition}[Length growth]\label{prop:growth}
Let $M\in D_c^b(R)$ and fix an integer $a$. Let
\[\ell^n(P^a_r(M))\coloneqq \ell_W(\mathrm{H}^n(P^a_r(M))).\] For every $n$, there is a
constant $C_{M,a,n}\geqslant0$, independent of $r\geqslant1$, such that
\[
 \left|\ell^n(P^a_r(M))
       -r\,S_a\{h_W^{i,n-i}(M)\}\right|
       \leqslant C_{M,a,n}.
\]
\end{proposition}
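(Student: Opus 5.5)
Both sides of the inequality are additive up to bounded error, so the plan is to reduce to Ekedahl's elementary building blocks for coherent Raynaud complexes and compute there.

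\textbf{Step 1: additivity.} Since $P^a_r$ is triangulated, a distinguished triangle $M'\to M\to M''$ in $D_c^b(R)$ gives a long exact sequence on $\mathrm H^*(P^a_r(-))$. These cohomology groups have finite length: by Lemma~\ref{lem:wittfiltration} applied abstractly, $Q^a_r$ has a finite filtration with factors $\sigma_*^\ell Q^a_1$, and $Q^a_1$ is killed by $p^a$ with finite-length cohomology by coherence. Therefore
\[
\ell^n(P^a_r(M))=\ell^n(P^a_r(M'))+\ell^n(P^a_r(M''))-\epsilon,
\]
where $\epsilon$ is a sum of lengths of images of connecting maps $\mathrm H^{n-1}(P^a_r(M''))\to \mathrm H^n(P^a_r(M'))$ and the analogous map in degree $n$. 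The function $r\mapsto S_a\{h_W^{i,n-i}\}$ is additive in triangles, since Hodge--Witt numbers are additive in $D_c^b(R)$ \cite[IV]{Ekedahl}. The statement thus reduces to the pieces, provided the connecting maps have length bounded independently of $r$. This is where I expect the main difficulty. My plan is to use Ekedahl's structure theory: via the slope spectral sequence, every $M$ is built by finitely many extensions from
\begin{itemize}
\item $R$-modules concentrated in a single external degree, which are extensions of finite-length (``$V$-torsion'') parts and of dominoes;
\item torsion-free parts of the form $\mathrm{H}^j$ of heart type, which split up to isogeny into pure slope pieces $W\otimes$(Dieudonné module in degree $i$) with $F$ slope in $[0,1)$.
\end{itemize}
The connecting maps are then either morphisms between modules of different slopes, or maps out of finite-length objects. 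In both cases I would show they are bounded by a constant killing all $\mathrm{Hom}$/$\mathrm{Ext}$ groups between distinct slope pieces, using Ekedahl's result that such groups are of finite type \cite[I]{Ekedahl}.

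\textbf{Step 2: computation on pieces.} For a module $N$ concentrated in internal degree $i$ with slope $\lambda\in[0,1)$ and free of rank $m$, the modification only changes degrees $<a$. So $Q^a_r(N)$ is $N/p^{r(a-i-1)}V^rN$ if $i<a$ and zero otherwise. Its length is $r(a-i-1)m+r\lambda m+O(1)$, since $V$ has slope $1-\lambda$ relative to $p$. This is precisely $r(a-(i+\lambda'))m$ with Newton slope $\lambda'=i+1-\lambda$... up to the convention; after matching with the definition of $m^{i,n-i}$ it gives $r\sum_{\lambda\le a}(a-\lambda)m_\lambda$, as in Lemma~\ref{lem:telescope}.

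For a domino in degree $(a-1,a)$, the differential $dV^r$ changes and the quotient picks up an extra $rT^{a-1,n-a+1}+O(1)$. Dominoes in other degrees contribute $O(1)$, because on them $V^r$ eventually acts through a bounded quotient. Finite-length pieces contribute $O(1)$.

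\textbf{Step 3: assembly.} Summing the contributions from Step 2 and invoking Lemma~\ref{lem:telescope} yields $rS_a\{h_W^{i,n-i}(M)\}+O(1)$. The bulk of the work is making the Step 1 error bound uniform in $r$. I would do this by noting that each connecting map factors through a fixed $p^c$-torsion object independent of $r$.
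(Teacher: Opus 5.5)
Your Step~1 has a genuine gap, at exactly the point you flag as the main difficulty.

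\textbf{Additivity fails for general triangles.} Degree-$n$ Hodge--Witt numbers are not additive along arbitrary triangles in $D_c^b(R)$. Take $E=W$ in internal degree $0$ and the rotated triangle $E[-n-1]\to 0\to E[-n]\xrightarrow{\sim}E[-n]$. The middle term has no degree-$n$ Hodge--Witt numbers, while $E[-n]$ does. Correspondingly, for $a\geqslant1$ the connecting map $\HH^n(P^a_r(E[-n]))\to\HH^{n+1}(P^a_r(E[-n-1]))$ is an isomorphism between groups of length $ra$. So both additivity and the $O(1)$ bound on connecting maps depend on choosing the right filtration.

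\textbf{Your filtration does not separate total degrees.} You filter by external degree via the slope spectral sequence, then by type-I and domino pieces inside each $\HH^b(M)$. A single $\HH^b(M)$ contains pieces of total degrees $b+i$ for several $i$. So some connecting maps run between two groups that both grow linearly in $r$, and your argument gives no control of their images.

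\textbf{The proposed bound controls exponents, not lengths.} Knowing that a connecting map is killed by $p^c$, or factors through a $p^c$-torsion object, bounds its exponent but not its length. An elementary domino $U$ is killed by $p$, yet $\HH^n(P^{j+1}_rU)\simeq k[[V]]/(V^r)$ has length $r$. Finiteness of Hom/Ext groups between slope pieces, even if available, does not turn into an $r$-uniform length bound after applying $P^a_r$.

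\textbf{The missing idea.} Order the building blocks by total degree using Ekedahl's diagonal $t$-structure.
\begin{itemize}
\item For $D\in\dc$, filter $D$ by type-I Dieudonn\'e modules and elementary dominos, all of total degree $0$. Direct computation on these factors, as in your Step~2, shows that $P^a_r$ of each factor of $D[-n]$ has cohomology only in degrees $n-1,n,n+1$, unbounded only in degree $n$.
\item In every resulting long exact sequence, each connecting map has either a bounded source (out of degree $n-1$) or a bounded target (into degree $n+1$). So the corrections are $O(1)$ by induction, and Hodge--Witt numbers are additive for short exact sequences in $\dc$.
\item Then climb the diagonal Postnikov tower $\taut_{\leqslant j-1}M\to\taut_{\leqslant j}M\to\Ht^j(M)[-j]$. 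The connecting map out of degree $j$ of the top layer lands in $\HH^{j+1}(P^a_r\taut_{\leqslant j-1}M)=0$, and all the others have bounded source.
\end{itemize}
This is pure degree bookkeeping and needs no Hom/Ext finiteness.

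A minor slip in Step~2: for $F$-slope $\lambda$ one has $\ell_W(N/V^rN)=r\,\ell_W(N/VN)=r(1-\lambda)m$, not $r\lambda m$. Your final Newton-weighted formula is nonetheless correct.
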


We prove the proposition using two finite filtrations on objects of $D_c^b(R)$. The first comes from
Ekedahl's diagonal $t$-structure, with heart $\dc$ \cite[Chapter~I, \S1]{Ekedahl}; we write
$\taut_{\leqslant \ell}$ and $\Ht^\ell$ for its truncation and cohomology functors. The second is
the usual Postnikov filtration of an $R$-complex in the external cohomological degree, with
successive quotients $\mathrm{H}^b(M)[-b]$. The two truncations commute
\cite[Proposition~0.1.4]{Ekedahl}.

For any $M\in D_c^b(R)$, each ordinary cohomology module $\mathrm{H}^b(M)$ admits a finite
filtration whose factors are internal grading shifts of Dieudonn\'e modules finite over $W$
(the type-I factors) and elementary dominos \cite[Chapter~0, \S4]{Ekedahl}. All these filtrations are independent of $r$. We
first compute $Q^a_r$ on the elementary factors.

\begin{lemma}\label{lem:typeIgrowth}
Let $E\in D_c^b(R)$ be a finite-type Dieudonn\'e module concentrated in internal degree $j$ and
external degree $n-j$. Then the conclusions of Proposition~\ref{prop:growth} hold for $E$. Moreover,
$\mathrm{H}^q(P^a_rE)=0$ for $q\notin\{n-1,n\}$.
\end{lemma}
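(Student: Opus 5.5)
The plan is to compute $P^a_r(E)$ explicitly. Since $E$ sits in a single internal degree $j$, its internal differential $d$ vanishes. Its Frobenius on $\mathrm H^{n-j}$ is $p^jF$, so the Newton slopes of $E[1/p]$ are $\lambda=j+\mu$ with $\mu\in[0,1]$ the $F$-slopes. Also $E$ has no domino part, so $h_W^{i,n-i}(E)=m^{i,n-i}(E)$. Lemma~\ref{lem:telescope} then gives
\[
S_a\{h_W^{i,n-i}(E)\}=\sum_{\lambda\leqslant a}(a-\lambda)m_\lambda .
\]
I split into the cases $j\geqslant a$ and $j<a$ according to the definition \eqref{eq:abstractN}.

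\emph{Case $j\geqslant a$.} By \eqref{eq:abstractN} and \eqref{eq:iota}, $E(-r\delta_a)=E$ and $\iota^a_r=1$. Hence $Q^a_r(E)=0$ and every $\ell^q(P^a_rE)$ vanishes. On the other side, every slope satisfies $\lambda\geqslant j\geqslant a$, so each term $(a-\lambda)m_\lambda$ with $\lambda\leqslant a$ is zero. Thus $S_a=0$, and the claim holds with $C=0$.

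\emph{Case $j<a$.} Here $E(-r\delta_a)=\sigma^r_*E$ and $\iota^a_r=p^{r(a-j-1)}V^r$, viewed as a $W$-linear map $\sigma_*^rE\to E$. The totalization of an object concentrated in one internal degree is just that module placed in total degree $n$. So $P^a_r(E)$ is the two-term complex
\[
\sigma_*^rE\xrightarrow{\;p^{r(a-j-1)}V^r\;}E
\]
in degrees $n-1,n$. This already gives the vanishing of $\mathrm H^q$ for $q\notin\{n-1,n\}$, and we have $\mathrm H^{n-1}=\ker$ and $\mathrm H^n=\coker$. Note that $\sigma_*^r$ preserves $W$-lengths, since $\sigma$ is an automorphism of $W$.

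\emph{Torsion reduction.} Use the exact sequence $0\to E_{\mathrm{tors}}\to E\to E_{\mathrm{fr}}\to0$, which is preserved by $F$ and $V$. Apply the snake lemma to the map $\phi_r=p^{r(a-j-1)}V^r$ on the three terms. On $E_{\mathrm{fr}}$ the map $V$ is injective, so $\ker\phi_r=0$ there. The kernel and cokernel of $\phi_r$ on $E_{\mathrm{tors}}$ both have length at most $\ell(E_{\mathrm{tors}})$, independently of $r$. Hence both $\ell^{n-1}(P^a_rE)$ and $|\ell^n(P^a_rE)-\ell(\coker\phi_r|_{E_{\mathrm{fr}}})|$ are bounded by $2\ell(E_{\mathrm{tors}})$. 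Since the slope numbers only depend on $E[1/p]=E_{\mathrm{fr}}[1/p]$, it remains to prove the equality
\[
\ell(\coker\phi_r|_{E_{\mathrm{fr}}})=r\,S_a .
\]

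\emph{Free part.} On a lattice $L=E_{\mathrm{fr}}$ of rank $b$, the cokernel length of an injective (semi)linear endomorphism equals the $p$-adic valuation of its determinant. Cokernel lengths are additive under composition of injective maps of lattices of equal rank, and $\sigma$-twisting does not change them. Therefore
\[
\ell(L/\phi_r L)=r(a-j-1)\,b+r\,\ell(L/VL).
\]
Moreover $\ell(L/VL)=\sum(1-\mu)m_\mu$, because $FV=p$ gives $\ell(L/FL)+\ell(L/VL)=b$, and $\ell(L/FL)=\sum\mu\, m_\mu$ is the sum of the Newton slopes (valuation of $\det F$). Combining,
\[
\ell(L/\phi_r L)=r\sum_\mu (a-j-\mu)m_\mu=r\sum_\lambda(a-\lambda)m_\lambda .
\]
Since $\lambda\leqslant j+1\leqslant a$, this sum runs over exactly the slopes with $\lambda\leqslant a$, so it equals $r\,S_a$. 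This gives the proposition with $C_{E,a,n}=2\ell(E_{\mathrm{tors}})$.

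The only point requiring care is the determinant and length bookkeeping for the semilinear $V^r$: identifying $\ell(L/V^rL)$ exactly with $r\sum(1-\mu)m_\mu$, with no error term on the free part. Once additivity of cokernel lengths is accepted, this is routine.
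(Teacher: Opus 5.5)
Your proposal is correct and follows the paper's proof essentially step for step: the split at $j\geqslant a$, the two-term complex $\sigma_*^rE\xrightarrow{p^{r(a-j-1)}V^r}E$ in degrees $n-1,n$, the snake-lemma reduction to the torsion-free quotient, and additivity of colength on the free part. There are two small differences. First, the paper cites Crew's identification $h_W^{j,n-j}(E)=\ell_W(E/VE)$, $h_W^{j+1,n-j-1}(E)=\ell_W(E/FE)$, while you derive the same identity from Lemma~\ref{lem:telescope} and the endpoint fact $\ell_W(L/FL)=v_p(\det F)=\sum\mu\, m_\mu$. Second, your constant $2\ell(E_{\mathrm{tors}})$ can be sharpened to $\ell(E_{\mathrm{tors}})$, because $\ker\phi_r=0$ on the free part makes the cokernel sequence short exact.
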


\begin{proof}
Put $b=n-j$. For $j\geqslant a$, the cone vanishes. For $j<a$,
\(P^a_r(E)\simeq\bigl[\sigma_*^rE\xrightarrow{\iota_r^a}E\bigr]\) where \(\iota_r^a=p^{r(a-j-1)}V^r\) 
in total degrees $n-1,n$. Its cohomology is $\ker(\iota_r^a)$ and $\coker(\iota_r^a)$ in these
degrees, respectively.

Suppose first that $E$ is torsion-free. Its only possibly nonzero Hodge--Witt numbers admit
the description \(h_W^{j,b}(E)=\ell_W(E/VE)\) and \(h_W^{j+1,b-1}(E)=\ell_W(E/FE)\);
see \cite[Lemma~2(ii) and the proof of Lemma~3]{Crew}. Since $FV=p$, these lengths sum to $\rk_W E$.
The map $\iota_r^a$ is injective, and additivity of colength gives
\begin{align*}
 \ell^n(P^a_rE)
 &=r\bigl((a-j-1)\rk_W E+\ell_W(E/VE)\bigr)\\
 &=r\sum_{i<a}(a-i)h_W^{i,n-i}(E).
\end{align*}

For general $E$, put $e(E)=\ell_W(E_{\mathrm{tor}})$ and $\overline E=E/E_{\mathrm{tor}}$. The snake
lemma, applied to the torsion sequence and $\iota_r^a$, bounds both the kernel on $E$ and the
difference of cokernel lengths on $E$ and $\overline E$ by $e(E)$. These modules have the same
Hodge--Witt numbers, since their rational slopes agree and their domino terms vanish. Thus
\begin{gather*}
 0\leqslant \ell^n(P^a_rE)-rS_a\{h_W^{i,n-i}(E)\}\leqslant e(E),\\
 \ell^{n-1}(P^a_rE)\leqslant e(E).
\end{gather*}
All other cohomology vanishes. For $j\geqslant a$, the same bounds hold with
$S_a\{h_W^{i,n-i}(E)\}=0$. Perfection follows since $W$ is a DVR.
\end{proof}

\begin{lemma}\label{lem:dominogrowth}
Let $U=U_t\in D_c^b(R)$ be an elementary domino of type $t$, concentrated in external degree $n-j$
and internal degrees $j,j+1$. Then the conclusions of Proposition~\ref{prop:growth} hold for $U$.
Moreover, $\mathrm{H}^q(P^a_rU)=0$ for $q\notin\{n-1,n,n+1\}$.
\end{lemma}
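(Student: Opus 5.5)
The plan is to reduce to one explicit computation on the standard model of $U_t$. First I record what the target bound is. An elementary domino is torsion, so its rational slopes vanish and all $m^{i,n-i}(U)=0$. Its only nonzero domino number is $T^{j,n-j}(U)=T$, the dimension of $U$; for Ekedahl's normalization of $U_t$ I expect $T=t$, and I would check this against \cite[Chapter~0, \S4]{Ekedahl}. By Lemma~\ref{lem:telescope},
\[
 S_a\{h_W^{i,n-i}(U)\}=T^{a-1,n-a+1}(U),
\]
which equals $T$ for $a=j+1$ and $0$ otherwise. So the claim to prove is
\[
 \ell^n(P^a_rU)=rT+O(1)\quad\text{if }a=j+1,\qquad \ell^n(P^a_rU)=O(1)\quad\text{otherwise}.
\]
Here $O(1)$ means bounded independently of $r$.

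I use the exact triangle
\[
 \Tot\bigl(U(-r\delta_a)\bigr)\to\Tot(U)\to P^a_r(U).
\]
Since $U$ sits in internal degrees $j,j+1$ and external degree $n-j$, $\Tot(U)$ is the two-term complex $[U^j\xrightarrow{d}U^{j+1}]$ in total degrees $n,n+1$. Similarly, $\Tot(U(-r\delta_a))$ is the same complex with $U^i$ replaced by $\sigma_*^rU^i$ for $i<a$, and with $d$ replaced by $dV^r$ exactly when $a=j+1$. Both complexes live in degrees $n,n+1$. The long exact sequence then immediately gives $\mathrm H^q(P^a_rU)=0$ for $q\notin\{n-1,n,n+1\}$.

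For the lengths, I use that $d:U^j\to U^{j+1}$ has finite-length kernel and cokernel; this is the defining finiteness of a coherent domino, since $\Tot$ of a coherent complex has finite-length cohomology here. I split into cases on $a$.

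\begin{itemize}
\item If $a\leqslant j$, the modification is the identity on $U$, and the cone vanishes.
\item If $a\geqslant j+2$, the modified complex is $\sigma_*^r$ of $[U^j\xrightarrow{d}U^{j+1}]$, since the boundary $a-1\to a$ lies outside the support of $U$. Frobenius restriction over a perfect field preserves length, so its cohomology lengths equal those of $\Tot(U)$, which are independent of $r$. The long exact sequence then bounds $\ell^q(P^a_rU)$ by a constant independent of $r$.
\item If $a=j+1$, the modified complex is $[\sigma_*^rU^j\xrightarrow{dV^r}U^{j+1}]$. Additivity of kernels and cokernels for the composite $dV^r$ gives
\[
 \ell(\coker dV^r)=\ell(\coker d)+\ell(\coker V^r)-(\text{a correction bounded by }\ell(\ker d)).
\]
On the kernel side, $\ker(dV^r)$ differs from $\ker V^r$ by at most $\ell(\ker d)$.
\end{itemize}

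The main obstacle is the structure of $U^j$ in the case $a=j+1$. I need $V$ on $U^j$ to be injective up to a bounded defect, with $\ell(U^j/V^rU^j)=rT+O(1)$. I would verify this directly on Ekedahl's explicit model of $U_t$, where the degree-$j$ part is a $k[[V]]$-type module with $\dim_k U^j/VU^j=T$ (up to a bounded error), tying $T$ to the domino dimension. Granting this, the modified complex has $\ell^{n+1}=rT+O(1)$ and $\ell^n=O(1)$. Feeding this and the bounded lengths of $\Tot(U)$ into the long exact sequence gives $\ell^n(P^{j+1}_rU)=rT+O(1)$ and bounded lengths in degrees $n\pm1$. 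Taking the maximum of the finitely many constants over the three cases gives the required $C_{U,a,n}$.
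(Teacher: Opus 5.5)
Your proposal is correct and follows essentially the same route as the paper: the same case split on $a$, with the key input at $a=j+1$ being that the source of $U$ is $k[[V]]$. The paper skips your kernel/cokernel bookkeeping by noting that $\iota^{j+1}_r=(V^r,\mathrm{id})$ is termwise injective, so $P^{j+1}_r(U)\simeq(k[[V]]/(V^r))[-n]$ exactly. One correction: your side guess $T=t$ is wrong, because an elementary domino $U_t$ has domino number $T=1$ for every type $t$; the type, which may be zero or negative, only governs $\ell^n(\Tot U)=\max(t,0)$ and $\ell^{n+1}(\Tot U)=\max(-t,0)$. The slip is harmless, since your argument consistently uses $T=\dim_k U^j/VU^j$, and that equals $1$ here.
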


\begin{proof}
The source of $U$ is $k[[V]]$, its target is killed by $V$, and $pU=0$. Its total cohomology has lengths
\(\ell^n(\Tot U)=\max(t,0)\) and \(\ell^{n+1}(\Tot U)=\max(-t,0),\)
and vanishes otherwise.

For $a\leqslant j$, the cone vanishes. For $a\geqslant j+2$, the modification is a scalar twist and
$\iota_r^a=0$, since $p$ kills the source and $V$ kills the target. Hence
\(P^a_r(U)\simeq\Tot U\oplus\sigma_*^r\Tot U[1],\)
whose cohomology is supported in $[n-1,n+1]$ and has length at most $|t|$ in each degree. At
$a=j+1$, the components of $\iota_r^a$ are $V^r$ and the identity. This map is injective, with
quotient $k[[V]]/(V^r)$ in total degree $n$, so
\( P^a_r(U)\simeq\bigl(k[[V]]/(V^r)\bigr)[-n]\), and \(\ell^n(P^a_rU)=r.\)
The only nonzero Hodge--Witt numbers are
\(h_W^{j,n-j}(U)=1, h_W^{j+1,n-j-1}(U)=-2,h_W^{j+2,n-j-2}(U)=1.\)
Thus \[
 S_a\{h_W^{i,n-i}(U)\}
 =\sum_{i<a}(a-i)h_W^{i,n-i}(U)
 =\begin{cases}
 1,&a=j+1,\\
 0,&a\neq j+1.
 \end{cases}
\]
It follows that Proposition~\ref{prop:growth} holds, with error and off-degree lengths bounded by $|t|$; both
vanish at $a=j+1$. The resulting bounded complexes with finite-length cohomology are perfect over
$W$.
\end{proof}

\begin{lemma}\label{lem:heartgrowth}
Let $D\in\dc$ and $n\in\mathbb{Z}$. Then $M=D[-n]$ satisfies the conclusions of
Proposition~\ref{prop:growth}. More precisely, for each fixed $a$,
\begin{equation}\label{eq:heartgrowth}
 \begin{aligned}
 \ell^n(P^a_rM)&=rS_a\{h_W^{i,n-i}(M)\}+O(1),\\
 \ell^{n+1}(P^a_rM)&=O(1), \quad\ell^{n-1}(P^a_rM)=O(1)\\
 \ell^q(P^a_rM)&=0\qquad\bigl(q\notin\{n-1,n,n+1\}\bigr),
 \end{aligned}
\end{equation}
where the bounds are independent of $r$.
\end{lemma}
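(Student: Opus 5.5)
The plan is to reduce to the two elementary cases already treated, Lemmas~\ref{lem:typeIgrowth} and \ref{lem:dominogrowth}, by dévissage inside the heart $\dc$. An object $D\in\dc$ is a coherent Raynaud complex whose internal degree $i$ sits in external degree $-i$. So $M=D[-n]$ has its internal degree $i$ component in external degree $n-i$, and its ordinary cohomology $\mathrm{H}^{n-j}(M)$ is concentrated in internal degree $j$ (or $j,j+1$ for domino pieces). I will use Ekedahl's structure theory: $M$ has a finite filtration by the external Postnikov filtration, and each $\mathrm{H}^{b}(M)[-b]$ has a finite filtration with factors that are shifts of type-I Dieudonné modules, concentrated in internal degree $j$ and external degree $n-j$, or elementary dominos $U_t$, concentrated in external degree $n-j$ and internal degrees $j,j+1$. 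Combining the two filtrations gives a finite filtration of $M$ in $D_c^b(R)$. It is independent of $r$, and its successive quotients $E_1,\dots,E_N$ are exactly of the shapes covered by the two lemmas, all with the same total degree $n$.

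Next I apply the triangulated functor $P^a_r$ to this filtration, obtaining a finite tower of exact triangles in $D(W)$ with graded pieces $P^a_r(E_k)$. By the lemmas, each $P^a_r(E_k)$ has cohomology only in degrees $n-1,n,n+1$. Its degree-$n$ length is $rS_a\{h_W^{i,n-i}(E_k)\}+O(1)$, and the neighbouring degrees have length $O(1)$, with constants independent of $r$. An induction on the filtration with the long exact sequence then shows three things for $P^a_r(M)$. First, its cohomology vanishes outside $[n-1,n+1]$. Second, $\ell^{n\pm1}$ is bounded by the sum of the corresponding bounds for the pieces plus the degree-$n\pm1$ contributions. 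Third, $\ell^n(P^a_rM)$ differs from $\sum_k\ell^n(P^a_rE_k)$ by at most the sum of the lengths of the connecting images. Those images land in or come from degrees $n\pm1$, whose lengths are $O(1)$, so the total error is bounded independently of $r$.

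To finish, I need additivity of $S_a\{h_W^{i,n-i}\}$ along this filtration, and this is the main obstacle. Hodge--Witt numbers are not additive on arbitrary triangles, because domino numbers can jump. They are, however, additive on short exact sequences in the heart $\dc$: slope numbers are additive, and Ekedahl shows dominos are additive in $\dc$, since domino dimension is an exact invariant there. The combined Postnikov/structure filtration also lies in the heart, because Postnikov truncation commutes with the diagonal truncation $\taut$. So every step is an exact sequence in $\dc$ after shifting by $[-n]$. This gives $\sum_kS_a\{h_W^{i,n-i}(E_k)\}=S_a\{h_W^{i,n-i}(M)\}$ by linearity of $S_a$.

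If exactness of the type-I/domino filtration inside $\dc$ proves delicate, I would fall back on the formula of Lemma~\ref{lem:telescope}. It writes $S_a\{h_W\}$ as a slope term, which is additive by rational isogeny invariance, plus the single domino number $T^{a-1,n-a+1}$. I would then check additivity of that domino term directly from Ekedahl's exactness of the domino functor on $\dc$. Finally, perfectness of $P^a_r(M)$ follows because it is a bounded complex with finite-length cohomology over the DVR $W$.
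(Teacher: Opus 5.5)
Your proposal is correct and follows the paper's proof essentially step for step. The paper also takes an $r$-independent finite filtration of $D$ inside $\dc$, obtained from the commuting Postnikov and diagonal truncations together with the Illusie--Raynaud d\'evissage into type-I modules and elementary dominos. It applies Lemmas~\ref{lem:typeIgrowth} and~\ref{lem:dominogrowth} to the factors, bounds the two degree-$n$ connecting-map corrections exactly as you do (one has bounded source, the other bounded target), and concludes by additivity of Hodge--Witt numbers in $\dc$.

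The differences are cosmetic. The paper first reduces to $n=0$ by a shift. Your opening description of $\dc$ (internal degree $i$ sitting in external degree $-i$) is too restrictive for dominos, whose target lies in internal degree $j+1$ in the same external degree. Your argument only uses the shape of the factors, so nothing depends on that description.
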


\begin{proof}
Since $P^a_r$ commutes with external shifts, it suffices to treat $n=0$. By
\cite[Proposition~0.1.4]{Ekedahl} and \cite[I, Theorem~3.8 and Proposition~2.15]{IR}, there is a
finite filtration in $\dc$
\[
 0=D_0\subset D_1\subset\cdots\subset D_m=D,
 \qquad E_\nu=D_\nu/D_{\nu-1},
\]
independent of $r$, whose factors $E_\nu$ are concentrated in an external degree $\ell_\nu$ and are
either finite-type Dieudonn\'e modules in internal degree $-\ell_\nu$ or elementary dominos in
internal degrees $-\ell_\nu,1-\ell_\nu$. Lemmas~\ref{lem:typeIgrowth} and~\ref{lem:dominogrowth}
apply with source total degree zero.

For a triangle $A'\to B'\to C'\to A'[1]$ with finite-length cohomology, the connecting maps
$\delta_q:\mathrm{H}^q(C')\to \mathrm{H}^{q+1}(A')$ give
\begin{equation}\label{eq:leslength}
 \ell^q(B')=\ell^q(A')+\ell^q(C')
 -\ell_W(\im(\delta_{q-1}))- \ell_W(\im(\delta_q)).
\end{equation}
In degree zero, the two correction terms have bounded length, since one has bounded source
and the other bounded target. The elementary estimates and these long exact sequences give,
by finite induction,
\[
 \begin{aligned}
 \ell^q(P^a_rD)&=0 &&\bigl(q\notin\{-1,0,1\}\bigr),\\
 \ell^1(P^a_rD)&=O(1),\quad \ell^{-1}(P^a_rD)=O(1),\\
 \ell^0(P^a_rD)&=r\sum_\nu S_a\{h_W^{i,-i}(E_\nu)\}+O(1).
 \end{aligned}
\]
All errors are bounded independently of $r$. Additivity of Hodge--Witt numbers in $\dc$
\cite[Proposition~I.1.4(i), III, (2.6)(i)]{Ekedahl} identifies the sum with
$S_a\{h_W^{i,-i}(D)\}$; the Hodge--Witt numbers of $D$ vanish outside total degree zero.
This proves \eqref{eq:heartgrowth}. The same filtration gives finite-length cohomology in a bounded
range, hence perfectness over $W$.
\end{proof}

\begin{proof}[Proof of Proposition~\ref{prop:growth}]
Consider the diagonal cohomology $\Ht^j(M)$ of $M$ and the truncations $\taut_{\leqslant j}M$.
Recall that
\(h_W^{i,n-i}(M)=h_W^{i,n-i}(\Ht^n(M)[-n])\) for every \(i\).
By Lemma~\ref{lem:heartgrowth}, $P^a_r(\Ht^j(M)[-j])$ has cohomology in degrees $[j-1,j+1]$, with
bounded lengths outside degree $j$, and
\[
 \ell^j(P^a_r\Ht^j(M)[-j])
 =rS_a\{h_W^{i,j-i}(M)\}+O(1).
\]
Apply $P^a_r$ to the finite diagonal Postnikov tower
\[
 \taut_{\leqslant j-1}(M)\rightarrow \taut_{\leqslant j}(M)\rightarrow \Ht^j(M)[-j]\xrightarrow{+1} .
\]
Every connecting map $\mathrm{H}^q(P^a_r \Ht^j(M)[-j])\to \mathrm{H}^{q+1}(P^a_r \taut_{\leqslant j-1}(M))$ has image of
uniformly bounded length: for $q\ne j$ its source has uniformly bounded length, while for $q=j$ its target
vanishes, since all earlier layers have cohomology in degrees at most $j$. Thus, in degree $n$,
\eqref{eq:leslength} and finite induction give
\[
 \ell^n(P^a_rM)
 =\sum_j\ell^n(P^a_r\Ht^j(M)[-j])+O(1)
 =rS_a\{h_W^{i,n-i}(M)\}+O(1).
\] 
\end{proof}

\begin{corollary}\label{cor:geometricgrowth}
Let $X$ be smooth and proper over a perfect field $k$, and put $W=W(k)$. For fixed integers $n$ and
$a\geqslant1$,
\begin{equation}\label{eq:geometricgrowth}
 \ell^n\bigl(\RG\bigl(X,W\Omega^\bullet_X/\Nyg^{\geqslant a}_rW\Omega^\bullet_X\bigr)\bigr)
 =rS_a\{h_W^{i,n-i}(X)\}+O(1),
\end{equation}
with error bounded independently of $r\geqslant1$.
\end{corollary}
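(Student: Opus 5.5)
The plan is to apply Proposition~\ref{prop:growth} to the Hodge--Witt cohomology complex $M=\RG(X,W\Omega^\bullet_{X/k})$. By \cite[II, Theorem~2.2]{IR}, $M$ lies in $D_c^b(R)$, so the proposition gives, for each fixed $n$ and $a$, a constant $C_{M,a,n}$ independent of $r\geqslant1$ with
\[
 \bigl|\ell^n(P^a_r(M))-rS_a\{h_W^{i,n-i}(M)\}\bigr|\leqslant C_{M,a,n}.
\]
Two identifications then give \eqref{eq:geometricgrowth}. First, $P^a_r(M)$ should be identified with $\calP^a_{r,X/k}=\RG(X,\calQ^a_{r,X/k})$. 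Second, the Hodge--Witt numbers $h_W^{i,n-i}(M)$ should coincide with $h_W^{i,n-i}(X)$.

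For the first identification, I would use the isomorphism \eqref{eq:nygaardglobal}, $M(-r\delta_a)\simeq\RG(X,\Nyg^{\geqslant a}_rW\Omega^\bullet_X)$. This rests on the sheafwise identification, via $\iota^a_r$, of $W\Omega^\bullet_X(-r\delta_a)$ with the subcomplex \eqref{eq:rthnygaard}, which uses injectivity of $p$ and $V$. Under this isomorphism the global map $\iota^a_r$ corresponds to $\RG$ of the inclusion $\iota$. Since $\RG$ is exact, the cofiber of $\iota^a_r$ is $\RG(X,W\Omega^\bullet_X/\Nyg^{\geqslant a}_rW\Omega^\bullet_X)$, computed as a Raynaud complex. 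Applying $\Tot$ converts this Raynaud complex into the corresponding $W$-complex. The reason is that $\Tot$ of $\RG$ of a sheaf of graded $R$-modules bounded in internal degree is $\RG$ of the sheafwise totalized complex, exactly as in the statement that $\Tot$ takes Hodge--Witt cohomology to crystalline cohomology. This is the identification of $P^a_r(M)$ with $\calP^a_{r,X/k}$ recorded just before Proposition~\ref{prop:growth}. Taking $\ell^n$ of both sides, the left-hand side of \eqref{eq:geometricgrowth} equals $\ell^n(P^a_r(M))$.

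For the second identification, note that the slope, domino and Hodge--Witt numbers of $X$ in the Conventions are by definition Ekedahl's invariants of $M$ \cite[IV.3.1]{Ekedahl}. The slope numbers come from $\mathrm{H}^n(\Tot M)=\mathrm{H}^n_{\cris}(X/W)$, and the domino numbers come from the differential $d$ on $\mathrm{H}^{n-i}(X,W\Omega^i)=\mathrm{H}^{n-i}(M^i)$. The Frobenius convention $p^iF$ agrees in both settings. Hence $S_a\{h_W^{i,n-i}(M)\}=S_a\{h_W^{i,n-i}(X)\}$, and the estimate follows with error bounded by $C_{M,a,n}$, uniformly in $r$.

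The only delicate point I expect is compatibility of the maps, not just the objects. One must check that the isomorphism \eqref{eq:nygaardglobal} intertwines the abstract $\iota^a_r$ of \eqref{eq:iota} with the inclusion, so that the cofibers agree. This holds because both are defined by the same componentwise formula $p^{r(a-i-1)}V^r$, and Nygaard modification commutes with taking an injective (or Cartan--Eilenberg) resolution in graded $R$-modules. Everything else is a direct invocation of earlier results.
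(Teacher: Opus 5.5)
Your proposal is correct and follows the paper's proof. Like the paper, you apply Proposition~\ref{prop:growth} to the Illusie--Raynaud coherent complex $\RG(X,W\Omega^\bullet_X)\in D_c^b(R)$, then use \eqref{eq:nygaardglobal} and the sheafwise injectivity of $p$ and $V$ to identify the cone of $\iota^a_r$ with the strict quotient $W\Omega^\bullet_X/\Nyg^{\geqslant a}_rW\Omega^\bullet_X$. The further checks you spell out (that $\iota^a_r$ matches the inclusion, the passage through $\Tot$, and that the Hodge--Witt numbers of $M$ agree with those of $X$) are exactly the steps the paper leaves implicit.
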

\begin{proof}
The Raynaud complex $\RG(X,W\Omega^\bullet_X)$ lies in $D_c^b(R)$ by Illusie--Raynaud's coherence
theorem \cite[II, Theorem~2.2]{IR}. Apply Proposition~\ref{prop:growth} and
\eqref{eq:nygaardglobal}. The sheafwise injectivity of $p$ and $V$ identifies the derived cone with
the strict quotient in \eqref{calQcalP}.
\end{proof}

\subsection{Hodge--Witt polygons under specialization}\label{subsec:hwspecialization}
We next prove a semicontinuity statement for perfect complexes over Witt rings. The following
elementary calculation expresses each fiber length in terms of minors.

\begin{lemma}[Cohomology lengths and minors]\label{lem:minors}
Let $V$ be a discrete valuation ring with uniformizer $\pi$, and let $C^\bullet$ be a bounded complex of finite free $V$-modules that becomes
acyclic over $\Frac(V)$. If $t$ is the rank of $\operatorname{im}(d^{n-1})$, then
\begin{equation}\label{eq:minorlength}
 \ell_V^n(C^\bullet)=v_{\pi}\bigl(\mathrm{I}_t(d: C^{n-1}\to C^n)\bigr),
\end{equation}
where $\mathrm{I}_t$ is the ideal generated by $t$-by-$t$ minors and $\mathrm{I}_0=V$.
\end{lemma}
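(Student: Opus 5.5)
We will reduce the statement to Smith normal form and then read off both sides of \eqref{eq:minorlength}. Since $V$ is a principal ideal domain and each $C^i$ is finite free, all submodules of the $C^i$ are free, and $\operatorname{im}(d^{n-1})$ has some rank $t$. Acyclicity over $\Frac(V)$ gives $\ker(d^n)\otimes\Frac(V)=\operatorname{im}(d^{n-1})\otimes\Frac(V)$, so $\ker(d^n)$ also has rank $t$. Moreover, $\ker(d^n)$ is saturated in $C^n$: the quotient $C^n/\ker(d^n)$ embeds into $C^{n+1}$, so it is torsion-free. Hence $\ker(d^n)$ is a direct summand of $C^n$ of rank $t$.

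Next we apply the elementary divisor theorem to $d^{n-1}:C^{n-1}\to C^n$. We choose bases of $C^{n-1}$ and $C^n$ in which the matrix of $d^{n-1}$ is diagonal, with entries $\pi^{e_1},\dots,\pi^{e_t}$ followed by zeros. The first $t$ basis vectors of $C^n$ then span the saturation of $\operatorname{im}(d^{n-1})$. This saturation has rank $t$ and contains $\operatorname{im}(d^{n-1})$, so it lies inside the saturated rank-$t$ module $\ker(d^n)$; equal ranks and saturation force the two to coincide. Therefore
\[
\HH^n(C^\bullet)=\ker(d^n)/\operatorname{im}(d^{n-1})\cong\bigoplus_{s=1}^t V/\pi^{e_s},
\]
and its length is $\sum_s e_s$.

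On the other side, the ideal of $t\times t$ minors is invariant under change of bases, since multiplying by invertible matrices does not change determinantal ideals (by Cauchy--Binet). For the diagonal form, the nonzero $t\times t$ minors are generated by $\pi^{\sum e_s}$, so $v_\pi(\mathrm I_t)=\sum_s e_s$, which matches the length computed above. The edge case $t=0$ also checks out: then $\ker(d^n)$ is a saturated submodule of rank $0$, hence zero, so the length is $0$, in agreement with $\mathrm I_0=V$.

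The only step needing care is identifying $\ker(d^n)$ with the saturation of $\operatorname{im}(d^{n-1})$. This uses both the rational acyclicity and the freeness of $C^{n+1}$ to get torsion-free cokernels, and boundedness is not needed beyond the finiteness of the modules.
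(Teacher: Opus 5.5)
Your proof is correct and follows essentially the same route as the paper's: both show that $\ker(d^n)$ is saturated (hence a direct summand) because $\im(d^n)\subset C^{n+1}$ is torsion-free, and both read the length off the Smith normal form of $d^{n-1}$ by summing the exponents of its nonzero elementary divisors. You also spell out two steps the paper leaves implicit: the rank argument identifying $\ker(d^n)$ with the saturation of $\im(d^{n-1})$, and the base-change (Cauchy--Binet) invariance of $\mathrm{I}_t$ that turns the sum of exponents into $v_\pi(\mathrm{I}_t)$.
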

\begin{proof}
The kernel $Z^n=\ker(d^n)$ is saturated in $C^n$, since $C^n/Z^n=\im(d^n)$ is torsion-free, and
hence is a direct summand. The nonzero Smith factors of $d^{n-1}$ in $C^n$ are therefore the same as
those in $Z^n$. The sum of their valuations is the length of $Z^n/\im(d^{n-1})$. If $t=0$, rational
acyclicity forces $Z^n=0$.
\end{proof}

\begin{lemma}[Semicontinuity of Witt-vector lengths]\label{lem:equalcharlength}
Let $B$ be a perfect $\mathbb F_p$-algebra, and let $\calP$ be a perfect $W(B)$-complex
such that $\calP[1/p]$ is acyclic. For every integer $n$, the function
\[
 \ell^n(\calP,x):=\ell_{W(\kappa(x))}^n
 \bigl(\calP\otL_{W(B)} W(\kappa(x))\bigr),
 \qquad x\in\operatorname{Spec}(B),
\]
is upper semicontinuous. More precisely, for every integer $m\geqslant0$, the locus
\[\{x:\ell^n(\calP,x)\geqslant m\}\] is Zariski closed and is locally the zero set of a finitely
generated ideal. In particular, if $y$ is a specialization of $x$, then
\begin{equation}\label{eq:equalcharfiniteineq}
 \ell^n(\calP,y)\geqslant\ell^n(\calP,x).
\end{equation}
The lengths are unchanged by extending the residue fields to perfect fields.
\end{lemma}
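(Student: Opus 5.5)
Locally on $\operatorname{Spec}(B)$ I would represent $\calP$ by a bounded complex $C^\bullet$ of finite free $W(B)$-modules. This uses that $W(B)$ is $p$-adically complete, that $W(B)/p=B$, and that perfect complexes over $W(B)$ are locally (on $\operatorname{Spec} B=\operatorname{Spec} W(B)/p$, refining by Zariski opens $D(\bar f)$ and using $W(B_{\bar f})=W(B)[1/[f]]^\wedge$) quasi-isomorphic to finite free complexes of some fixed ranks. For a point $x$, the base change $C^\bullet\otimes_{W(B)}W(\kappa(x))$ is then a bounded complex of finite free modules over the DVR $W(\kappa(x))$ with uniformizer $p$. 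It is rationally acyclic because $\calP[1/p]$ is acyclic and localization commutes with base change. Lemma~\ref{lem:minors} therefore gives
\[
 \ell^n(\calP,x)=v_p\bigl(\mathrm I_{t_x}(d^{n-1}\otimes W(\kappa(x)))\bigr),
\]
where $t_x$ is the generic rank of $d^{n-1}$ at $x$. Rational acyclicity makes this rank an Euler-characteristic expression $t_x=\sum_{q\le n-1}(-1)^{n-1-q}\rk C^q$, so $t_x=t$ is independent of $x$. This is the key point, and I would verify it first.

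Fix $t$ and the $t\times t$ minors $\Delta_1,\dots,\Delta_N\in W(B)$ of the matrix of $d^{n-1}$. Write each $\Delta_s=\sum_{e\ge0}V^e[\beta_{s,e}]=\sum_e p^e[\beta_{s,e}^{1/p^e}]$ in Teichmüller form, using perfectness of $B$. Over the perfect field $\kappa(x)$, the valuation $v_p(\Delta_s(x))$ is at least $m$ exactly when $\beta_{s,e}(x)=0$ for all $e<m$. The valuation of the ideal $\mathrm I_t(x)$ is the minimum over $s$. Hence
\[
\{x:\ell^n(\calP,x)\ge m\}=V\bigl(\beta_{s,e}: 1\le s\le N,\ 0\le e<m\bigr),
\]
which is the zero set of a finitely generated ideal. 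Because the ideal $\mathrm I_t$ and its valuation do not depend on the choice of generators, the Teichmüller-coordinate description is intrinsic, and different local free representatives give the same function. The closed sets therefore glue, which gives upper semicontinuity. The specialization inequality \eqref{eq:equalcharfiniteineq} follows because a closed set containing $x$ contains every specialization $y$.

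The final sentence follows from the same formula. For a perfect extension $\kappa\to\kappa'$, the map $W(\kappa)\to W(\kappa')$ is a flat extension of DVRs with the same uniformizer $p$, so $p$-adic valuations of the minors are preserved. Equivalently, lengths of finite-length cohomology are preserved by the flat base change $\otimes_{W(\kappa)}W(\kappa')$.

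I expect the main obstacle to be the bookkeeping in the first step. One must justify the local finite free representatives over $W(B)$ and check that the rank $t$ in Lemma~\ref{lem:minors} is the same at all points. If needed, I would sidestep the localization issue by working directly with the whole ideal $\mathrm I_t(d^{n-1})$, which is well defined up to homotopy invariance of Fitting-type invariants of a perfect complex. I would then use only the Teichmüller-coordinate criterion above.
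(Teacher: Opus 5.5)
Your proposal is correct and follows essentially the same route as the paper. You localize to a principal open where the terms become free and use rational acyclicity to fix the rank $t$ of $d^{n-1}$ at every point. You then compute $\ell^n(\calP,x)$ as the minimal $p$-adic valuation of the $t\times t$ minors via Lemma~\ref{lem:minors}, cut out $\{\ell^n\geqslant m\}$ by the vanishing of the first $m$ Witt coordinates of those minors, and use flatness of $W(\kappa)\to W(\kappa')$ for the last sentence. Your explicit Teichm\"uller expansion and the completion description of $W(B_{\bar f})$ only spell out details the paper leaves implicit.
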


\begin{proof}
Represent $\calP$ by a bounded complex $C^\bullet$ of finite projective $W(B)$-modules. Zariski locally
on $\operatorname{Spec}(B)$, we may assume its terms are free: choose a principal open $D(f)$
on which the terms of $C^\bullet/p$ are free, and base change to $W(B_f)$. A basis modulo $p$ thus
lifts to a basis over $W(B_f)$.

For each point $x\in\operatorname{Spec}(B)$, the residue field $\kappa(x)$ is perfect, so $W(\kappa(x))$ is a discrete valuation ring with uniformizer $p$. Each fiber of
$C^\bullet$ becomes acyclic after inverting $p$. The ranks of the resulting differentials are
therefore determined by the ranks of its terms, starting from the leftmost term. In particular,
$d^{n-1}$ has a fixed rational rank $t$ on this open subset. If $t=0$, then $\ell^n=0$.
Otherwise, let $\beta_1,\ldots,\beta_N\in W(B_f)$ be its $t$-by-$t$ minors.
Lemma~\ref{lem:minors} gives
\[
 \ell^n(\calP,x)=\min_i v_p\bigl(\beta_i(x)\bigr).
\]
For $m\geqslant1$, the condition $\ell^n(\calP,x)\geqslant m$ is precisely the vanishing of the first
$m$ Witt coordinates of every $\beta_i$ at $x$. These finitely many elements of $B_f$ define
the required closed locus. The assertion for $m=0$ is immediate, and closedness gives
\eqref{eq:equalcharfiniteineq}. Finally, an extension of perfect fields induces a flat map of
Witt discrete valuation rings with the same uniformizer $p$, and hence preserves these cohomology lengths.
\end{proof}

We now combine the length semicontinuity of Lemma~\ref{lem:equalcharlength} with
Corollary~\ref{cor:geometricgrowth} to prove Theorem~\ref{thm:equalchar} directly over an
arbitrary base. The finite Nygaard quotients give the following stronger statement for weighted
sums.

\begin{proposition}[Closed loci and specialization]\label{prop:hwclosed}
Let $S$ be a connected $\mathbb F_p$-scheme, 
and let $f:X\to S$ be a proper smooth morphism. Fix $n\geqslant0$. For $x\in S$, write
$X_{\bar x}=X\otimes_S\overline{\kappa(x)}$.
\begin{enumerate}[label=\textup{(\roman*)}]
\item For every $a\in\mathbb Z$ and $c\in\mathbb R$, the locus
\[
 \left\{x\in S:
 S_a\{h_W^{i,n-i}(X_{\bar x})\}\geqslant c\right\} 
\]
is Zariski closed.
\item Suppose the $n$-th Betti number of $X_{\bar x}$ is $b$. For any continuous function
$P:[0,b]\to\mathbb R$, the locus where the degree-$n$ Hodge--Witt polygon of $X_{\bar x}$ lies on
or below $P$ is Zariski closed.
\item The above loci are locally the zero set of a finitely generated ideal. 
\end{enumerate}In particular, for every specialization $x\leadsto y$, 
\(\HW_n(X_{\bar y})\leqslant\HW_n(X_{\bar x})\). 
\end{proposition}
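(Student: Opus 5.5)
The plan is to reduce everything to Lemma~\ref{lem:equalcharlength} applied to the perfect complexes $\calP^a_{r,X/S}$ of Lemma~\ref{lem:wittperfectness}, and then to pass to the limit $r\to\infty$ using Corollary~\ref{cor:geometricgrowth}.

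\emph{Reduction to a perfect affine base.} The loci in question are defined fiberwise through geometric fibers, so they are insensitive to universal homeomorphisms. I would replace $S$ by its perfection $S_{\mathrm{perf}}$ and $X$ by $X\times_S S_{\mathrm{perf}}$. The map $S_{\mathrm{perf}}\to S$ is a homeomorphism, and geometric fibers are unchanged. Since closedness is local, I may then assume $S=\operatorname{Spec}(B)$ with $B$ perfect. For finite generation of ideals in (iii), the closed locus in $S_{\mathrm{perf}}$ should descend: an element $g\in B_{\mathrm{perf}}$ has $g^{p^N}$ coming from the original ring, with the same zero set. If $S$ is not noetherian, this descent step needs care. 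I would first use standard noetherian approximation ($X\to S$ of finite presentation locally), but I expect the descent argument to suffice.

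\emph{Finite-level closedness.} For each $a\geqslant1$ and $r\geqslant1$, Lemma~\ref{lem:wittperfectness} says $\calP^a_{r,X/S}$ is perfect with $\calP[1/p]=0$. Its base change to $W(\overline{\kappa(x)})$ computes $\calP^a_{r,X_{\bar x}}$; note that $\overline{\kappa(x)}$ is perfect, so the base-change equivalence applies. Lemma~\ref{lem:equalcharlength} then shows that
\[
x\mapsto \ell^n(\calP^a_{r},x)=\ell^n\bigl(\RG(X_{\bar x},\calQ^a_{r})\bigr)
\]
is upper semicontinuous, with the loci $\{\ell^n\geqslant m\}$ cut out locally by finitely many Witt coordinates of minors.

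\emph{Limit $r\to\infty$.} By Corollary~\ref{cor:geometricgrowth}, we have $S_a\{h_W^{i,n-i}(X_{\bar x})\}=\lim_r \ell^n(\calP^a_r,x)/r$ pointwise. The main obstacle is that the error $O(1)$ depends on the fiber, so a limit of upper semicontinuous functions need not be upper semicontinuous. I would avoid uniformity with a subadditivity/superadditivity trick. Lemma~\ref{lem:wittfiltration} gives a triangle $\sigma^r_*\calP^a_\ell\to\calP^a_{r+\ell}\to\calP^a_r$ on each fiber. Frobenius twist does not change lengths, so $\ell^n_{r+\ell}\leqslant\ell^n_r+\ell^n_\ell$, and $S_a=\inf_r \ell^n_r/r$ by Fekete's lemma. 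An infimum of upper semicontinuous functions is upper semicontinuous, which gives (i): the locus $\{S_a\geqslant c\}=\bigcap_r\{\ell^n_r\geqslant rc\}$ is closed. For $a\leqslant 0$, $S_a$ is identically zero because the support lies in $i\geqslant0$, so that case is trivial. For (iii), an intersection of infinitely many finitely generated closed conditions is not obviously finitely generated. Here I would use that on an affine open the ideals generated by the minors form an increasing chain of conditions. I would try to bound the number of $r$ needed using the fact that $S_a$ takes values in $\frac1{N}\mathbb Z\cap[0,C]$ locally, since slopes have bounded denominators in a family of fixed Betti number. I would then argue that finitely many $r$ suffice by a constructibility argument, where the stratification by Newton polygon and the domino numbers is finite by Katz's specialization. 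This finiteness is the point I expect to require the most work.

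\emph{Polygon statement (ii).} By Lemma~\ref{lem:support}, $\HW_n(X_{\bar x})\leqslant P$ holds exactly when the region $E$ lies in the epigraph of $P$. The endpoints are fixed by $b$ and the total slope $t_{N,n}$, which is locally constant by Lemma~\ref{lem:telescope}. The region is determined by the finitely many nontrivial $S_a$, with $0\leqslant a\leqslant 2\dim$. Since the values lie in a finite set, the locus is a finite union of finite intersections of the loci of (i), hence closed. Specialization monotonicity then follows from (i): if $y\in\overline{\{x\}}$, then $S_a(y)\geqslant S_a(x)$ for all $a$, and Lemma~\ref{lem:support} gives $\HW_n(X_{\bar y})\leqslant\HW_n(X_{\bar x})$.
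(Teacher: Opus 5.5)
Your argument for (i) and for the specialization inequality is the paper's. You pass to the perfection (a homeomorphism), get upper semicontinuity of $x\mapsto\ell^n(\calP^a_r,x)$ from Lemmas~\ref{lem:wittperfectness} and~\ref{lem:equalcharlength}, use subadditivity from Lemma~\ref{lem:wittfiltration} with Corollary~\ref{cor:geometricgrowth} to get $S_a=\inf_r\ell^n(\calP^a_r,x)/r$, and intersect closed sets.

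\textbf{The gap is (iii).} You leave it open, and the route you sketch would not close it.
\begin{itemize}
\item Knowing that $S_a$ takes only finitely many values locally does not make the decreasing chain $\bigcap_{r\leqslant R}\{x:\ell^n(\calP^a_r,x)\geqslant rc\}$ stabilize on a non-noetherian base.
\item A compactness argument in the constructible topology would need you to know beforehand that $\{S_a<c\}$ is quasi-compact, which is essentially (iii) itself.
\item The ``finite stratification by domino numbers'' you want does not come from Katz's theorem, which concerns Newton polygons only. Semicontinuity of domino numbers is Corollary~\ref{cor:dominospecialization}, which is deduced from this proposition, so that step is circular.
\end{itemize}

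The missing idea is to apply noetherian approximation to the whole problem, not merely to descend from the perfection.
\begin{itemize}
\item Affine-locally $f$ is proper and smooth, hence of finite presentation. So it is the base change of a proper smooth family over a finitely generated $\mathbb F_p$-algebra $A_0$.
\item The quantities involved depend only on the geometric fibers and are unchanged by extension of algebraically closed fields (Lemma~\ref{lem:wittperfectness} and Corollary~\ref{cor:geometricgrowth}). Hence each locus in $S$ is the preimage of the corresponding locus in $\operatorname{Spec}A_0$.
\item By (i) and (ii) applied to the model, that locus is closed, so it equals $V(I_0)$ with $I_0$ finitely generated because $A_0$ is noetherian.
\item Its preimage in $S$ is cut out by the images of finitely many generators of $I_0$.
\end{itemize}
No bound on the number of $r$ is needed.

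\textbf{A smaller point in (ii): you have the containment reversed.} $\HW_n(X_{\bar x})\leqslant P$ means the epigraph of $P$ over $[0,b]$ lies in $E$, not $E\subseteq\mathrm{epi}(P)$. By the supporting-line description, this is equivalent to
\[
S_a\{h_W^{i,n-i}(X_{\bar x})\}\geqslant c_a(P):=\sup_{u\in[0,b]}(au-P(u))\quad\text{for every }a\in\mathbb Z.
\]
So the locus is directly an intersection of loci from (i). Your detour through finiteness of values, finitely many relevant $a$, and local constancy of $t_{N,n}$ is unnecessary. That local constancy also does not follow from Lemma~\ref{lem:telescope}.
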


\begin{proof}
For $a\leqslant0$, the weighted sums vanish, so \textup{(i)} is immediate.
Fix $a\geqslant1$ and put $k_x=\overline{\kappa(x)}$. For $r\geqslant1$, set
\[
 \ell^n(\calP_r^a,x)=\ell_{W(k_x)}^n\bigl(\RG(X_{\bar x},\calQ^a_r(X_{\bar x}))\bigr).
\]
We first show that $x\mapsto\ell^n(\calP_r^a,x)$ is upper semicontinuous. Work affine locally on $S$ and replace its
coordinate ring by its perfection $B$; this does not change the underlying topological space.
Write $X$ also for its base change to $\operatorname{Spec}(B)$.
Lemma~\ref{lem:wittperfectness} shows that $\calP^a_{r,X/\operatorname{Spec}(B)}$
is perfect over $W(B)$, vanishes after inverting $p$, and computes the fiber complexes
by derived base change. Lemma~\ref{lem:equalcharlength} therefore gives the upper semicontinuity
of $x\mapsto\ell^n(\calP_r^a,x)$ on $\operatorname{Spec}(B)$, and hence on $S$.

Lemma~\ref{lem:wittfiltration} and the long exact sequence in cohomology give
$\ell^n(\calP_{r+\ell}^a,x)\leqslant \ell^n(\calP_r^a,x)+\ell^n(\calP_\ell^a,x)$; Frobenius twists preserve length. Together with
Corollary~\ref{cor:geometricgrowth}, this yields the exact formula
\begin{equation}\label{eq:cutinfimum}
 S_a\{h_W^{i,n-i}(X_{\bar x})\}
 =\lim_{r\to\infty}\frac{\ell^n(\calP_r^a,x)}r
 =\inf_{r\geqslant1}\frac{\ell^n(\calP_r^a,x)}r.
\end{equation}
Indeed, $\ell^n(\calP_{mr}^a,x)\leqslant m\ell^n(\calP_r^a,x)$, so the limit is at most $\ell^n(\calP_r^a,x)/r$ for every $r$.
Consequently,
\[
 \left\{x:S_a\{h_W^{i,n-i}(X_{\bar x})\}\geqslant c\right\}
 =\bigcap_{r\geqslant1}\{x:\ell^n(\calP_r^a,x)\geqslant rc\}
\]
is closed. 

For \textup{(ii)}, put $c_a(P)=\sup_{u\in[0,b]}(au-P(u))$. By the supporting-line definition,
the degree-$n$ Hodge--Witt polygon lies on or below $P$ precisely when
\[
 S_a\{h_W^{i,n-i}(X_{\bar x})\}\geqslant c_a(P)
 \qquad\text{for every }a\in\mathbb Z.
\]
Its locus is therefore an intersection of the closed subsets in \textup{(i)}. This argument also
applies when some Hodge--Witt numbers are negative.

Finally, affine locally on $S$, the family descends to a proper smooth family over a finitely
generated $\mathbb F_p$-algebra \cite[Tags~01ZM, 0C0C, 081F]{Stacks}. The quantities in the
statement are invariant under extension of algebraically closed fields, by
Lemma~\ref{lem:wittperfectness} and Corollary~\ref{cor:geometricgrowth}. Thus each locus is the
inverse image of the corresponding closed locus on this noetherian model. 
A finitely generated ideal defining the locus on the model pulls back to a finitely generated
ideal on $S$, as required.

For a specialization $x\leadsto y$, the closed loci in \textup{(i)} immediately give
\(\HW_n(X_{\bar y})\leqslant\HW_n(X_{\bar x})\). 
\end{proof}

Combining the weighted-sum comparison with Newton polygon semicontinuity gives the following
corollary.

\begin{corollary}[Semicontinuity of domino numbers]\label{cor:dominospecialization}
Let $S$ be a connected $\mathbb F_p$-scheme. 
Let $f:X\to S$ be a proper smooth morphism. For every $i,j\geqslant0$, the function
$x\mapsto T^{i,j}(X_{\bar x})$ is upper semicontinuous on $S$. Equivalently, for every integer
$m$, the locus
\[
 \{x\in S:T^{i,j}(X_{\bar x})\geqslant m\}
\]
is Zariski closed. Moreover, it is locally the zero locus of a finitely generated ideal. In particular, if $y$ is a
specialization of $x$, then $T^{i,j}(X_{\bar y})\geqslant T^{i,j}(X_{\bar x})$.
\end{corollary}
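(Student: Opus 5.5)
The plan is to express $T^{a-1,n-a+1}$ through quantities whose semicontinuity is already known, and to use Lemma~\ref{lem:telescope}. By that lemma, for every $a\in\mathbb Z$,
\[
 T^{a-1,n-a+1}(X_{\bar x})=S_a\{h_W^{i,n-i}(X_{\bar x})\}-\sum_{\lambda\leqslant a}(a-\lambda)m_\lambda(X_{\bar x}).
\]
Write $N_a(x)\coloneqq\sum_{\lambda\leqslant a}(a-\lambda)m_\lambda(X_{\bar x})$. Given $(i,j)$, set $n=i+j$ and $a=i+1$. The task then becomes showing that $x\mapsto S_a\{h_W\}(x)-N_a(x)$ is upper semicontinuous with closed superlevel sets.

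By Proposition~\ref{prop:hwclosed}(i), the first term is upper semicontinuous. The Newton term is $N_a(x)=\sup_{u}(au-\operatorname{Newt}_n(X_{\bar x})(u))$. Since Newton polygons move up under specialization (Katz, Crew), $N_a$ can only decrease under specialization. The standard form of Grothendieck--Katz gives more: the loci $\{x:\operatorname{Newt}_n\geqslant P\}$ are closed. Hence $N_a$ is lower semicontinuous, i.e.\ $\{N_a\leqslant c\}$ is closed. Taking the difference gives $T\geqslant$ its generic value on specializations, which is the last assertion.

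For closedness of $\{T\geqslant m\}$, I would use that $N_a$ takes only finitely many values on $S$. Newton polygons have fixed endpoints and breakpoints with denominators bounded by $b_n$, so there are finitely many possibilities. I would then write
\[
 \{T\geqslant m\}=\bigcup_{c}\bigl(\{S_a\{h_W\}\geqslant m+c\}\cap\{N_a\leqslant c\}\bigr),
\]
where $c$ runs over these finitely many values. This is a finite union of closed sets, hence closed. Here one uses that $N_a=c'$ with $c'\leqslant c$ implies $S_a\geqslant m+c\geqslant m+c'$, so the union is contained in $\{T\geqslant m\}$, and conversely each point lies in the term with $c=N_a(x)$.

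The main obstacle is the finite-type statement, namely that the locus is locally cut out by a finitely generated ideal. The $S_a$ loci have this property by Proposition~\ref{prop:hwclosed}(iii). For the Newton loci, I would again descend to a noetherian model, as in the proof of Proposition~\ref{prop:hwclosed}, and invoke Katz's theorem there, so that these closed sets are pulled back from a noetherian scheme. Finite unions and intersections of such loci are again defined by finitely generated ideals, using products and sums of ideals. I would also need to check that Newton polygons of geometric fibers are insensitive to the choice of algebraically closed field, so that the loci on $S$ really are inverse images of the loci on the model.
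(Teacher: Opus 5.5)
Your proposal is correct and is essentially the paper's proof. It uses the same identity $T^{i,j}=S_a\{h_W^{i,n-i}\}-N_a$ with $a=i+1$ from Lemma~\ref{lem:telescope}, closedness of $\{S_a\geqslant c\}$ from Proposition~\ref{prop:hwclosed}, closedness of $\{N_a\leqslant c\}$ from Newton-polygon semicontinuity, and the same finite-presentation descent for the ideal-theoretic claim. The only difference is the last point-set step: the paper shows the complement is open as $\bigcup_{q\in\mathbb Q}\bigl(\{S_a<q\}\cap\{N_a>q-m\}\bigr)$, whereas you write $\{T\geqslant m\}$ as a finite union over the values of $N_a$. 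Your finite union is valid because these values are integers in $[0,ab_n]$, so you do not even need constancy of the Newton endpoint. It also makes the finitely-generated-ideal statement immediate via products and sums of ideals.
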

\begin{proof}
As in the proof of Proposition~\ref{prop:hwclosed}, finite-presentation descent reduces us to an
affine base of finite type over $\mathbb F_p$. Newton polygon semicontinuity applies over this
base, including when it is singular \cite[Theorem~2.1]{CrewSpecialization}. 
In particular, for crystalline Newton polygons in proper smooth families, 
the locus where the degree-$n$ Newton polygon lies on or above 
a fixed polygon is Zariski closed.

Put $n=i+j$ and $a=i+1$. Write $m_\lambda(x)$ for the degree-$n$ Newton slope multiplicities and
set
\[
 \nu_a(x)=\sum_{\lambda<a}(a-\lambda)m_\lambda(x).
\]
Locally on $S$, the Betti number $b_n=b$ is constant. If $P_x$ is the Newton polygon function 
at the point \(x\), then \(\nu_a(x)=\sup_{0\leqslant u\leqslant b}(au-P_x(u)).\) 
Thus $\{x:\nu_a(x)\leqslant c\}$ is the closed locus where $P_x$ lies on or above the line
$u\mapsto au-c$. Lemma~\ref{eq:hinge} gives
\(T^{i,j}(X_{\bar x})=S_a\{h_W^{i,n-i}(X_{\bar x})\}-\nu_a(x).\)
So by Proposition~\ref{prop:hwclosed}, the complement
\[
\{S_a-\nu_a<m\}
=
\bigcup_{q\in\mathbb Q}
\bigl(\{S_a<q\}\cap\{\nu_a>q-m\}\bigr),
\]
is open, so the locus \(\{S_a-\nu_a\geqslant m\}\) is closed. Finally, the same finite-presentation
descent argument as in Proposition~\ref{prop:hwclosed} shows that these loci are defined by finitely
generated ideals.
\end{proof}

For surfaces, this recovers Crew's upper semicontinuity theorem for $T^{0,2}$
\cite[Theorem~7]{Crew}.

\begin{corollary}[Openness of the Hodge--Witt locus]\label{cor:hwopen}
For a family $f:X\to S$ as in Corollary~\ref{cor:dominospecialization}, the subset
\[
 S_{\mathrm{HW}}=\{x\in S:X_{\bar x}\text{ is Hodge--Witt}\}
\]
is Zariski open in $S$.
\end{corollary}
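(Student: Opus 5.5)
The plan is to write the Hodge--Witt locus as an intersection of finitely many open conditions on domino numbers. By Ekedahl's theory, a proper smooth variety $Y$ over an algebraically closed field is Hodge--Witt precisely when every cohomology group $\mathrm{H}^j(Y,W\Omega^i)$ is finitely generated over $W$. This holds if and only if all dominos of the differentials $d:\mathrm{H}^j(Y,W\Omega^i)\to\mathrm{H}^j(Y,W\Omega^{i+1})$ vanish, that is, if and only if $T^{i,j}(Y)=0$ for all $i,j\geqslant0$ \cite[IV]{Ekedahl}, \cite{IllusieFDK}. Since $T^{i,j}\geqslant0$, we get
\[
 S_{\mathrm{HW}}=\bigcap_{i,j\geqslant0}\{x\in S:T^{i,j}(X_{\bar x})<1\}.
\]
By Corollary~\ref{cor:dominospecialization} with $m=1$, each set on the right is the complement of a Zariski closed subset, and hence is open.

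The remaining issue is that the intersection is indexed by infinitely many pairs $(i,j)$. We need it to be effectively finite, at least locally on $S$. We work on an affine open $U\subseteq S$ over which $f$ has fibers of relative dimension at most $d$; this is possible because the relative dimension of a smooth morphism is locally constant. On $U$, $W\Omega^i_{X_{\bar x}}=0$ for $i>d$, and $\mathrm{H}^j(X_{\bar x},W\Omega^i)=0$ for $j>d$, since the étale cohomological dimension of quasi-coherent-type sheaves is bounded by the dimension (we pass to the limit from $W_n\Omega^i$ using $R\lim$, which adds at most one degree). Therefore $T^{i,j}(X_{\bar x})=0$ unless $0\leqslant i,j\leqslant d+1$. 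It follows that $S_{\mathrm{HW}}\cap U$ is a finite intersection of open subsets of $U$, and so it is open. Openness is local on $S$, so $S_{\mathrm{HW}}$ is open.

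The main point to check carefully is the characterization ``Hodge--Witt $\iff$ all $T^{i,j}=0$.'' We also need the domino number of the geometric fiber to be the correct invariant. It is, because $T^{i,j}$ is invariant under algebraically closed field extension, as used in Proposition~\ref{prop:hwclosed}. The characterization is standard: the domino of $d$ on $\mathrm{H}^j(W\Omega^i)$ measures exactly the failure of finite generation, and its dimension $T^{i,j}$ is zero if and only if the domino is zero. So I expect no real obstacle beyond citing it.
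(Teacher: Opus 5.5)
Your proposal is correct and follows essentially the same argument as the paper. You characterize Hodge--Witt fibers by the vanishing of all domino numbers (the paper cites \cite[IV, \S4.6]{IR}), bound the relevant indices locally on $S$ by the relative dimension, and write $S_{\mathrm{HW}}$ as a finite intersection of the open sets $\{x: T^{i,j}(X_{\bar x})\leqslant 0\}$ given by Corollary~\ref{cor:dominospecialization}. Your range $0\leqslant i,j\leqslant d+1$ is looser than the paper's $0\leqslant i,j\leqslant d$, but this is harmless.
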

\begin{proof}
Recall that a smooth proper variety $X$ over a perfect field $k$ is Hodge--Witt if all the groups
$\mathrm{H}^j(X,W\Omega_X^i)$ are finitely generated over $W(k)$; equivalently, all its domino
numbers vanish \cite[IV, \S4.6]{IR}. Locally on $S$, the fiber dimensions have a common bound $d$,
so only the indices $0\leqslant i,j\leqslant d$ can contribute. Since domino numbers are
nonnegative integers, Corollary~\ref{cor:dominospecialization} gives
\[
 S_{\mathrm{HW}}
 =\bigcap_{0\leqslant i,j\leqslant d}
   \{x\in S:T^{i,j}(X_{\bar x})\leqslant 0\},
\]
a finite intersection of open subsets.
\end{proof}

Strict jumps of Hodge--Witt polygons seem to be common in characteristic \(p\).
By Ekedahl's K\"unneth formula, products of two Mazur--Ogus varieties and products of two
non-Mazur--Ogus varieties can behave very differently: the former are again Mazur--Ogus,
with the expected Hodge--Witt numbers, whereas the latter often, in Illusie's words,
``burst into many dominos''. A similar contrast can occur when taking the product of a
non-Mazur--Ogus variety \(X\) with an ordinary or a supersingular elliptic curve: the ordinary
factor contributes shifts of the existing cohomology modules, whereas the supersingular factor
can produce additional domino contributions. For example, consider the surface $S$ constructed below, whose Picard scheme has a local-local
part $\alpha_p$. Its Hodge--Witt cohomology \(\HH^2(S,WO_S)\) then has a finite torsion part
\(\mathbb{D}(\alpha_p)\) on which \(F=V=0\). Its product with a supersingular elliptic curve
has more dominos than its product with an ordinary elliptic curve. We compute explicit
examples in Section~\ref{subsec:charpexamples}.

\subsection{Examples of strict specialization}\label{subsec:charpexamples}

We examine three examples. The first is the product construction described above, involving a
surface with non-reduced \(\mathrm{Pic}(S)\) and a family of elliptic curves. It works in every
positive characteristic. The second revisits Ekedahl's computation for the self-product of a
family of Igusa surfaces. The third uses relative Godeaux--Serre approximation to construct
fivefolds with jumping Hodge--Witt polygons.

All examples are over an algebraically closed field \(k\). 

\subsubsection{The family \(S\times_k \mathcal{E}\)}\label{subsubsec:S*E}

The first example is \(S\times_k\mathcal E\), where \(S\) is a surface with
\(\mathrm{Pic}^\circ(S)/\mathrm{Pic}^\circ(S)_{\mathrm{red}}\cong\alpha_p\) and \(\mathcal E\)
is a family of elliptic curves.

For $p>2$, let $C$ be the smooth projective model of \(y^2=x^p-x\), and let
$G=\mathbb Z/p$ act on $C$ with generator \(\beta(x,y)=(x+1,y)\).
For $p=2$, take the supersingular elliptic curve \(y^2+y=x^3+x^2\), with
$G=\mathbb Z/4$ generated by \(\beta(x,y)=(x+1,y+x+1)\).
In both cases, $\beta$ fixes the point $c_0$ at infinity and
$C/\langle\beta\rangle\simeq\mathbb P^1_k$.

Choose an ordinary elliptic curve $A/k$ with a point $a\in A(k)$ of the same order as
$\beta$. The diagonal action of $G$ on $C\times A$, given by $\beta$ on $C$ and translation
by $a$ on $A$, is free. Set \(S\coloneqq(C\times A)/G\). For $p=2,3$, these are the
hyperelliptic examples of \cite[\S\S1.2--1.3]{Lang95}.
Put $B=A/\langle a\rangle$. Projection induces $f:S\to B$, and the fixed point $c_0$
defines a section $s:B\to S$.

\begin{lemma}\label{Pic(S)}
  \(\mathrm{Pic}^\circ(S)\cong B^\vee \times \alpha_p\) and \(\mathrm{NS}(S)\) is free. 
\end{lemma}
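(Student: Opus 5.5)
The plan is to compute $\mathrm{Pic}^\circ(S)$ through the Albanese fibration $f:S\to B$ and its section $s$, and to control $\mathrm{NS}(S)$ through the étale cover $\pi:C\times A\to S$.

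First the reduced part. Since $C/G\simeq\mathbb P^1$, every fiber of $f$ is a copy of $C$, and $f_*\mathcal O_S=\mathcal O_B$. The section $s$ splits $f^*:\mathrm{Pic}(B)\to\mathrm{Pic}(S)$, so $B^\vee=\mathrm{Pic}^\circ(B)$ is a direct factor of $\mathrm{Pic}^\circ(S)$. Next compare $q(S)=h^{0,1}(S)$ with $\dim\mathrm{Pic}^\circ(S)_{\mathrm{red}}=\tfrac12 b_1(S)$. For $b_1$, use $\HH^1_{\mathrm{et}}(S,\mathbb Q_\ell)=(\HH^1(C)\oplus\HH^1(A))^G$; the invariants of $\HH^1(C)$ vanish because $C/G=\mathbb P^1$, and $G$ acts trivially on $\HH^1(A)$ since it acts by translation, so $b_1(S)=2$. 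For $h^{0,1}$, compute $\HH^1(S,\mathcal O_S)=(\HH^1(C\times A,\mathcal O))^G$ using the Hochschild--Serre spectral sequence for the free $G$-action. This is where the first difficulty sits: $G$ is a $p$-group in characteristic $p$, so taking invariants is not exact, and $\HH^1(G,\HH^0(\mathcal O))=\HH^1(G,k)\neq0$ also contributes.

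I would instead compute $\HH^1(S,\mathcal O_S)$ from the Leray spectral sequence of $f$:
\[
0\to\HH^1(B,\mathcal O_B)\to\HH^1(S,\mathcal O_S)\to\HH^0(B,R^1f_*\mathcal O_S)\to\HH^2(B,\mathcal O_B)=0.
\]
The sheaf $R^1f_*\mathcal O_S$ is the vector bundle obtained by twisting $\HH^1(C,\mathcal O_C)\otimes\mathcal O_A$ by the $G$-action, i.e.\ by the étale $G$-torsor $A\to B$. Its global sections are $\bigl(\HH^1(C,\mathcal O_C)\otimes\HH^0(A,\mathcal O_A)\bigr)^G=\HH^1(C,\mathcal O_C)^G$. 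The key input is that $\dim\HH^1(C,\mathcal O_C)^{G}=1$. For $p>2$, use the explicit Čech basis of $\HH^1$ of the Artin--Schreier curve, namely the classes $y/x^i$ in the standard description, on which $\beta$ acts unipotently with a single Jordan block. For $p=2$, the curve is a supersingular elliptic curve, $\HH^1(C,\mathcal O_C)$ is one-dimensional, and $G$ acts trivially on it. This gives $q(S)=2>1=\dim B^\vee$.

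So the tangent space of $\mathrm{Pic}^\circ(S)$ exceeds that of $B^\vee$ by one. The infinitesimal part $\mathrm{Pic}^\circ(S)/B^\vee$ is then a local group scheme with one-dimensional tangent space. To identify it as $\alpha_p$, I would use that $F$ acts on the extra class. Since $C$ is supersingular for $p=2$, and since for $p>2$ the invariant class is the bottom of the Jordan block, on which Frobenius vanishes, $F=0$ on the quotient $\HH^1(S,\mathcal O_S)/\HH^1(B,\mathcal O_B)$. A local group scheme whose tangent space is one-dimensional with zero Frobenius is of height one, namely $\alpha_p$, provided its order is $p$. For the order, I would bound it by computing $\HH^1_{\mathrm{fl}}(S,\alpha_p)$, or equivalently show that $\mathrm{Pic}^\circ(S)[F]$ has order $p\cdot\#B^\vee[F]$ using the exact sequence $0\to\alpha_p\to\mathbb G_a\xrightarrow{F}\mathbb G_a\to0$. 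The section then splits the extension, giving the product. Pinning the quotient down to exactly $\alpha_p$, rather than a larger local group scheme, is the main obstacle, and I would treat it via this Frobenius kernel computation.

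For $\mathrm{NS}(S)$ free: torsion in $\mathrm{NS}$ corresponds to finite étale (and $\mu_n$-type) coverings, i.e.\ to torsion in $\HH^2$. I would show $\mathrm{NS}(S)_{\mathrm{tor}}=\mathrm{Pic}^\tau/\mathrm{Pic}^\circ$ is trivial by checking that every torsion line bundle is algebraically equivalent to zero. Concretely, $\mathrm{Pic}^\tau(S)[n]$ is dual to $\pi_1^{\mathrm{ab}}(S)/n$ for $p\nmid n$, and the $p$-part is dual to the $p$-torsion of the Albanese. Moreover, $\pi_1(S)$ is an extension of $\pi_1(B)$ by the image of $\pi_1(C)$, whose $G$-coinvariants vanish because $C/G=\mathbb P^1$. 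Hence $\pi_1^{\mathrm{ab}}(S)=\pi_1^{\mathrm{ab}}(B)$ up to the contribution of $G$ itself, and that contribution is absorbed because the torsor $A\to B$ is already pulled back from $B$. Every torsion class therefore comes from $B^\vee$, which is connected, so $\mathrm{NS}(S)$ is torsion-free.
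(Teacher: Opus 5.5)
\textbf{There is a genuine gap: the order of the infinitesimal part is never bounded.}

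Your reduction is sound up to a point. The section splits off \(f^*B^\vee\), and \(b_1(S)=2\) together with your Leray computation \(h^{0,1}(S)=2\) gives \(\mathrm{Pic}^\circ(S)\cong B^\vee\times Q\), with \(Q=\ker(s^*)\) infinitesimal and \(\mathrm{Lie}\,Q\) one-dimensional. Frobenius preserves the invariant line in \(\mathrm{H}^1(C,\mathcal O_C)\) and is nilpotent there (\(p\)-rank zero), hence zero. So the \(p\)-map on \(\mathrm{Lie}\,Q\) vanishes and \(Q[F]\cong\alpha_p\).

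The problem is the order of \(Q\). A zero \(p\)-map on the Lie algebra does not make the relative Frobenius of \(Q\) vanish. For example, \(\alpha_{p^2}\), or \(E[p]\) for a supersingular elliptic curve \(E\), is infinitesimal of order \(p^2\) with one-dimensional tangent space and zero \(p\)-map. Your proposed remedy cannot detect this. The group \(\mathrm{H}^1_{\mathrm{fl}}(S,\alpha_p)=\ker\bigl(F\mid\mathrm{H}^1(S,\mathcal O_S)\bigr)\cong\mathrm{Hom}(\alpha_p,\mathrm{Pic}_S)\), equivalently \(\mathrm{Pic}^\circ(S)[F]\), only sees \(Q[F]\). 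You already know \(Q[F]\) has order \(p\) from the tangent space, so this does not exclude \(Q\cong\alpha_{p^2}\).

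What is missing is an upper bound \(|Q|\leqslant p\), and this is where the paper's proof does its work. Put \(J=\mathrm{Pic}^0(C)\). Jensen's sequence \(0\to G^\vee\to\mathrm{Pic}^0(S)\to A^\vee\times J^G\to0\), compared with \(0\to G^\vee\to B^\vee\to A^\vee\to0\) by restriction to \(A\times\{c_0\}\), identifies \(\ker(s^*)\) with \(J^G=\ker(1-\beta^*)\). One has \(\deg(1-\beta^*)=p\), because \(\beta^*\) has characteristic polynomial \(1+T+\cdots+T^{p-1}\) on \(V_\ell J\); for \(p=2\), use \((1-\beta)^\vee(1-\beta)=[2]\). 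With this bound your Lie-algebra argument would finish. The paper instead identifies the order-\(p\) kernel as \(\alpha_p\) using that \(J\) and its dual have \(p\)-rank zero.

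\textbf{The N\'eron--Severi part has a similar hole at \(p\).} Your coinvariant argument does give \(\pi_1^{\mathrm{ab}}(S)\cong\pi_1^{\mathrm{ab}}(B)\), and so handles torsion prime to \(p\). Note that what makes this work is that \(\det(1-\beta^*)=p\) is an \(\ell\)-adic unit, not merely \(C/G\simeq\mathbb P^1\). But a line bundle of order \(p^m\) corresponds to a \(\mu_{p^m}\)-torsor, which is not an \'etale cover and is invisible to \(\pi_1^{\mathrm{et}}\). The assertion that the \(p\)-part is dual to the \(p\)-torsion of the Albanese is not a valid substitute.

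The paper argues uniformly by pulling back along \(\pi\):
\begin{itemize}
\item \(\mathrm{NS}(C\times A)\) is torsion-free, so \(\pi^*L\in J(k)\times A^\vee(k)\).
\item The \(J\)-component of \(\pi^*L\) lies in \(J^G(k)=0\).
\item Twisting \(L\) by \(f^*\) of a point of \(B^\vee(k)\), which surjects onto \(A^\vee(k)\), makes \(\pi^*L\) trivial.
\item Finally, \(\ker(\pi^*)=\mathrm{Hom}(G,k^\times)=0\).
\end{itemize}
To patch only the \(p\)-part within your approach, first twist so that \(L^{p^m}\cong\mathcal O\). Then run the same pullback, and note that the \(J\)-component is a \(p\)-power torsion point of a Jacobian of \(p\)-rank zero, hence zero.
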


\begin{proof}
Put $J=\operatorname{Pic}^0(C)$. We first show that the scheme-theoretic fixed subgroup
$J^G$ is isomorphic to $\alpha_p$. For $p>2$, the Artin--Schreier presentation $x^p-x=y^2$ has a single
pole of order $2$, so $C$ has genus $(p-1)/2$ and $p$-rank zero
\cite[Lemma~2.6]{PriesZhu}. Since $C/G\simeq\mathbb P^1$, the characteristic polynomial of
$\beta^*$ on the rational Tate module of $J$ is $1+T+\cdots+T^{p-1}$. Hence
$\deg(1-\beta^*)=p$. As $J$ and its dual have $p$-rank zero, this order-$p$ kernel can only be $\alpha_p$.
For $p=2$, using the canonical principal polarization, the identity $\beta^2=[-1]$ gives
$(1-\beta)^\vee(1-\beta)=[2]$, whence $\deg(1-\beta)=2$. Supersingularity of $C$
again identifies the kernel with $\alpha_2$.

Jensen's quotient sequence \cite[Theorem~2.1]{Jensen} gives
\(
 0\to G^\vee\to\operatorname{Pic}^0(S)
 \to A^\vee\times J^G\to0.
\)
Here $G^\vee=\mu_p$ (or \(\mu_4\)) is connected, so the quotient sequence restricts to identity components.
Restriction along $A\times\{c_0\}$ compares it with
\( 0\to G^\vee\to B^\vee\to A^\vee\to 0;\)
the maps on the outer terms are the identity and the first projection. Thus
$\ker(s^*)\simeq J^G$. Since $s^*f^*=1$, we obtain
\(\operatorname{Pic}^0(S)\simeq B^\vee\times\alpha_p.\)

Write $\pi:A\times C\to S$ for the quotient map and $f:S\to B=A/G$ for the induced
fibration. We next check that $\operatorname{NS}(S)$ is torsion-free.
If a line bundle $L$ represents a torsion class in $\operatorname{NS}(S)$, then
$\pi^*L$ is algebraically trivial, since $\operatorname{NS}(A\times C)$ is torsion-free.
Its component in $J(C)(k)$ is $G$-invariant and therefore zero, since
$J(C)^G\simeq\alpha_p$. The isogeny $B^\vee\to A^\vee$ is surjective on $k$-points,
so, after tensoring $L$ with the inverse of a line bundle pulled back from
$\operatorname{Pic}^0(B)$, we may assume that $\pi^*L$ is trivial.
The kernel of pullback along $\pi$ is
$\operatorname{Hom}(G,k^\times)=0$, since $G$ has $p$-power order.
Thus $L$ is algebraically trivial, so $\operatorname{NS}(S)$ is torsion-free and hence free
by finite generation.
\end{proof}

Translation acts trivially on rational cohomology, and the $G$-invariant part of
$\mathrm H^1_{\cris}(C/W)[1/p]$ vanishes. Rational crystalline descent and K\"unneth therefore
give the following Betti numbers and slopes:
\begin{equation}\label{eq:quotientslopes}
 \begin{array}{c|ccccc}
  n&0&1&2&3&4\\ \hline
  b_n(S)&1&2&2&2&1\\
  \text{slopes}&0&(0,1)&(1,1)&(1,2)&2.
 \end{array}
\end{equation}
The finite \'{e}tale cover gives $|G|\chi(\mathcal O_S)=\chi(\mathcal O_{A\times C})=0$.
By \ref{Pic(S)}, $h^1(\mathcal O_S)=2$, so $h^2(\mathcal O_S)=1$. 
The long exact sequence shows that \(\HH^2(S,WO_S)/V\cong k\) and
by Ekedahl \cite[III, Proposition~8.1(ii), (8.1.3)]{EkedahlII}, \(0\rightarrow \mathbb{D}\alpha_p^\vee \to \HH^2(S,WO_S)\to \mathrm{TC}(\widehat{\mathrm{Br}}(S))\to 0\).
Counting the dimension of \(\HH^2(S,WO_S)/V\) shows that \(\mathrm H^2(S,W\mathcal O_S)\simeq \mathbb{D}(\alpha_p^\vee)=k\) where \(F=V=0.\)
The only possible domino of a surface is $T^{0,2}$; the finite length of \(\HH^2(S,WO_S)\) forces
this number to vanish. Thus $S$ is Hodge--Witt.

\begin{proposition}\label{prop:quotient-hodge-witt}
Let $S$ be the quotient surface constructed above, over an algebraically closed field $k$,
and put \(M=R\Gamma(S,W\Omega_S^\bullet)\in D_c^b(R).\)
Its Hodge--Witt cohomology is given by
\begin{equation}\label{eq:quotient-hodge-witt-table}
 \begin{array}{c|ccc}
  \mathrm H^j(S,W\Omega_S^i)&i=0&i=1&i=2\\ \hline
  j=2&k&W\oplus k&W\\
  j=1&W&W^{\oplus2}&W\\
  j=0&W&W&0.
 \end{array}
\end{equation}
On each copy of $W$, $F=\sigma$ and $V=p\sigma^{-1}$, and each copy of $k$ has $F=V=0$.
Moreover, there is an isomorphism
\(M\simeq\bigoplus_{j=0}^{2}\mathrm H^j(M)[-j]\) in \(D_c^b(R),\)
where $\mathrm H^j$ denotes cohomology in the external degree.
\end{proposition}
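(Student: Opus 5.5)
The computation proceeds column by column: first the slope (type-I) part of each $\mathrm H^j(S,W\Omega^i_S)$ from the crystalline data in \eqref{eq:quotientslopes}, then the torsion from Lemma~\ref{Pic(S)} and the fact already established that $\mathrm H^2(S,W\mathcal O_S)\simeq k$ with $F=V=0$. Since $S$ is Hodge--Witt, every $\mathrm H^j(S,W\Omega^i_S)$ is finitely generated over $W$. The slope spectral sequence therefore degenerates rationally. The slope-$[i,i+1)$ part of $\mathrm H^n_{\cris}(S/W)[1/p]$ has rank $\rk\mathrm H^{n-i}(S,W\Omega^i)$ modulo torsion, and the slopes of that module are $\lambda-i$. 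From the table \eqref{eq:quotientslopes}:
\begin{itemize}
\item $n=1$ has slopes $(0,1)$, giving rank $1$ in $\mathrm H^1(W\mathcal O)$ and rank $1$ in $\mathrm H^0(W\Omega^1)$;
\item $n=2$ has slopes $(1,1)$, giving rank $2$ in $\mathrm H^1(W\Omega^1)$ and rank $0$ in $\mathrm H^2(W\mathcal O)$;
\item $n=3$ has slopes $(1,2)$, giving rank $1$ each in $\mathrm H^2(W\Omega^1)$ and $\mathrm H^1(W\Omega^2)$;
\item $n=0,4$ give $W$ in $\mathrm H^0(W\mathcal O)$ and $\mathrm H^2(W\Omega^2)$.
\end{itemize}
All slopes are integral, so each free part is a sum of copies of $W$ with $F=\sigma$ in the normalized sense and $V=p\sigma^{-1}$, or the reverse after the internal shift convention. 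Over algebraically closed $k$, Dieudonn\'e--Manin for integral slopes makes each such free part a direct sum of the standard unit-root objects.

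The torsion is controlled as follows. By \cite[II]{IllusieDRW}, torsion in $\mathrm H^\ast(W\mathcal O)$ and $\mathrm H^\ast(W\Omega^1)$ comes from $\mathrm{Pic}$ and $\mathrm{NS}$ and from the local-local part of the Picard scheme. $\mathrm H^1(S,W\mathcal O_S)$ is the Dieudonn\'e module of the $p$-divisible part of $\mathrm{Pic}$, which is torsion-free; the $\alpha_p$ appears only in $\mathrm H^2(W\mathcal O)$, as already shown. Freeness of $\mathrm{NS}(S)$ from Lemma~\ref{Pic(S)} gives torsion-free $\mathrm H^2_{\cris}$ apart from the contribution of $\mathbb D(\alpha_p)$. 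In the slope spectral sequence, the map $d:\mathrm H^2(W\mathcal O)\to\mathrm H^2(W\Omega^1)$ must carry $k$ isomorphically onto a torsion $k$ in $\mathrm H^2(W\Omega^1)$, because $d$ on a finite-length module with $F=V=0$ is injective by $FdV=d$ and duality. This yields the $W\oplus k$ entry in bidegree $(1,2)$. Ekedahl duality, $\mathrm H^j(W\Omega^i)$ against $\mathrm H^{2-j}(W\Omega^{2-i})$ with the torsion shift, then confirms that no other torsion appears. In particular $\mathrm H^0(W\Omega^2)=0$ is dual to the vanishing torsion part of $\mathrm H^2(W\mathcal O)$'s free part, and $\mathrm H^1(W\Omega^2)=W$.

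For the splitting $M\simeq\bigoplus_j \mathrm H^j(M)[-j]$ in $D_c^b(R)$, the obstruction classes lie in $\operatorname{Ext}^{q}_R(\mathrm H^j,\mathrm H^{j'})$ with $q\geqslant2$. Graded $R$-modules have small Ext dimension: coherent modules have projective dimension at most $2$ in the relevant sense of \cite[I]{Ekedahl}. The only potentially nonzero obstruction is the one from $\mathrm H^0$ to $\mathrm H^2$, which is the $k$-invariant $\operatorname{Ext}^3$ or $\operatorname{Ext}^2$. I would kill it using the section $s$ and the fibration $f$. Pullback along $f$ and pushforward along $s$ split off $\RG(B,W\Omega^\bullet_B)$, which is formal because $B$ is an elliptic curve. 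The remaining complement has $\mathrm H^0$ equal to $0$ in internal degree $0$, and for internal degree $1$ the relevant Ext from a $W$ (type I) to $k$ vanishes by a direct resolution computation.

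The main obstacle is this formality step, specifically controlling $\operatorname{Ext}^2_R$ between the unit-root pieces and the $k$ with $F=V=0$ (equivalently $d$-compatible extensions). The torsion identification in bidegree $(1,2)$ is the second place where care is needed.
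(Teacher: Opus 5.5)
The rank computation from the slopes agrees with the paper's. There are, however, two genuine gaps, one in the torsion analysis and one in the splitting.

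\textbf{The torsion in $\mathrm H^2(S,W\Omega^1_S)$.} You produce the summand $k$ by claiming that $d\colon\mathrm H^2(S,W\mathcal O_S)\to\mathrm H^2(S,W\Omega^1_S)$ is injective ``by $FdV=d$''. The relation says the opposite. On $\mathrm H^2(S,W\mathcal O_S)=k$ one has $V=0$, so $dx=F\,d(Vx)=0$. More generally, $S$ is Hodge--Witt, so the slope spectral sequence degenerates at $E_1$ and every such $d$ vanishes.

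The torsion has to come from duality instead. The paper uses Ekedahl's duality for Hodge--Witt surfaces, which identifies the $F$-nilpotent torsion of $\mathrm H^2(S,W\Omega^1_S)$ with the Dieudonn\'e dual of the $F$-nilpotent torsion of $\mathrm H^2(S,W\mathcal O_S)=k$. It rules out $F$-bijective torsion because $\mathrm{NS}(S)$ is torsion-free and $\mathrm{Pic}^0(S)/\mathrm{Pic}^0(S)_{\mathrm{red}}\cong\alpha_p$ has no multiplicative part.

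You also need, and do not address, the splitting of $0\to k\to H\to W\to 0$ as Dieudonn\'e modules, where $H=\mathrm H^2(S,W\Omega^1_S)$. The submodule $FH$ works as a complement. It maps isomorphically onto the quotient because $F$ kills $k$ and is bijective on $W$. It is stable under $F$ and $V$ because $V(FH)=pH=F(VH)\subseteq FH$.

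\textbf{The decomposition $M\simeq\bigoplus_j\mathrm H^j(M)[-j]$.} The obstruction classes lie in three groups, each contravariant in the higher cohomology:
$\operatorname{Ext}^2_R(\mathrm H^1(M),\mathrm H^0(M))$, $\operatorname{Ext}^2_R(\mathrm H^2(M),\mathrm H^1(M))$ and $\operatorname{Ext}^3_R(\mathrm H^2(M),\mathrm H^0(M))$.
A projective-dimension bound would at most kill the $\operatorname{Ext}^3$ term, not the two $\operatorname{Ext}^2$ terms. So your assertion that only the $\mathrm H^0$--$\mathrm H^2$ class can be nonzero is unsupported, and an ``Ext from $W$ to $k$'' has the wrong variance.

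Your idea of splitting off $\RG(B,W\Omega^\bullet_B)$ via $s^*f^*=1$ is sound. It in fact absorbs all of $\mathrm H^0(M)=W\oplus W(-1)$, leaving a summand $N$ with $\mathrm H^0(N)=0$, $\mathrm H^1(N)=W(-1)\oplus W(-2)$ and $\mathrm H^2(N)=\mathrm H^2(M)$. But the single remaining obstruction then lies in $\operatorname{Ext}^2_R(\mathrm H^2(N),\mathrm H^1(N))$. This is a sum of groups $\operatorname{Ext}^2_R(k(-a),W(-b))$ and $\operatorname{Ext}^2_R(W(-a),W(-b))$.

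Their vanishing is exactly the input the paper takes from Ekedahl's completed resolutions: $\operatorname{Ext}^q_R(W(-a),W(-b))=\operatorname{Ext}^q_R(k(-a),W(-b))=0$ for $q\geqslant2$. With that input the reduction via $B$ is unnecessary. Without it, your argument does not establish the splitting.
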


\begin{proof} $S$ is Hodge--Witt, and its slope spectral sequence
degenerates at $E_1$. The rational slope decomposition
\(\mathrm H^j(S,W\Omega_S^i)[1/p]\simeq \mathrm H^{i+j}_{\mathrm{cris}}(S/W)[1/p]_{[i,i+1)}\)
and the slopes in \eqref{eq:quotientslopes} determine the free ranks in
\eqref{eq:quotient-hodge-witt-table}. Thus the torsion-free quotients are
unit-root Dieudonn\'e modules, and are isomorphic to sums of the standard module $W$
because $k$ is algebraically closed. It remains to determine their torsion. 
Ekedahl's duality \cite[IV, Theorem~3.5]{EkedahlI}, in its form for Hodge--Witt surfaces, 
identifies the \(F\)-nilpotent torsion in 
$\mathrm H^2(S,W\Omega_S^1)$ with the Dieudonn\'e dual of the \(F\)-nilpotent torsion part of
$\mathrm H^2(S,W\mathcal O_S)=k$, which is again $k$, with $F=V=0$. 
There is no other torsion: the F-bijective torsion of \(\HH^2(S,W\Omega^1_S)\) 
and \(\HH^1(S,W\Omega^1_S)\) comes from
$\operatorname{NS}(S)[p^\infty]$ and F-bijective torsion of \(\HH^2(S,WO_S)\) 
and \(\HH^1(S,W\Omega^2_S)\) comes from the multiplicative part of
$\operatorname{Pic}^0(S)/\operatorname{Pic}^0(S)_{\mathrm{red}}$, both of which vanish.
The only remaining extension \( 0\to k\to \mathrm H^2(S,W\Omega_S^1)\to W\to0\)
also splits as a Dieudonn\'e module since $F$ kills its torsion and induces an isomorphism
on its torsion-free quotient, so its image maps isomorphically to that quotient
and is stable under both $F$ and $V$. This proves the table.

Regard $W$ and $k$ as graded $R$-modules concentrated in internal degree zero;
then $W(-a)$ and $k(-a)$ are concentrated in internal degree $a$.
The completed resolutions in \cite[III, Lemma~1.5 and Corollary~1.5.4]{EkedahlII}
give, for all integers $a,b$,
\( \operatorname{Ext}^q_R(W(-a),W(-b))=0\) and \(\operatorname{Ext}^q_R(k(-a),W(-b))=0\) 
for every \(q\geqslant2\). The vanishing of these extension groups gives the desired splitting. 
\end{proof}

Let \(\mathcal{E}\rightarrow U\) be any family of elliptic curves with ordinary generic fiber at
\(\eta\) and a supersingular fiber \(E_0\). Form the family \(S\times_k \mathcal{E}\rightarrow U\). 
\begin{theorem}\label{thm:strict-specialization}
The Hodge--Witt numbers of \(X_{\overline{\eta}}\coloneqq (S\times_k \mathcal{E})_{\overline{\eta}}\) and \(X_0\coloneqq (S\times_k \mathcal E)_0\) are 
\begin{equation}\label{eq:threefoldnumbers}
 \begin{array}{c|c|c}
  n&\bigl(h_W^{i,n-i}(X_{\overline\eta})\bigr)_{i=0}^n
   &\bigl(h_W^{i,n-i}(X_0)\bigr)_{i=0}^n\\ \hline
  2&(1,5,1)&(2,3,2)\\
  3&(0,4,4,0)&(1,3,3,1).
 \end{array}
\end{equation}
\end{theorem}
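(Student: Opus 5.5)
The plan is to compute everything from three inputs: the splitting $M\simeq\bigoplus_j\mathrm H^j(M)[-j]$ of Proposition~\ref{prop:quotient-hodge-witt}, the analogous (and elementary) splitting of $N_E=\RG(E,W\Omega^\bullet_E)$ for an elliptic curve $E$, and Ekedahl's K\"unneth formula $\RG(S\times E,W\Omega^\bullet)\simeq M\,\cdstimes\,N_E$ in $D^b_c(R)$. Recall the shape of $N_E$ in the two cases.
\begin{itemize}
\item For $E$ ordinary, $N_E$ is a sum of copies of $W$ in bidegrees $(0,0),(0,1),(1,0),(1,1)$, all unit-root up to Tate twist.
\item For $E=E_0$ supersingular, $N_E$ is $W$ in bidegrees $(0,0)$ and $(1,1)$, together with the rank-two Dieudonn\'e module $D=\mathrm H^1(E_0,W\mathcal O)$ of slope $1/2$ in bidegree $(0,1)$, with $D/FD\cong D/VD\cong k$.
\end{itemize}
Both are Hodge--Witt, so all their domino numbers vanish. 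Since Hodge--Witt numbers are invariant under algebraically closed base change, it suffices to compute over $\overline{\kappa(\eta)}$ and over $k$.

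Step one computes the slope numbers $m^{i,n-i}$ for $n=2,3$ directly from rational crystalline K\"unneth and the slope table \eqref{eq:quotientslopes}. In degree $2$ the ordinary fiber has slopes $0,1^5,2$. The supersingular fiber has slopes $\tfrac12^2,1^3,\tfrac32^2$. Both give $m=(1,5,1)$. In degree $3$ both give $m=(0,4,4,0)$ by the same bookkeeping; for the supersingular fiber this uses the slopes $\tfrac12,\tfrac32,\tfrac52$ coming from $S$ in degrees $1,2,3$ tensored with $\mathrm H^1(E_0)$. This step is routine. By the defining formula $h_W=m+T^{i}-2T^{i-1}+T^{i-2}$, the table \eqref{eq:threefoldnumbers} is therefore equivalent to the following domino numbers:
\begin{itemize}
\item all domino numbers vanish on the generic fiber $X_{\bar\eta}$;
\item on $X_0$, we have $T^{0,2}=1$ and $T^{1,1}=0$ in degree $2$, and $T^{0,3}=1$ and $T^{1,2}=T^{2,1}=0$ in degree $3$.
\end{itemize}

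Step two computes these dominos by expanding $M\,\cdstimes\,N_E$ termwise with the splittings, since $\cdstimes$ is biexact. Free unit-root factors tensored with anything give only internal shifts and twists of type-I modules, and so contribute no dominos. The only factor that can produce dominos is the torsion module $k$ (with $F=V=0$), which sits in bidegrees $(0,2)$ and $(1,2)$ in $M$.

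In the ordinary case, $k\,\cdstimes\,W$ with $W$ of unit-root type is again $k$ up to a shift. Hence $X_{\bar\eta}$ has no dominos, which gives $(1,5,1)$ and $(0,4,4,0)$.

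In the supersingular case the relevant products are $k\,\cdstimes\,D$, with $k$ taken from $\mathrm H^2(S,W\mathcal O_S)$ and from the torsion of $\mathrm H^2(S,W\Omega^1_S)$. These are exactly the products that Ekedahl computes in his K\"unneth analysis: when neither $F$ nor $V$ is bijective on $D$, the product of $k$ and $D$ ``bursts'' into elementary dominos. I would compute it concretely. First resolve $k$ by the completed resolution of \cite[III, Lemma~1.5]{EkedahlII} (a two-term complex of free graded $R$-modules built from $F$ and $V$). Then tensor with $D$. Finally read off the cohomology, which should be one domino of dimension $\ell(D/FD)=\ell(D/VD)=1$ attached to the differential $\mathrm H^2(W\mathcal O)\to \mathrm H^2(W\Omega^1)$ of $X_0$, landing in total degree $2$; it arises from the $\mathrm{Tor}$ part, which lowers the total degree by one. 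The same computation for the copy of $k$ in $\mathrm H^2(S,W\Omega^1_S)$, and the computation of the degree-zero part shifted by $\mathrm H^1(E_0)$, should give the single domino $T^{0,3}=1$ in degree $3$. I would cross-check this with Ekedahl duality on $X_0$, which relates $T^{0,3}$ to $T^{2,0}$ in the complementary degree, and with the consistency $\sum_i h_W^{i,n-i}=b_n$.

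The main obstacle is step two in the supersingular case: pinning down $k\,\cdstimes\,D$ precisely, including which external degree its domino lands in and which of $T^{0,n}$, $T^{1,n-1}$ it feeds. As a safeguard I would also verify the $T$-values a posteriori using Lemma~\ref{lem:telescope} together with the lower bound from Proposition~\ref{prop:hwclosed}. That proposition forces $\HW_n(X_0)\leqslant\HW_n(X_{\bar\eta})$, and combined with the computed $m$-values and the constraint that the $T$'s are nonnegative integers, this leaves essentially only the stated answer once one knows that $T^{0,n}(X_0)\geqslant1$.
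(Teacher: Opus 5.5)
Your overall route is the paper's: use Ekedahl's K\"unneth formula in $D^b_c(R)$ together with the splittings, so that the only nontrivial input is the product $k\cdstimes D$. The paper takes this product from \cite[Proposition~3.39]{LiYang}: $\mathrm H^{-1}(\mathbb D(\alpha_p)\cdstimes E_{1/2})\simeq U_{-1}$, $\mathrm H^{0}\simeq U_1$, and it vanishes otherwise. Your slope numbers are correct, and so are your degree-two dominos. Your degree-three reduction, however, is wrong, so the argument as set up would not yield the table.

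With $m=(0,4,4,0)$, the defining formula gives
$h_W^{0,3}=T^{0,3}$,
$h_W^{1,2}=4+T^{1,2}-2T^{0,3}$,
$h_W^{2,1}=4+T^{2,1}-2T^{1,2}+T^{0,3}$,
$h_W^{3,0}=T^{3,0}-2T^{2,1}+T^{1,2}$.
The target $(1,3,3,1)$ therefore forces $T^{0,3}=T^{1,2}=1$ and $T^{2,1}=T^{3,0}=0$. Your values $T^{0,3}=1$ and $T^{1,2}=T^{2,1}=0$ give $(1,2,5,0)$ instead.

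The missing domino comes from the copy of $k$ in $\mathrm H^2(S,W\Omega^1_S)$, that is, $k(-1)$ in bidegree $(1,2)$.
\begin{itemize}
\item Tensor this copy with $D$ in bidegree $(0,1)$. The $\mathrm H^{-1}$-piece $U_{-1}(-1)$ lands in external degree $2$ and internal degrees $1,2$. It is therefore attached to $d:\mathrm H^2(W\Omega^1)\to\mathrm H^2(W\Omega^2)$ and gives $T^{1,2}=1$.
\item The $\mathrm H^0$-piece $U_1(-1)$ of the same product feeds $T^{1,3}$, which lies in degree $4$.
\item The domino $T^{0,3}=1$ comes instead from the $\mathrm H^0$-piece $U_1$ of the product with the copy of $k$ in $\mathrm H^2(S,W\mathcal O_S)$.
\end{itemize}
You attributed the $k(-1)$ contribution to $T^{0,3}$, and so lost one domino in degree $3$.

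Neither of your proposed checks would catch this as stated.
\begin{itemize}
\item Ekedahl's domino duality on a threefold reads $T^{i,j}=T^{1-i,5-j}$. It gives $T^{0,3}=T^{1,2}$, since degree $3$ is self-dual, not a relation between $T^{0,3}$ and $T^{2,0}$. Applied correctly, it immediately contradicts $T^{1,2}=0$.
\item The safeguard via Proposition~\ref{prop:hwclosed} is vacuous here. Both fibers have the same $\nu_a=\sum_{\lambda<a}(a-\lambda)m_\lambda$ at every integer $a$; for example $\nu_1=1$ and $\nu_2=7$ in degree $2$, and $\nu_2=4$ and $\nu_3=12$ in degree $3$. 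The generic fiber also has no dominos. So, via Lemma~\ref{lem:telescope}, the specialization inequality reduces to $T(X_0)\geqslant0$ and determines none of the $T$'s, even given $T^{0,n}(X_0)\geqslant1$.
\end{itemize}

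A minor slip: in degree $3$, only $\mathrm H^2(S)\otimes\mathrm H^1(E_0)$ involves the slope $1/2$ of $E_0$. The slopes are $1^2,(3/2)^4,2^2$, so no $1/2$ or $5/2$ occurs, although your $m=(0,4,4,0)$ is right.
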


\begin{proof}
  This follows from the K\"unneth formula \cite{EkedahlII} of Ekedahl. Write $E_{1/2}$ for
$\mathrm H^1(E_0,W\mathcal O_{E_0})$, the Dieudonn\'e module of a supersingular elliptic curve.
The only non-trivial calculation is 
\begin{equation}\label{eq:product-basic-domino-calculation}
 \mathrm H^q\bigl(\mathbb D(\alpha_p)\cdstimes E_{1/2}\bigr)\simeq
 \begin{cases}
  U_{-1},&q=-1,\\
  U_1,&q=0,\\
  0,&q\ne-1,0
 \end{cases}
\end{equation}
by \cite[Proposition~3.39]{LiYang}.
\end{proof}

\subsubsection{Ekedahl's Igusa fourfolds}\label{subsubsec:charpIgusa}
The following example is obtained directly from Ekedahl's computation in \cite[III, \S8]{EkedahlII};
the corresponding equal-characteristic family is explicitly discussed on p.~101, Remark~(i). We
record its consequence for Hodge--Witt polygons.

Here $k$ has characteristic $2$, and $\mathcal E\to U$ is a family of elliptic curves
with ordinary generic fiber and a supersingular special fiber. Fix an ordinary elliptic curve
$E_1/k$ and a nonzero point
$a\in E_1[2](k)$. The involution
\[
 (x,y)\longmapsto(x+a,-y)
\]
acts freely on $E_1\times_k\mathcal E$. Its quotient and its relative self-product,
\[
 \mathcal I=(E_1\times_k\mathcal E)/(\mathbb Z/2),\qquad
 \mathcal Y=\mathcal I\times_U\mathcal I,
\]
are smooth projective over $U$, with geometrically connected fibers. In particular, $\mathcal Y$ has
relative dimension $4$.

For either geometric fiber $\mathcal{I}_u$, rational crystalline descent gives
\( \mathrm{H}^1_{\cris}(\mathcal{I}_u/W)[1/2]\simeq \mathrm{H}^1_{\cris}(E_1/W)[1/2],
 \mathrm{H}^2_{\cris}(\mathcal{I}_u/W)[1/2]\simeq W[1/2](-1)^{\oplus2}.\)
Indeed, translation acts trivially on the cohomology of $E_1$, while inversion acts as $-1$ on
$\mathrm{H}^1_{\cris}(\mathcal{E}_u/W)[1/2]$. The K\"unneth formula then gives slopes $0,1,2$ in
$\mathrm{H}^2_{\cris}(\mathcal{Y}_u/W)[1/2]$, with multiplicities $1,6,1$. 
When \(\mathcal{E}_u\) is ordinary, \(\mathcal{Y}_u\) is also ordinary: 
ordinarity descends along the finite \'etale quotient
defining $\mathcal{I}_{u}$, and is preserved by products. If \(\mathcal{E}_u\) is supersingular, 
then Ekedahl's special-fiber calculation \cite[III, (8.10)]{EkedahlII} gives 
\( T^{0,2}(\mathcal{Y}_u)=1,T^{1,1}(\mathcal{Y}_u)=T^{2,0}(\mathcal{Y}_u)=0.\)
Substituting in the definition of Hodge--Witt numbers yields on \(\mathcal{Y}_u\)
\[
 \bigl(h_W^{0,2},h_W^{1,1},h_W^{2,0}\bigr)=
 \begin{cases}
 (1,6,1),&\text{if \(\mathcal{E}_u\) is ordinary},\\
 (2,4,2),&\text{if \(\mathcal{E}_u\) is supersingular}.
 \end{cases}
\]

\subsubsection{Fivefolds with constant Newton polygon}\label{subsubsec:charpGodeauxSerre}
We now use relative Godeaux--Serre approximation and the de~Rham--Witt cohomology of $B\alpha_p$ to
construct a strict specialization with a constant degree-three Newton polygon in every positive
characteristic.

Put \(S=\operatorname{Spec}k[t]_{(t)}\). 
Put \(G=\ker(F-t:\mathbb G_{a,S}\longrightarrow\mathbb G_{a,S})=\operatorname{Spec}\bigl(k[t]_{(t)}[x]/(x^p-tx)\bigr).\)
The group law is induced by addition. Since the defining polynomial is monic, $G$ is finite flat of
rank $p$. Its special fiber is $\alpha_p$. Over the algebraic closure of $k(t)$, choose $a$ with $a^{p-1}=t$. The
substitution $x=ay$ identifies its geometric generic fiber with the constant group $\mathbb
Z/p\mathbb Z$.

Apply the relative Godeaux--Serre construction of \cite[Corollary~3.7]{Kothari} in dimension $4$. It
gives a linear action of $G$ on a projective space, a $G$-stable relative complete intersection $Y$
in the free locus, and a quotient
\(X=Y/G\to S\)
which is smooth projective with geometrically connected fibers. The torsor $Y\to X$ and the
descended equivariant hyperplane bundle give a morphism
\(f:X\to BG\times B\mathbb G_m.\)
If we choose the ambient representation sufficiently large, equivariant weak Lefschetz and the
projective bundle formula show that $f_u$ is a Hodge $4$-equivalence for each geometric point $u$ of
$S$; see \cite[\S4]{Kothari} and \cite[Propositions~5.3, 5.10 and~5.11]{ABM}. 
All the geometric stacks here are quasi-compact, quasi-separated, smooth and Hodge-proper.
Consequently, \cite[Proposition~4.12 and Lemma~4.11]{LiYang} show that $f_u$ is both a de~Rham--Witt
$4$-equivalence and a crystalline $4$-equivalence. In particular, if
\[
 C_u=\cofib\bigl(W\Omega^\bullet(BG_u\times B\mathbb G_m)
                   \to \RG(X_u,W\Omega^\bullet)\bigr),
\]
then \(\mathrm{H}^j(C_u)^i=0\) for \(i+j<4.\)
The long exact sequence gives isomorphisms on $\mathrm{H}^j(W\Omega^i)$ for $i+j<4$, and injections
for $i+j=4$. These are maps of Raynaud modules and are compatible with $d$, $F$ and $V$.

The morphism $f_0:X_0\to B\alpha_p\times B\mathbb G_m$ on the special fiber is a
de Rham--Witt $4$-equivalence. 
We use the computation of $B\alpha_p$ in \cite[Theorem~5.11]{LiYang}, together with the Tate-summand
decomposition for $B\mathbb G_m$ in \cite[Corollary~4.18]{LiYang}. In total degree $3$, the only
nonzero domino of $B\alpha_p\times B\mathbb G_m$ begins in bidegree $(0,3)$, and its dimension is
$1$. The other source groups on that diagonal are finitely generated over $W(k)$. Thus
\(T^{0,3}(X_0)=1\) and the remaining degree-three domino numbers vanish
because their source groups are finitely generated over $W(k)$. 

For the geometric generic fiber \(X_{\overline{\eta}}\), \'etale descent along $\operatorname{Spec}(k)\to \mathbb{Z}/p$ identifies abelian sheaves on
the small \'etale site of $BG$ with $G$-modules, and global sections with $G$-invariants; thus its
de~Rham--Witt cohomology can be computed by group cohomology. 
Since $W\mathcal O$ corresponds to $W(k)$ with trivial $G$-action, passing to right
derived functors gives \(\mathrm R\Gamma(BG,W\mathcal O)\simeq \mathrm R\Gamma(G,W(k)).\) 
In internal degree $0$ this is $\mathrm{H}^a(\mathbb{Z}/p,W)$;
the other internal degrees vanish. The usual periodic resolution has alternating maps $0$ and $p$,
so these groups are finitely generated over $W$. The $B\mathbb G_m$ factor adds Tate summands. Thus
all source groups in total degree $3$ are finite over $W$, and all the corresponding domino numbers
of $X_{\overline\eta}$ vanish.

Crystalline comparison and the rational crystalline computation of the two classifying stacks give,
on both geometric fibers,
\begin{equation}\label{eq:strict-low-crys}
 \mathrm{H}^1_{\cris}(X_u/W)[1/p]=\mathrm{H}^3_{\cris}(X_u/W)[1/p]=0,
 \qquad \mathrm{H}^2_{\cris}(X_u/W)[1/p]\cong W[1/p](-1).
\end{equation}
For $B(\mathbb Z/p)$, the rational assertion also follows by averaging in group cohomology; for $B\alpha_p$,
use \cite[Proposition~4.17]{ABM}. The class in degree $2$ comes from $B\mathbb G_m$.

It follows directly from the definition of the Hodge--Witt numbers that
\begin{align*}
 \bigl(h_W^{0,3},h_W^{1,2},h_W^{2,1},h_W^{3,0}\bigr)(X_0)
     &=(1,-2,1,0),\\
 \bigl(h_W^{0,3},h_W^{1,2},h_W^{2,1},h_W^{3,0}\bigr)(X_{\overline\eta})
     &=(0,0,0,0).
\end{align*}
This already proves variation of the signed numbers in a connected smooth projective family. It does
not prove variation of their supporting-line polygons: the third Betti number is zero, and both
polygons have domain consisting only of the point $0$.

Choose an ordinary elliptic curve $E/k$ and set \(Z=X\times_S(E\times_k S).\)
This is smooth projective of relative dimension $5$ with geometrically connected fibers. 
Since \(R\Gamma(E,W\Omega^\bullet_{E/k})\cong W\oplus W(-1) \oplus W[-1]\oplus W(-1)[-1]\) 
and \(W\) is a product unit for Ekedahl's star product, 
the K\"unneth formula gives the Hodge-Witt numbers of \(Z_0\) and \(Z_{\overline{\eta}}\)

\[
\begin{aligned}
(h_W^{0,3},h_W^{1,2},h_W^{2,1},h_W^{3,0})(Z_{\bar\eta})
&=(0,1,1,0),\\
(h_W^{0,3},h_W^{1,2},h_W^{2,1},h_W^{3,0})(Z_0)
&=(1,-1,2,0).
\end{aligned}
\]
This gives a strict specialization of Hodge--Witt polygons. 

\section{Comparisons in mixed characteristic}\label{sec:mixed}

For a proper smooth variety over a perfect field of characteristic \(p\), the Newton polygon
gives an upper bound for the Hodge polygons of its characteristic-zero liftings. A natural
question is whether the special fiber also provides a lower bound, and whether the Newton upper
bound can be refined. We show that its Hodge--Witt polygon gives a lower bound, while its
abstract Hodge polygon gives a finer upper bound under a torsion-freeness hypothesis.

We now work in the setting of Theorem~\ref{thm:main}. The special-fiber length formula has already
been established in Corollary~\ref{cor:geometricgrowth}. To compare it with the Hodge numbers of the
generic fiber, we construct a prismatic lift of the same finite Nygaard quotients.

Choose compatible primitive roots $\zeta_{p^m}$ and put
$\epsilon=(1,\zeta_p,\zeta_{p^2},\ldots)\in\Oc^\flat$. Set
\[
 \Ainf=W(\Oc^\flat),\qquad \mu=[\epsilon]-1,\qquad
 \xi=\mu/\varphi^{-1}(\mu).
\]
Then $\Ainf/\xi=\Oc$ through $\theta$. Write $\varphi$ for Witt Frobenius and put
\[
 \xi_r=\prod_{j=0}^{r-1}\varphi^{-j}(\xi).
\]
The Witt residue map $\rho:\Ainf \to W(k)$ sends every $\varphi^{-j}(\xi)$ to $p$.

For an $\Ainf$-module or complex $M$, the restriction of scalars $\varphi_*M$ has underlying additive
object $M$ and action $c\cdot m=\varphi(c)m$. In particular, restriction by $\varphi^j$ moves a
complex supported at $\xi$ to one supported at $\varphi^{-j}(\xi)$. 

Our strategy uses the prismatic complex and Andriopoulos's r-Nygaard filtration to construct a
perfect \(\Ainf\)-complex \(\calP^a_{r,\mathfrak{X}/\Oc}\) 
such that \(\calP^a_{r,\mathfrak{X}/\Oc}[1/\xi_r]=0\). For each \(0\leqslant j\leqslant r-1\), 
let \(\varphi^{j}_*B_{\mathrm{dR}}^+\) be the \(\varphi^{-j}(\xi)\)-adic completion of \(\Ainf[1/p]\). 
We will show that the \(\varphi^{-j}(\xi)\)-adic length of \(\HH^n(\calP^a_{r,\mathfrak{X}/\Oc}\otL_{\Ainf} \varphi^{j}_*B_{\mathrm{dR}}^+)\) 
is exactly \(S_a\{h^{i,n-i}\}\). Moreover, the complex \(\calP^a_{r,\mathfrak{X}/\Oc}\) specializes to \(\calP^a_{r,X_k/k}\) of Equation \eqref{calQcalP}. 
An argument using minors then gives the desired inequality. 

\subsection{The Hodge--Witt lower bound}\label{subsec:hwlowerbound}

\begin{notation}Let $\Prism=\Prism_{\X/\Ainf}\coloneqq R\nu_*\mathcal O_\Prism\in D(\X_{\mathrm{\acute et}},\Ainf)$ be the relative
prismatic complex of \cite[Construction~4.9]{BS}, where $\nu$ is the natural projection from the
relative prismatic site to the small \'{e}tale site of $\X$. The comparison \cite[Theorem~17.2]{BS}
identifies $\varphi^*\Prism$ with the complex $A\Omega_\X$ in \cite{BMS}, and the de Rham--Witt 
comparison specializes \(\varphi^*\Prism\) to the de Rham--Witt complex \(W\Omega^\bullet_{X_k/k}\) 
in \(D(X_{k,\mathrm{\acute{e}t}},W)\) \cite[Theorem~14.1(i)]{BMS}. We denote the relative Nygaard
filtration on $\varphi^*\Prism$ by $\Nyg^{\geqslant\bullet}\varphi^*\Prism$ and the associated graded pieces by
$\gr^b_\Nyg\varphi^*\Prism$. Fix $a\geqslant1$. There are canonical maps
\begin{equation}\label{eq:span}
 \begin{aligned}
 \iota: \Nyg^{\geqslant a}\varphi^*\Prism&\longrightarrow\varphi^*\Prism,\\
 \varphi_{\Prism}/\xi^a: \Nyg^{\geqslant a}\varphi^*\Prism&\longrightarrow\Prism,
 \end{aligned}
\end{equation}
where $\varphi_\Prism:\varphi^*\Prism\to\Prism$ is the $\Ainf$-linear relative Frobenius of
\cite[Theorem~15.2]{BS}. Under the canonical identification
$\Prism\simeq\varphi_*\varphi^*\Prism$, multiplication by $\xi$ in the target is multiplication by
$\varphi(\xi)$ on the underlying $\varphi^*\Prism$.

The relative filtered Frobenius and its graded map are \cite[Theorems 15.2--15.3]{BS}
\begin{equation}\label{eq:filteredfrobenius}
 \begin{aligned}
 \varphi_\Prism: \Nyg^{\geqslant\bullet}\varphi^*\Prism&\longrightarrow\xi^\bullet\Prism,\\
 \gr^b_\Nyg\varphi^*\Prism\simeq\tau_{\leqslant b}(\Prism/\xi)\{b\}
       &\longrightarrow(\Prism/\xi)\{b\}.
 \end{aligned}
\end{equation}
The graded map is the canonical truncation inclusion, taken on the sheaf complex before applying
global sections. Braces denote invertible coefficient twists, which preserve the dimensions and
lengths under consideration.
\end{notation}

\begin{notation}
Set $\Nyg^{\geqslant a}_1\varphi^*\Prism=\Nyg^{\geqslant a}\varphi^*\Prism$. With the chosen generator $\xi$ and the
restriction-of-scalars convention above, we use the following relative form of Andriopoulos's
finite $r$-Nygaard construction for $r\geqslant2$:
\begin{equation}\label{eq:Es}
 \Nyg^{\geqslant a}_{r}\varphi^*\Prism=\left(\Nyg^{\geqslant a}
       \times^{h}_{\Prism}\varphi_*\Nyg^{\geqslant a}
       \times^{h}_{\varphi_*\Prism}\cdots
       \times^{h}_{\varphi_*^{r-2}\Prism}\varphi_*^{r-1}\Nyg^{\geqslant a}\right) (\varphi^*\Prism).
\end{equation}
At the $j$-th link, for $0\leqslant j\leqslant r-2$, the two maps to $\varphi_*^j\Prism$ are
$\varphi_*^j(\varphi_{\Prism}/\xi^a)$ from the left and $\varphi_*^{j+1}\iota$ from the right. Set
\[
 \calQ^a_{r,\mathfrak{X}/\Oc}=\cofib(\Nyg^{\geqslant a}_{r}\varphi^*\Prism\longrightarrow\varphi^*\Prism),\qquad
 \calP^a_{r,\mathfrak{X}/\Oc}=\RG(\X,\calQ^a_{r,\mathfrak{X}/\Oc}),
\]
where the map uses projection to the first factor, followed by $\iota$; cf.
\cite[Theorem~1.1, Definition~5.1 and \S5.2]{Andriopoulos}. 
\end{notation}

\begin{lemma}[Filtration by single-step quotients]\label{lem:filtration}
For $r\geqslant2$ there is a distinguished triangle
\begin{equation}\label{eq:QsTriangle}
 \varphi_*^{r-1}\calQ^a_{1,\mathfrak{X}/\Oc}\longrightarrow\calQ^a_{r,\mathfrak{X}/\Oc}
       \longrightarrow\calQ^a_{r-1,\mathfrak{X}/\Oc}\xlongrightarrow{+1}.
\end{equation}
Consequently $\calQ^a_{r,\mathfrak{X}/\Oc}$ has a finite filtration with factors
$\varphi_*^j\calQ^a_{1,\mathfrak{X}/\Oc}$, $0\leqslant j<r$.
\end{lemma}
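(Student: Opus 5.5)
The plan is to mirror the proof of Lemma~\ref{lem:wittfiltration}, with the strict quotients replaced by homotopy limits. The iterated pullback \eqref{eq:Es} splits at its last link: for $r\geqslant2$,
\[
 \Nyg^{\geqslant a}_{r}\varphi^*\Prism\simeq\Nyg^{\geqslant a}_{r-1}\varphi^*\Prism\times^{h}_{\varphi_*^{r-2}\Prism}\varphi_*^{r-1}\Nyg^{\geqslant a}.
\]
Here the left map is the projection to the last factor $\varphi_*^{r-2}\Nyg^{\geqslant a}$ followed by $\varphi_*^{r-2}(\varphi_\Prism/\xi^a)$, and the right map is $\varphi_*^{r-1}\iota$. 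This is just associativity of iterated homotopy pullbacks, and it holds because restriction of scalars $\varphi_*$ is exact and preserves finite limits.

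For a homotopy pullback square $A\times^h_B C$, the induced map on horizontal fibers is an equivalence: the fiber of $A\times^h_BC\to A$ is the fiber of $C\to B$. Apply this with $A=\Nyg^{\geqslant a}_{r-1}$, $B=\varphi_*^{r-2}\Prism\simeq\varphi_*^{r-1}\varphi^*\Prism$ and $C=\varphi_*^{r-1}\Nyg^{\geqslant a}$, using the identification $\Prism\simeq\varphi_*\varphi^*\Prism$. Then
\[
 \operatorname{fib}\bigl(\Nyg^{\geqslant a}_r\to\Nyg^{\geqslant a}_{r-1}\bigr)\simeq\operatorname{fib}\bigl(\varphi_*^{r-1}\iota\bigr)\simeq\varphi_*^{r-1}\calQ^a_{1}[-1].
\]
Now take the composite $\Nyg^{\geqslant a}_r\to\Nyg^{\geqslant a}_{r-1}\to\varphi^*\Prism$, where both maps to $\varphi^*\Prism$ are projection to the first factor followed by $\iota$, so the triangle commutes on the nose. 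The octahedral axiom applied to this composite gives a triangle
\[
 \cofib(\Nyg^{\geqslant a}_{r-1}\to\Nyg^{\geqslant a}_r)[\,\cdot\,]\to\calQ^a_r\to\calQ^a_{r-1}.
\]
More precisely, the cofiber of $\calQ^a_r\to\calQ^a_{r-1}$ is the shifted cofiber of $\Nyg^{\geqslant a}_r\to\Nyg^{\geqslant a}_{r-1}$, which is $\operatorname{fib}(\Nyg^{\geqslant a}_r\to\Nyg^{\geqslant a}_{r-1})[1]\simeq\varphi_*^{r-1}\calQ^a_1$. Rotating this triangle yields \eqref{eq:QsTriangle}. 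The finite filtration then follows by induction on $r$, since $\varphi_*$ is exact and commutes with cofibers, so the twists $\varphi_*^j\calQ^a_1$ appear as graded pieces for $0\leqslant j<r$.

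The step I expect to need the most care is the coherence: making sure the maps are genuinely compatible in the $\infty$-category $D(\X_{\mathrm{\acute et}},\Ainf)$, rather than only up to non-specified homotopy. The relevant compatibilities are the identification $\varphi_*^{r-2}\Prism\simeq\varphi_*^{r-1}\varphi^*\Prism$ and the agreement of the two structure maps to $\varphi^*\Prism$. I would handle this by working in the stable $\infty$-category throughout. There the iterated pullback is defined as a limit over a zigzag diagram, and restricting that diagram to its first $r-1$ terms gives the projection canonically. The fiber identification is then the standard pasting lemma for pullback squares, with no choices involved.
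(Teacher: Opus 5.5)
Your proposal is correct and follows essentially the paper's argument. You split off the last link to get a homotopy Cartesian square and identify $\cofib(\Nyg^{\geqslant a}_r\varphi^*\Prism\to\Nyg^{\geqslant a}_{r-1}\varphi^*\Prism)\simeq\varphi_*^{r-1}\calQ^a_{1,\mathfrak{X}/\Oc}$ (the paper does this via ``Cartesian implies cocartesian'', you via fibers), then apply the octahedral axiom to the composite to $\varphi^*\Prism$.

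One shift in your middle step needs fixing. $\cofib(\calQ^a_r\to\calQ^a_{r-1})$ is $\varphi_*^{r-1}\calQ^a_1[1]$, that is, $\operatorname{fib}(\Nyg^{\geqslant a}_r\to\Nyg^{\geqslant a}_{r-1})[2]$ rather than $[1]$. It is this object that rotates to the stated triangle.
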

\begin{proof}
Forgetting the last factor gives the homotopy Cartesian square
\[
 \begin{array}{ccc}
 \Nyg^{\geqslant a}_{r}\varphi^*\Prism
 &\longrightarrow&
 \varphi_*^{r-1}\Nyg^{\geqslant a}
 \\[4pt]
 \big\downarrow
 &&
 \big\downarrow
 \\[4pt]
 \Nyg^{\geqslant a}_{r-1}\varphi^*\Prism
 &\longrightarrow&
 \varphi_*^{r-2}\Prism .
 \end{array}
\]
Since a homotopy Cartesian square in a stable category is also homotopy cocartesian,
the two vertical maps have the same cofiber. Thus
\[
 \cofib(\Nyg^{\geqslant a}_{r}\varphi^*\Prism\longrightarrow \Nyg^{\geqslant a}_{r-1}\varphi^*\Prism)
 \simeq\varphi_*^{r-1}\calQ^a_{1,\mathfrak{X}/\Oc}.
\]
The cofiber triangle for the composite
$\Nyg^{\geqslant a}_{r}\varphi^*\Prism\to \Nyg^{\geqslant a}_{r-1}\varphi^*\Prism\to\varphi^*\Prism$ now gives
\eqref{eq:QsTriangle}, and induction gives the filtration.
Applying derived global sections preserves these triangles.
\end{proof}

The residue map defines a morphism of perfect prisms $(\Ainf,\xi)\to(W,p)$, where $W=W(k)$ and
$\xi$ is mapped to \(p\). By the crystalline comparison
\cite[Theorem~5.2]{BS} and the de~Rham--Witt comparison, the complexes $W\Omega^\bullet_{X_k/k}$ and
$\sigma_*W\Omega^\bullet_{X_k/k}$ represent, respectively, the derived crystalline specializations of
$\varphi^*\Prism$ and $\Prism$ in $D(X_{k,\mathrm{\acute et}},W)$.

The relative Nygaard filtration commutes with completed derived base change \cite[Theorems~15.2(3)
and~15.3]{BS}. Under the crystalline comparison, its specialization is represented by the classical
filtration of subcomplexes in \eqref{eq:rthnygaard} for $r=1$ \cite[Proposition~8.5 and
Corollary~8.6]{BMS2}:
\[
 \Nyg^{\geqslant a}\varphi^*\Prism\widehat\otimes^{\mathrm{L}}_{\Ainf} W
 \simeq\Nyg^{\geqslant a}W\Omega^\bullet_{X_k/k}.
\]
This comparison respects the inclusion and divided Frobenius. On graded pieces, it matches their
descriptions as $\tau_{\leqslant b}(\sigma_*\Omega^\bullet_{X_k/k})$ in filtration degree $b$; see
\eqref{eq:filteredfrobenius} and \cite[Lemma~8.2]{BMS2}.

\begin{proposition}[Crystalline comparison]\label{prop:crystallinequotient}
For every $r,a\geqslant1$, there is a natural equivalence
\begin{equation}\label{eq:specialcomparison}
 \calP^a_{r,\mathfrak{X}/\Oc} \otL_{\Ainf}W\simeq \calP^a_{r,X_k/k}.
\end{equation}
\end{proposition}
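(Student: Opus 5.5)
The plan is to build the equivalence one homotopy-pullback factor at a time, reducing everything to the case $r=1$ and then using the matching of the defining diagrams. I will base change the sheaf-level constructions along the residue map $\rho:\Ainf\to W$ and compare with the de~Rham--Witt side. The homotopy pullback \eqref{eq:Es} has Andriopoulos's $r$-Nygaard construction as its de~Rham--Witt counterpart, namely Proposition~\ref{rth-nygaard}.

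\textbf{Case $r=1$.} Here $\calQ^a_{1,\X/\Oc}=\cofib(\Nyg^{\geqslant a}\varphi^*\Prism\to\varphi^*\Prism)$. I will use three inputs.
\begin{itemize}
\item Completed derived base change along $\rho$ of the relative Nygaard filtration, \cite[Theorems~15.2(3), 15.3]{BS}.
\item The identification $\Nyg^{\geqslant a}\varphi^*\Prism\widehat\otimes^{\mathrm L}_{\Ainf}W\simeq\Nyg^{\geqslant a}_1W\Omega^\bullet_{X_k/k}$ recalled just before the statement, compatible with $\iota$.
\item The de~Rham--Witt comparison $\varphi^*\Prism\widehat\otimes^{\mathrm L}W\simeq W\Omega^\bullet_{X_k/k}$.
\end{itemize}
Taking cofibers, these give $\calQ^a_{1,\X/\Oc}\widehat\otimes^{\mathrm L}_{\Ainf}W\simeq\calQ^a_{1,X_k/k}$.

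To remove the completion, note that $\calQ^a_1$ has a finite filtration with graded pieces $\tau_{\leqslant b}(\Prism/\xi)\{b\}$ for $b<a$. These are $\xi$-torsion and derived $p$-complete, so completed and uncompleted base change agree. The finite filtration, matched with \eqref{eq:wittnygaardgraded}, lets me check this on graded pieces.

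Global sections commute with $\otL_{\Ainf}W$ by the projection formula. The inputs for this are that $\X$ is qcqs with finite cohomological dimension and that $\calP^a_{1,\X/\Oc}$ is perfect. Perfectness follows because the graded pieces are coherent $\Oc$-complexes on a proper smooth formal scheme, hence perfect over $\Ainf$ after noting that $\Oc=\Ainf/\xi$ is perfect over $\Ainf$.

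\textbf{Case $r\geqslant2$.} I will base change the whole homotopy limit \eqref{eq:Es}. Finite homotopy limits commute with the exact functor $-\otL_{\Ainf}W$. Under $\rho$, $\varphi_*^j$ corresponds to $\sigma_*^j$, since $\rho$ is $\varphi$-equivariant and $W$ is perfect, so $\varphi_*^j(-)\otL W\simeq\sigma_*^j(-\otL W)$.

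The two link maps are $\varphi_*^j(\varphi_\Prism/\xi^a)$ and $\varphi_*^{j+1}\iota$. Under the comparison these go to $\sigma_*^{j}(\Phi/p^a)$ (after identifying $\Prism\otL W$ with $\sigma_*W\Omega^\bullet$) and $\sigma_*^{j+1}\iota$. This uses that the comparison respects the divided Frobenius, as recalled before the statement. Hence $\Nyg^{\geqslant a}_r\varphi^*\Prism\otL W$ becomes the iterated pullback \eqref{eq:Nygar}, which by Proposition~\ref{rth-nygaard} is $\Nyg^{\geqslant a}_rW\Omega^\bullet_{X_k/k}$. Taking cofibers and global sections then gives \eqref{eq:specialcomparison}.

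As an alternative for the passage from completed to uncompleted tensor products, I can use induction on $r$. Compatibility of the triangles \eqref{eq:QsTriangle} and Lemma~\ref{lem:wittfiltration} with the outer terms, together with the five lemma, reduces the statement to $r=1$.

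\textbf{Main obstacle.} The delicate point is the completion issue: $\varphi^*\Prism$ itself is only derived $(p,\xi)$-complete, so the comparison for the individual factors is completed. Only the cofiber $\calQ^a_r$, which is killed by a power of $\xi_r$, is insensitive to completion. I therefore first prove the equivalence for $\calQ^a_r$ via the completed statements, and only at the end descend to the uncompleted tensor product. A second point to check is that the divided Frobenius comparison is coherent enough to give a map of the full diagrams, not just of individual maps, so that the homotopy pullbacks are identified. For this I would rely on the functoriality of \cite[Theorem~15.2]{BS} in the prism $(\Ainf,\xi)\to(W,p)$.
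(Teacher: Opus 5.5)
Your proposal is correct and follows essentially the paper's route: you identify the sheaf-level specialization of $\calQ^a_{r,\X/\Oc}$ with $\calQ^a_{r,X_k/k}$ through the Nygaard comparison (compatible with $\iota$ and the divided Frobenius) and Proposition~\ref{rth-nygaard}, and you extend to global sections and to all $r$ via the finite filtrations of Lemma~\ref{lem:filtration}. The differences are cosmetic: the paper passes to global sections by coherent derived base change on the Hodge pieces $\Omega^i_{\X/\Oc}[-i]$, using $(\Ainf/\xi)\otL_{\Ainf}W\simeq k$, and removes the completion at the end via perfectness over $W$, whereas you use a projection formula and the sheaf-level $p$-power torsion of $\calQ^a_r\otL_{\Ainf}W$.
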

\begin{proof}
The preceding crystalline comparison identifies the sheaf-level specialization of $\calQ^a_{r,\mathfrak{X}/\Oc}$ with
the displayed quotient. The base-change argument in the proof of Lemma~\ref{lem:perfectness}
permits passage to derived global sections.
\end{proof}

\begin{lemma}[Perfectness and support]\label{lem:perfectness}
For $r,a\geqslant1$, the complex $\calP^a_{r,\mathfrak{X}/\Oc}$ is perfect over $\Ainf$ and satisfies
\begin{equation}\label{eq:support}
 \calP^a_{r,\mathfrak{X}/\Oc}[1/\xi_r]=0.
\end{equation}
\end{lemma}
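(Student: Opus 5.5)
The plan is to reduce both assertions to the case $r=1$ using Lemma~\ref{lem:filtration}. Perfectness is stable under cones and extensions, and $\varphi_*$ is an automorphism of $\Ainf$, so it preserves perfect complexes. Hence if $\calP^a_{1,\X/\Oc}$ is perfect, the finite filtration with factors $\RG(\X,\varphi^j_*\calQ^a_{1,\X/\Oc})=\varphi^j_*\calP^a_{1,\X/\Oc}$, $0\leqslant j<r$, shows that $\calP^a_{r,\X/\Oc}$ is perfect. For the support, note that $\calP^a_{1,\X/\Oc}$ is killed by inverting $\xi$. Then $\varphi^j_*\calP^a_{1,\X/\Oc}$ is killed by inverting $\varphi^{-j}(\xi)$, because on $\varphi_*^jM$ the element $c$ acts as $\varphi^j(c)$. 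Each factor is therefore killed by inverting $\xi_r$. Since localization is exact, \eqref{eq:support} follows by induction along the triangles \eqref{eq:QsTriangle}.

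For $r=1$, I would use the finite Nygaard filtration on $\calQ^a_{1,\X/\Oc}=\varphi^*\Prism/\Nyg^{\geqslant a}\varphi^*\Prism$. Its graded pieces for $0\leqslant b<a$ are $\gr^b_\Nyg\varphi^*\Prism\simeq\tau_{\leqslant b}(\Prism/\xi)\{b\}$ by \eqref{eq:filteredfrobenius}. By the Hodge--Tate comparison, $\tau_{\leqslant b}(\Prism/\xi)$ has a finite Postnikov filtration with graded pieces $\Omega^i_{\X/\Oc}\{-i\}[-i]$ for $0\leqslant i\leqslant b$. This is the mixed-characteristic analogue of the Cartier step in Lemma~\ref{lem:wittperfectness}. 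Since $\X$ is proper and smooth over $\Oc$, each $\RG(\X,\Omega^i_{\X/\Oc})$ is a perfect $\Oc$-complex, which I would take from \cite{BMS} or from proper coherent finiteness for formal schemes. As $\xi$ is a nonzerodivisor in $\Ainf$ with $\Ainf/\xi=\Oc$, a perfect $\Oc$-complex is perfect over $\Ainf$; indeed $\Oc\simeq[\Ainf\xrightarrow{\xi}\Ainf]$. The two finite filtrations then give perfectness of $\calP^a_{1,\X/\Oc}$. Every graded piece is an $\Ainf/\xi$-complex, so it vanishes after inverting $\xi$. The finite filtration then gives $\calP^a_{1,\X/\Oc}[1/\xi]=0$.

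I expect the main obstacle to be the perfectness of the coherent pieces. One must check that derived global sections on $\X_{\mathrm{\acute et}}$ of $\Omega^i_{\X/\Oc}$, viewed as $p$-complete sheaves, give a perfect $\Oc$-complex even though $\Oc$ is non-noetherian. The first approach I would try is to use that $\X$ is proper and smooth, hence of finite presentation, and to invoke the finiteness result of \cite[\S4]{BMS} that $\RG(\X,\Omega^i)$ is perfect. A backup is to use the perfectness of $\RG(\X,\Prism)$ from \cite{BS} together with the finite filtration to conclude directly.

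Finally, this same argument (finite filtrations by coherent pieces compatible with completed base change) supplies the passage from sheaves to global sections used in Proposition~\ref{prop:crystallinequotient}. Each coherent piece $\RG(\X,\Omega^i_{\X/\Oc})$ satisfies derived base change along $\Oc\to k$, and after tensoring along $\Ainf\to W$ the pieces match those of the special fiber described in Lemma~\ref{lem:wittperfectness}. Derived $p$-completion is harmless throughout, because all pieces are already perfect.
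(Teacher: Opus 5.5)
Your proposal is correct and follows essentially the same route as the paper. You reduce to $r=1$ via Lemma~\ref{lem:filtration}, with the $j$-th factor supported at $\varphi^{-j}(\xi)$; you then pass through the Nygaard graded pieces $\tau_{\leqslant b}(\Prism/\xi)\{b\}$ and their Hodge--Tate Postnikov filtrations to the complexes $\RG(\X,\Omega^i_{\X/\Oc})$, which are perfect over $\Ainf$ because $\xi$ is a nonzerodivisor. The one input you leave to citation is perfectness of $\RG(\X,\Omega^i_{\X/\Oc})$ over the non-noetherian ring $\Oc$; the paper proves this directly by applying proper perfect pushforward and derived base change to the reductions $\X\bmod p^m$ over $\Oc/p^m$ and then passing to the derived inverse limit.
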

\begin{proof}
First, $\RG(\X,\Omega^i_{\X/\Oc})$ is perfect over $\Oc$ and satisfies derived base change to the
special fiber; here the forms are continuous. Indeed, put $\X_m=\X\bmod p^m$. Proper perfect
pushforward and derived base change give compatible perfect complexes $\RG(\X_m,\Omega^i)$ over
$\Oc/p^m$ \cite[Tag~0DJT]{Stacks}. Their derived inverse limit is perfect over $\Oc$ and recovers
each reduction \cite[Tag~09AW]{Stacks}.

This inverse limit computes formal coherent cohomology. A finite affine formal cover with affine
intersections gives \v{C}ech complexes on the reductions with termwise surjective transition maps,
so their derived inverse limit is the formal \v{C}ech complex. Derived base change to $k$ follows
from that for $\X_1$ over $\Oc/p$.

By \eqref{eq:filteredfrobenius}, the finite Nygaard filtration on $\calQ^a_1$ has graded objects
$\tau_{\leqslant b}(\Prism/\xi)\{b\}$ for $0\leqslant b<a$. Taking their finite Postnikov
filtrations gives pieces $\Omega^i_{\X/\Oc}[-i]$ with invertible twists, for $0\leqslant i\leqslant
b$. Their global sections are perfect over $\Oc$, hence over $\Ainf$, since $\xi$ is a nonzerodivisor.
This proves perfectness for $r=1$. Lemma~\ref{lem:filtration} proves it for all $r$ and gives
\eqref{eq:support}, since its $j$-th factor is supported at $\varphi^{-j}(\xi)$.

For $\calQ^a_1$, crystalline specialization commutes with global sections by coherent derived base
change on these finite filtrations and the identity
\((\Ainf/\xi)\otL_{\Ainf} W\simeq[W\xrightarrow{p}W]\simeq k.\)
The filtration in Lemma~\ref{lem:filtration} gives the same assertion for every $r$. The resulting
global complexes are perfect over $W$, so ordinary and completed derived tensor products agree.
\end{proof} 

The ring $\varphi_*^j\BdR$  can be understood as the $\varphi^{-j}(\xi)$-adic completion of $\Ainf[1/p]$. It is a complete discrete 
valuation ring with uniformizer $\varphi^{-j}(\xi)$
and residue field $\Ainf[1/p]/(\varphi^{-j}(\xi))$, identified with $C$ by $\theta\varphi^j$. The automorphism
$\varphi^j$ identifies $\varphi_*^j\BdR$ with $\BdR$.

\begin{lemma}[Specialization of lengths]\label{lem:determinant}
Let $\calP$ be a perfect $\Ainf$-complex such that $\calP\otL_{\Ainf}W[1/p]$ is acyclic. Then for every $n$, 
\(\ell_{\varphi^j_*\BdR}^n(\calP\otL_{\Ainf} \varphi^j_*\BdR)\) is finite for all \(j\in \mathbb Z\) and non-zero 
for only finitely many \(j\). Moreover, 
\begin{equation}\label{eq:detbound}
 \ell^n(\calP\otL_{\Ainf} W) \geqslant\ \sum_{j\in \mathbb Z}\ell_{\varphi^j_*\BdR}^n(\calP\otL_{\Ainf} \varphi^j_*\BdR).
\end{equation}
\end{lemma}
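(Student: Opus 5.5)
The plan is to reduce to a statement about minors of a single differential, as in Lemma~\ref{lem:minors}. First, represent $\calP$ by a bounded complex $C^\bullet$ of finite free $\Ainf$-modules; this is possible since $\Ainf$ is local and finite projective modules over it are free. Let $t$ be the rank of $d^{n-1}\otimes W[1/p]$. Since $W[1/p]$ is a field over which $C^\bullet$ is acyclic, these ranks are determined by the ranks of the terms, counted from the left.

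The next step is to transport this rank information to every $\varphi^j_*\BdR$. Let $\Delta\subset\Ainf$ be the ideal generated by the $t$-by-$t$ minors of $d^{n-1}$. Choose, for each $q$, a maximal minor $\gamma_q$ realizing acyclicity, i.e.\ the appropriate rank $s_q$ minor of $d^q$ whose image in $W$ is nonzero. Over $\Frac(\Ainf)$, and hence after inverting $\gamma=\prod_q\gamma_q$, the complex $C^\bullet$ is split acyclic: the ranks cannot exceed those forced by the term ranks, and $\gamma_q$ gives them. Each $\varphi^j_*\BdR$ is a DVR receiving $\Ainf$ with uniformizer $\varphi^{-j}(\xi)$. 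If the image of $\gamma$ there is nonzero, then $C^\bullet\otimes\Frac(\varphi^j_*\BdR)$ is acyclic with the same ranks $s_q$. Lemma~\ref{lem:minors} then gives
\[
\ell^n_{\varphi^j_*\BdR}=v_{\varphi^{-j}(\xi)}(\Delta),\qquad \ell^n(\calP\otL W)=v_p(\rho(\Delta)).
\]
Nonvanishing of $\gamma$ in $\varphi^j_*\BdR$ holds because $\Ainf\to\varphi^j_*\BdR$ is injective, and $\gamma\neq0$ since $\rho(\gamma)\neq0$.

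Now take any element $\beta\in\Delta$ with $v_p(\rho(\beta))$ minimal, say equal to $N$; it suffices to show $\sum_j v_{\varphi^{-j}(\xi)}(\beta)\leqslant N$. Since $\rho(\beta)\ne0$, $\beta$ is not divisible by $p$ beyond level $N$ in a suitable sense. The key fact I would use is a factorization statement: for $f\in\Ainf$ with $\rho(f)\neq 0$ and distinct $j_1,\dots,j_m$ with multiplicities $e_i$ such that $\varphi^{-j_i}(\xi)^{e_i}$ divides $f$ in $\varphi^{j_i}_*\BdR$, the product $\prod_i\varphi^{-j_i}(\xi)^{e_i}$ already divides $f$ in $\Ainf$. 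Each $\varphi^{-j}(\xi)$ is a distinguished, hence primitive, element. These elements are pairwise coprime, generating distinct prime ideals with $\Ainf/\varphi^{-j}(\xi)\cong\Oc$ $p$-torsion-free. Divisibility in $\Ainf[1/p]$ then descends to $\Ainf$ because $\Ainf/(\varphi^{-j}(\xi))$ is $p$-torsion-free. Applying $\rho$, which sends each $\varphi^{-j}(\xi)$ to $p$, gives $p^{\sum e_i}\mid\rho(f)$. Hence the sum is at most $N$. This also yields finiteness and that only finitely many $j$ contribute.

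The main obstacle I expect is the factorization and descent step. One must check that $\varphi^{-j}(\xi)$-adic divisibility in the completion $\varphi^j_*\BdR$ implies divisibility in $\Ainf[1/p]$, and then in $\Ainf$. I would prove this inductively on $e$ by peeling off one factor at a time. Writing $f=\varphi^{-j}(\xi)g$ in $\Ainf$, which is valid since $\theta\varphi^j(f)=0$ in $C$ and hence in $\Oc$, I would then show that $g$ retains the remaining valuations. This uses that the other $\varphi^{-j'}(\xi)$ are units in $\varphi^j_*\BdR$, since their $\theta\varphi^j$-images are nonzero.
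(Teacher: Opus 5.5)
Your proposal is correct and follows essentially the paper's argument: you transport ranks from $W[1/p]$ to $\Frac(\Ainf)$ and to each $\varphi^j_*\BdR$ via minors, apply Lemma~\ref{lem:minors} on both sides, contract $\varphi^{-j}(\xi)$-adic divisibility of a minor back to $\Ainf$, and then apply $\rho$. Your peeling step via $\ker(\theta\circ\varphi^j)=\varphi^{-j}(\xi)\Ainf$, which uses that the other $\varphi^{-j'}(\xi)$ are units in $\varphi^j_*\BdR$, is a slightly more direct replacement for the paper's passage through $\Ainf[1/p]$ and prime-factor cancellation.

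One small slip: the $\varphi^{-j}(\xi)$ are not pairwise coprime in the local ring $\Ainf$ (e.g.\ $(\xi,\varphi(\xi))=(\xi,p)$); they are only pairwise nonassociated primes. Your peeling argument never uses coprimality, so nothing breaks.
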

\begin{proof}
The ring $\Ainf$ is local. Choose a bounded finite free representative $C^\bullet$ of $\calP$. Write $r_q$ and $t_q$ for the ranks of $dC^q\subset C^{q+1}$ after base change to $\Frac(\Ainf)$
and $W[1/p]$, respectively. The rank of each differential is the largest size of a nonzero minor.
A minor nonzero after applying $\rho$ was already nonzero in $\Ainf$, so $r_q\geqslant t_q$. By hypothesis,
$C^\bullet\otimes_{\Ainf}W[1/p]$ is acyclic, and hence
\[
 \operatorname{rank}C^q
 =t_{q-1}+t_q
 \leqslant r_{q-1}+r_q
 \leqslant\operatorname{rank}C^q.
\]
Thus $r_q=t_q$ for every $q$, and the complex is also acyclic over
$\Frac(\Ainf)$. Since $\Ainf$ embeds in each $\varphi^j_*\BdR$, the differential
ranks over $\Frac(\varphi^j_*\BdR)$ are the same. Consequently, the cohomology
groups over $W$ and $\varphi^j_*\BdR$ have finite length. 

Put $t=r_{n-1}$. Lemma~\ref{lem:minors} computes the cohomology lengths from the maximal nonzero minors of $d:C^{n-1}\rightarrow C^n$.
If $t=0$, all these lengths vanish, so assume $t>0$.

Put $\ell_j=\ell_{\varphi^j_*\BdR}^n(\calP\otL_{\Ainf} \varphi^j_*\BdR)$. Every $t$-minor $f\in \Ainf$ lies in
$\varphi^{-j}(\xi)^{\ell_j}\varphi^j_*\BdR$. We next contract this divisibility to $\Ainf$. For every $m\geqslant1$,
\[
 \Ainf[1/p]/(\varphi^{-j}(\xi)^m)\simeq \varphi^j_*\BdR/(\varphi^{-j}(\xi)^m).
\]
It follows that $f\in\varphi^{-j}(\xi)^{\ell_j}\Ainf[1/p]$; when $\ell_j=0$ there is no condition to contract. Since
$\Ainf/(\varphi^{-j}(\xi))\simeq\Oc$ is a domain, $\varphi^{-j}(\xi)$ is prime; it does not divide $p$. Successive prime-factor
cancellation yields
\[
 \varphi^{-j}(\xi)^m\Ainf[1/p]\cap \Ainf=\varphi^{-j}(\xi)^m\Ainf.
\]
The $\varphi^{-j}(\xi)$ are pairwise nonassociated prime elements. Successive cancellation therefore shows that
divisibility by every $\varphi^{-j}(\xi)^{\ell_j}$ implies divisibility by their 
product over any finite subset \(J\) of \(\mathbb Z\). 

Consequently every $t$-minor is divisible in $\Ainf$ by $\prod_{j\in J}\varphi^{-j}(\xi)^{\ell_j}$. After $\rho$, every
specialized minor is divisible by $p^{\sum_{j\in J}\ell_j}$. At least one such minor is nonzero, since the
specialized differential still has rank $t$. Formula \eqref{eq:minorlength} over $W$ proves\[
 \sum_{j\in J}\ell_j
 \leqslant\ell^n(\calP\otL_{\Ainf}W).
\]
Since this holds for every finite $J$, only finitely many $\ell_j$ are nonzero,
and \eqref{eq:detbound} follows.
\end{proof}

\begin{remark}
This argument is inspired by the torsion comparison in \cite[Lemma~4.14, Corollary~4.15 and
Lemma~4.16]{BMS}. Those results control vertical $p$-power torsion. For the complexes $\calP^a_r$
used below, rational cohomology is supported on the horizontal divisors $\varphi^{-j}(\xi)$, and
Lemma~\ref{lem:determinant} compares individual
cohomology lengths by adding their contributions to the valuation of a maximal nonzero minor.
\end{remark}

\begin{proposition}\label{prop:genericlength}
For $r,a\geqslant1$ and integers $n,j$, we have
\begin{equation}
\label{eq:genericlength}\ell_{\varphi^j_*\BdR}^n(\calP^a_{r,\mathfrak{X}/\Oc}\otL_{\Ainf} \varphi^j_*\BdR)=\begin{cases}
  S_a\{h^{i,n-i}_{\mathfrak{X}_\eta}\} \;&\text{if}\; 0\leqslant j<r\\
  0 \;&\text{otherwise.}
\end{cases}  
\end{equation}
\end{proposition}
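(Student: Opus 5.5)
Reduce to the case $r=1$ and then compute that case explicitly. By Lemma~\ref{lem:filtration}, $\calP^a_{r,\mathfrak{X}/\Oc}$ has a finite filtration with graded pieces $\varphi_*^{j'}\calP^a_{1,\mathfrak{X}/\Oc}$ for $0\leqslant j'<r$. By Lemma~\ref{lem:perfectness}, the $j'$-th piece is killed by a power of $\varphi^{-j'}(\xi)$. The elements $\varphi^{-j}(\xi)$ are pairwise nonassociated primes, and $\varphi^{-j'}(\xi)$ becomes a unit in $\varphi^j_*\BdR$ for $j'\neq j$. Hence after $\otL_{\Ainf}\varphi^j_*\BdR$ every graded piece except the one with $j'=j$ becomes acyclic. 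This gives
\[
\calP^a_{r,\mathfrak{X}/\Oc}\otL_{\Ainf}\varphi^j_*\BdR\simeq
\begin{cases}\varphi^j_*\bigl(\calP^a_{1,\mathfrak{X}/\Oc}\otL_{\Ainf}\BdR\bigr), & 0\leqslant j<r,\\ 0, & \text{otherwise}.\end{cases}
\]
The automorphism $\varphi^j$ identifies $\varphi^j_*\BdR$ with $\BdR$, carrying $\varphi^{-j}(\xi)$ to $\xi$, so lengths are preserved. It therefore suffices to prove $\ell^n_{\BdR}(\calP^a_1\otL\BdR)=S_a\{h^{i,n-i}\}$.

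For $r=1$, I would use the de~Rham comparison over $\BdR$. The first key fact is that $\Prism\otL_{\Ainf}\BdR$ (equivalently $\varphi^*\Prism\otL\BdR$ after the Frobenius twist) computes $\RG_{\mathrm{dR}}(X_\eta/\BdR)$. The second is that the Nygaard filtration becomes the $\xi$-adic filtration convolved with the Hodge filtration,
\[
\Nyg^{\geqslant a}\otimes\BdR \simeq \sum_{i}\xi^{a-i}\,\mathrm{Fil}^i_{\mathrm{Hdg}}
\]
(Bhatt--Morrow--Scholze, BMS2 on $\BdR$-cohomology; cf.\ the $B_{\mathrm{dR}}^+$-comparison in BS). Since $X_\eta$ is proper smooth over $C$, the Hodge--de~Rham spectral sequence degenerates. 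The $\BdR$-cohomology $\HH^n_{\mathrm{dR}}(X_\eta/\BdR)$ is then free, and it admits a splitting in which $\HH^n$ of the Nygaard piece is $\bigoplus_i \xi^{\max(a-i,0)}\BdR^{\,h^{i,n-i}}$, with the map to $\HH^n$ injective. The cofiber then has $\HH^n$ of length $\sum_{i<a}(a-i)h^{i,n-i}=S_a\{h^{i,n-i}\}$, matching \eqref{eq:weightedcuts}. Injectivity in all degrees ensures no contribution from $\HH^{n+1}$ of the sub.

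A cleaner alternative avoids citing that comparison. The triangle $\Nyg^{\geqslant a}\to\varphi^*\Prism\to\calQ^a_1$ has $\calQ^a_1$ filtered by $\gr^b\simeq\tau_{\leqslant b}(\Prism/\xi)$ for $b<a$. After base change, each graded piece is a complex of $C$-vector spaces whose $\HH^n$ has dimension $\sum_{i\leqslant b}h^{i,n-i}$ when $n\leqslant b$, by Hodge degeneration. Summing over $b<a$ already gives total length $\sum_{b<a}\sum_{i\leqslant \min(b,n)}\ldots$ in each degree. That is an upper bound on the length of $\HH^n(\calQ^a_1\otimes\BdR)$, and it equals $S_a$ exactly when all connecting maps in the filtration spectral sequence vanish.

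\textbf{Main obstacle.} The hard step is this degeneration: showing that the $\BdR$-lattice given by $\HH^n$ of $\Nyg^{\geqslant a}\otimes\BdR$ sits inside $\HH^n_{\mathrm{dR}}(X_\eta/\BdR)$ with elementary divisors exactly $\xi^{\max(a-i,0)}$ of multiplicity $h^{i,n-i}$, and that $\HH^{n}$ of the sub injects. I would first try to deduce it from the strictness of the Hodge filtration on the $\BdR$-de~Rham complex, using a filtered splitting of the Hodge filtration on the proper smooth rigid space. Failing that, I would use a dimension count: compare the total length of the filtered pieces against an Euler-characteristic-type lower bound over all $n$, and force equality degree by degree. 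The vanishing for $j\notin[0,r)$ is automatic from the first step.
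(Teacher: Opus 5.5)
Your reduction to $r=1$ is correct and is exactly the paper's last step. The finite filtration of Lemma~\ref{lem:filtration} has pieces $\varphi^{j'}_*\calP^a_{1,\mathfrak{X}/\Oc}$. The element $\varphi^{-j'}(\xi)$ becomes a unit in $\varphi^j_*\BdR$ for $j'\neq j$, because its image under $\theta\varphi^j$ is nonzero. So only the piece with $j'=j$ survives, and $\varphi^j$ transports its length.

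\textbf{The gap.} The gap is the case $r=1$, where all the content lies. You correctly isolate the $E_1$-degeneration of the Nygaard spectral sequence of $\calQ^a_1\otL\BdR$ as the crux, but you leave it unproved.

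Your first route assumes, with only a vague citation, that $\HH^n(\Nyg^{\geqslant a}\otL\BdR)$ injects into $\HH^n$ with elementary divisors $\xi^{\max(a-i,0)}$ of multiplicity $h^{i,n-i}$. That is essentially the proposition itself. It also mixes up the two sides. The complex $\varphi^*\Prism\otL\BdR$ is the de~Rham lattice, i.e.\ $\Prism$ completed at $\varphi^{-1}(\xi)$. By contrast, $\Prism\otL\BdR$ is completed at $\xi$, where $\Prism/\xi$ is Hodge--Tate, and it is the \'etale lattice. The map $\varphi_\Prism$ is the inclusion of the former into the latter, as in Lemma~\ref{lem:infinityhodge}.

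Your fallback via an Euler-characteristic count cannot work. A nonzero differential in the filtration spectral sequence removes equal lengths from two adjacent degrees, so alternating sums never detect it. What is needed is a lower bound, or an injectivity statement, in each single degree $n$.

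\textbf{The missing idea.} The paper maps the Nygaard-filtered quotient by Frobenius into a target whose filtration visibly splits. Since $\varphi_\Prism$ carries $\Nyg^{\geqslant b}$ into $\xi^b\Prism$, it induces a filtered map
\[
 \calP^a_{1,\mathfrak{X}/\Oc}\longrightarrow\RG(\X,\Prism)\otL_{\Ainf}\Ainf/\xi^a ,
\]
from the Nygaard filtration to the $\xi$-adic filtration. On the $b$-th graded piece, for $0\leqslant b<a$, this map is the truncation map $\RG(\X,\tau_{\leqslant b}(\Prism/\xi))\to\RG(\X,\Prism/\xi)$, up to twist.

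After inverting $p$, Hodge--Tate degeneration for $X_\eta$ and Lemma~\ref{lem:spectral} make this map injective on cohomology. They also give the source dimension $\sum_{i\leqslant b}h^{i,n-i}$ in degree $n$.

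The target's $\xi$-adic spectral sequence degenerates for a simple reason. The complex $\RG(\X,\Prism)[1/p]$ has finite free cohomology over $\Ainf[1/p]$, so $\RG(\X,\Prism)\otL\BdR$ splits into shifted free modules. Lemma~\ref{lem:spectral}, applied to the displayed map, then forces degeneration of the source and gives
\[
\ell^n=\sum_{b<a}\sum_{i\leqslant b}h^{i,n-i}=S_a\{h^{i,n-i}\}.
\]

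Alternatively, you could keep your upper bound $\ell^n\leqslant S_a$ and supply the missing degreewise lower bound from Frobenius. The image of $\HH^n(\RG(\X,\varphi^*\Prism)\otL\BdR)$ in $\HH^n(\RG(\X,\Prism)\otL\BdR)/\xi^a$ factors through $\HH^n(\calP^a_{1,\mathfrak{X}/\Oc}\otL\BdR)$. Its length is $S_a$ by Lemma~\ref{lem:infinityhodge}. Some such input through $\varphi_\Prism$ is indispensable.
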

We use coherent generic-fiber comparison
\( \RG(\X,\Omega^i_{\X/\Oc})[1/p]
 \simeq\RG(X_\eta,\Omega^i_{X_\eta/C}).\) \cite[\S1.15, Theorem~5 and Corollary~7; \S2.4, Proposition~3]{BoschFormalRigid}. 

\begin{lemma}\label{lem:spectral}
Let $\mathrm{Fil}^\bullet A\to \mathrm{Fil}^\bullet B$ be a map of finitely filtered complexes in an abelian category.
Suppose that, for some $r\geqslant1$, the induced map on $E_r$-pages is injective in every
bidegree and the spectral sequence of $\mathrm{Fil}^\bullet B$ degenerates at $E_r$. Then the
spectral sequence of $\mathrm{Fil}^\bullet A$ also degenerates at $E_r$.
\end{lemma}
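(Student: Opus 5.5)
We compare the two spectral sequences through their pages. For a finitely filtered complex, the $E_s$-page is a subquotient of $E_1$. Degeneration at $E_r$ means that every differential $d_s$ with $s\geqslant r$ vanishes, so that $E_r=E_\infty$. The map of filtered complexes induces a morphism of spectral sequences, $f_s:E_s(A)\to E_s(B)$ for all $s\geqslant r$, compatible with the differentials: $d_s^B\circ f_s=f_s\circ d_s^A$. We proceed by induction on $s\geqslant r$, carrying two assertions: $d_s^A=0$, and $f_s$ is injective in every bidegree.

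For the base case $s=r$, injectivity of $f_r$ holds by hypothesis. Since $\mathrm{Fil}^\bullet B$ degenerates at $E_r$, we have $d_r^B=0$. Hence $f_r\circ d_r^A=d_r^B\circ f_r=0$, and injectivity of $f_r$ forces $d_r^A=0$.

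For the inductive step, suppose $d_s^A=0$, $d_s^B=0$ and $f_s$ is injective. Then $E_{s+1}(A)=E_s(A)$ and $E_{s+1}(B)=E_s(B)$, and $f_{s+1}$ is simply $f_s$ under these identifications, so $f_{s+1}$ is injective. Since $d_{s+1}^B=0$ by the degeneration of $B$, the same argument as in the base case gives $d_{s+1}^A=0$.

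Because the filtrations are finite, only finitely many pages need to be checked before both sequences stabilize at $E_\infty$. Thus $d_s^A=0$ for all $s\geqslant r$, which is degeneration of the spectral sequence of $\mathrm{Fil}^\bullet A$ at $E_r$. The only point needing care is the identification $f_{s+1}=f_s$ once both differentials vanish; this follows from the naturality of the construction of the pages as homology of $(E_s,d_s)$. No genuine obstacle is expected.
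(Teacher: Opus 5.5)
Your proposal is correct and follows essentially the same argument as the paper: the vanishing target differential together with injectivity of the $E_r$-map kills the source differential, the map stays injective on the next page because both differentials vanish, and induction over the subsequent pages finishes the proof. Your explicit check that $f_{s+1}=f_s$ once both differentials vanish is the one detail the paper leaves implicit.
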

\begin{proof}
The map of $E_r$-pages is injective, and the target differential vanishes. Hence the source
differential also vanishes. The map remains injective on the next page, and induction gives the
same assertion on every subsequent page.
\end{proof}

\begin{proof}[Proof of Proposition~\ref{prop:genericlength}]
We first treat $r=1$ and $j=0$. Apply the filtered Frobenius \(\varphi_\Prism\) \eqref{eq:filteredfrobenius}, truncated
at $a$, to obtain
\begin{equation}\label{eq:filteredmap}
 \calP^a_{1,\mathfrak{X}/\Oc}\longrightarrow \RG(\X,\Prism)\otL_{\Ainf} \Ainf/\xi^a.
\end{equation}
After base change to $\BdR$, its grade $b$, for $0\leqslant b<a$, is the natural map
\[
 \RG\bigl(\X,\tau_{\leqslant b}(\Prism/\xi)\bigr)[1/p]\{b\}
 \longrightarrow
 \RG\bigl(\X,\Prism/\xi\bigr)[1/p]\{b\}.
\]
After inverting $p$, the Postnikov spectral sequence of the target identifies with the Hodge--Tate
spectral sequence of $X_\eta$ \cite[Theorem~9.2(i)]{BMS}, which degenerates by
\cite[Theorem~1.7(ii)]{BMS}.

In Postnikov degree $i$, the map above induces the identity on graded pieces for $i\leqslant
b$ and has zero source for $i>b$. It is therefore injective on the cohomology of every graded
piece. Since the target spectral sequence degenerates, Lemma~\ref{lem:spectral} implies that the
source spectral sequence also degenerates. Consequently, its degree-$n$ cohomology has dimension
\( \sum_{0\leqslant i\leqslant b}h^{i,n-i}.\)
The induced map on $E_\infty$ is injective, so the map on cohomology is injective as well. 

After inverting \(p\), the spectral sequence of the finite $\xi$-adic filtration on 
\(\RG(\X,\Prism)\otL_{\Ainf} \Ainf/\xi^a\) degenerates at $E_1$. To see this, put $C_0=\RG(\X,\Prism)\otL_{\Ainf} \BdR$. By
\cite[Theorem~1.8]{BMS} and Frobenius pullback, $\RG(\X,\Prism)[1/p]$ has finite free cohomology
over $\Ainf[1/p]$. It therefore splits noncanonically as the direct sum of its cohomology modules in
their degrees. Base change gives the same splitting for $C_0$, with finite free $\BdR$-cohomology.
The splitting induces a splitting of the filtered complex $C_0\otL_{\BdR}\BdR/\xi^a$ with its finite
$\xi$-adic filtration, proving the claimed degeneration. 

Applying Lemma~\ref{lem:spectral} to \(\calP^a_{1,\mathfrak{X}/\Oc}\rightarrow \RG(\X,\Prism)\otL_{\Ainf} \Ainf/\xi^a\) after inverting \(p\) now gives the exact length
\begin{equation}\label{eq:ordinarylength}
 \ell_{\BdR}^n(\calP^a_{1,\mathfrak{X}/\Oc}\otL_{\Ainf} \BdR)
 =\sum_{b=0}^{a-1}\sum_{i=0}^b h^{i,n-i}
 =S_a\{h^{i,n-i}\}.
\end{equation}
For $\ell\ne j$, the element $\varphi^{-\ell}(\xi)$ is a unit in $\varphi^j_*\BdR$. Indeed its image under $\theta\varphi^j$
is nonzero; the cyclotomic formula gives $p$ for a positive Frobenius shift and a nonzero ratio of
successive cyclotomic differences for a negative shift. If $0\leqslant j<r$, the finite filtration
of Lemma~\ref{lem:filtration} has exactly one surviving factor after tensoring with
$\varphi^j_*\BdR$, namely $\varphi_*^j\calP^a_1$; otherwise all factors vanish. Frobenius
transports the length of the surviving factor to \eqref{eq:ordinarylength}, proving
\eqref{eq:genericlength}.
\end{proof}

By Lemma~\ref{lem:perfectness}, we may apply Lemma~\ref{lem:determinant} to $\calP^a_r$. Using
\eqref{eq:specialcomparison} and \eqref{eq:genericlength}, we obtain, for every
$r\geqslant1$,
\begin{equation}\label{eq:finiteinequality}
 \ell^n\bigl(\RG\bigl(X_k,W\Omega^\bullet/\Nyg^{\geqslant a}_r W\Omega^\bullet\bigr)\bigr)
       \geqslant rS_a\{h^{i,n-i}\}.
\end{equation}

\begin{proof}[Proof of Theorem~\ref{thm:main}]
Combining \eqref{eq:finiteinequality} and \eqref{eq:geometricgrowth} gives
\[
 rS_a\{h^{i,n-i}\}\leqslant rS_a\{h_W^{i,n-i}\}+O(1)
\]
for every fixed $a\geqslant1$ and all $r\geqslant1$. Divide by $r$ and let $r\to\infty$. This proves
the inequalities in Lemma~\ref{lem:support} for $a\geqslant1$. For $a\leqslant0$ both weighted sums vanish, since the geometric
sequences have no negative internal indices. Lemma~\ref{lem:support} gives the polygon inequality. 
The generic Hodge polygon has the same endpoint as the Newton and Hodge--Witt polygons of the
special fiber.
\end{proof}

By Lemma~\ref{lem:telescope}, the inequality can also be written as
\begin{equation}\label{eq:dominodefect}
 \sum_{i<a}(a-i)h^{i,n-i}(X_\eta)
 -\sum_{\lambda<a}(a-\lambda)m_{\lambda}
 \leqslant T^{a-1,n-a+1}(X_k).
\end{equation}
Thus the domino at the cut bounds the difference between the Hodge and Newton weighted sums,
including when some Hodge--Witt numbers are negative.

\begin{remark}[The first cut]
For $a=1$, the special quotient is $W_r\mathcal O_{X_k}$ in degree zero. The finite-level inequality
becomes
\( \ell^n(\RG(X_k,W_r\mathcal O_{X_k})) \geqslant r\,\dim_C \mathrm{H}^n(X_\eta,\mathcal O_{X_\eta}),\)
and its limit gives
\( h^{0,n}(X_\eta)\leqslant h_W^{0,n}(X_k)=m^{0,n}(X_k)+T^{0,n}(X_k).\)
\end{remark}

\subsection{The abstract Hodge upper bound}\label{subsec:abstracthodge}
The Newton polygon need not give a sharp upper bound for the Hodge polygon of the generic fiber.
Indeed, when the special fiber is Mazur--Ogus, \(\Hdg_n(X_\eta)=\Hdg_n(X_k)\). 
It is natural to ask whether the Newton upper bound can be refined. A
candidate is a bound given by the abstract Hodge polygon. More precisely, for a proper smooth
variety \(X_k/k\), 
let $\HH=\HH^n_{\cris}(X_k/W)/\mathrm{tors}$, and write
\( 0\leqslant\alpha_1\leqslant\cdots\leqslant\alpha_{b_n}\)
for the elementary-divisor exponents of the linearized crystalline Frobenius $\sigma^*\HH\to \HH$. 
Explicitly, \[\HH/F\HH\cong \prod_{i=1}^{b_n} W/p^{\alpha_i}.\]
The \emph{abstract Hodge polygon} $\Hdg_n^{\mathrm{abs}}(X_k)$ in degree $n$ has slopes
$\alpha_1,\ldots,\alpha_{b_n}$, each with horizontal length one. Its endpoint is $(b_n,t_{N,n})$,
and the Newton--Hodge inequality for $F$-crystals \cite[\S1.4]{KatzSlope} gives
\[
 \operatorname{Newt}_n(X_k)\geqslant\Hdg_n^{\mathrm{abs}}(X_k).
\]
See also \cite[IV, Definition~3.4(iii) and Theorem~3.5]{Ekedahl}. The abstract Hodge numbers also have the following interpretation: they are the Hodge and Hodge--Witt numbers of 
the canonical Mazur--Ogus subquotient of \(\Ht^n(\RG(X_k,W\Omega^\bullet_{X_k/k}))\in \dc\) 
in the sense of Ekedahl's two fundamental filtrations
\cite[I, Theorem~4.5(iv); IV, proof of Theorem~3.5]{Ekedahl}.

We spell out this interpretation using Ekedahl's two fundamental filtrations. 
For each object \(M\in \dc\), Ekedahl defined a maximal subobject \(F^1(M)\subseteq M\) whose total
cohomology is concentrated in degree zero and whose \(\mathrm{H}^0\) is torsion. 
Next, there is a maximal quotient \(M/F^2(M)\) of \(M/F^1(M)\) whose non-zero total cohomology is in degree one.
Similarly, there is a maximal quotient \(M/G^2(M)\) whose non-zero total cohomology is in degree one, 
and \(G^1(M)\) is the maximal subobject of \(G^2(M)\) such that its only non-zero total cohomology 
is \(\mathrm{H}^0\) and it is torsion. Consider
\(M\coloneqq \Ht^n\RG(X_k,W\Omega^\bullet_{X_k/k})\in\dc\). Then Ekedahl's fundamental filtrations give a canonical subquotient
\[
 M^{\mathrm{MO}}\coloneqq F^2(M)/F^1(M)\simeq G^2(M)/G^1(M).
\]
The middle subquotient $M^{\mathrm{MO}}$ is Mazur--Ogus, and its zeroth total cohomology is
the Frobenius lattice $\HH$ \cite[I, Theorem~4.5 and Corollary~4.5.3]{Ekedahl}. Thus, writing
$h_{\mathrm{abs}}^{i,n-i}(X_k)=\#\{j:\alpha_j=i\}$, we have
\begin{equation}\label{eq:abstractmo}
 h_{\mathrm{abs}}^{i,n-i}(X_k)
 =h^{i,-i}(M^{\mathrm{MO}})=h_W^{i,-i}(M^{\mathrm{MO}}),
\end{equation}
as in \cite[proof of IV, Theorem~3.5]{Ekedahl}.
The difference from the Hodge--Witt numbers of $X_k$ is measured by the additional domino
contributions outside this subquotient. More precisely, put
\[
 e_i=T^{i,-i}(M)-T^{i,-i}(M^{\mathrm{MO}})\geqslant0.
\]
Here the inequality follows from \cite[III, (2.6), as used in the proof of IV, Theorem~3.5]{Ekedahl}.
The slope numbers of $M$ and $M^{\mathrm{MO}}$ agree, so we have
\begin{equation}\label{eq:extradominodefect}
 h_W^{i,n-i}(X_k)-h_{\mathrm{abs}}^{i,n-i}(X_k)
 =e_i-2e_{i-1}+e_{i-2}.
\end{equation}
These are the extra domino contributions; the Mazur--Ogus subquotient itself may have nonzero
domino numbers. This interpretation does not require crystalline cohomology to be torsion-free.
Ekedahl also proved the following chain of inequalities in full generality:
\[\mathrm{Newt}_n(X_k)\geqslant \Hdg_n^{\mathrm{abs}}(X_k) \geqslant \HW_n(X_k) \geqslant \Hdg_n(X_k).\]
This suggests the following conjecture. 
\begin{conjecture}
  In the setting of Theorem~\ref{thm:main},
  \(\Hdg_n^{\mathrm{abs}}(X_k) \geqslant\Hdg_n(X_\eta)\).
\end{conjecture}
We have not proved the conjecture in full generality and do not know whether it is true. 
The conjecture is also motivated by the description of the generic fiber's Hodge polygon in terms of
elementary divisors. The following description follows from Wu's reconstruction of the
Hodge--Tate filtration \cite{WuHT}.

\begin{lemma}\label{lem:infinityhodge}
Let $\mathfrak X$ be a proper smooth formal scheme over $\mathcal O_C$. Then we have

\[\HH^n\RG(\X,\Prism)[1/p]/\varphi_{\Prism}\cong \prod_{i=1}^{n}(\BdR/\xi^{i})^{h_{X_\eta}^{i,n-i}}\]
\end{lemma}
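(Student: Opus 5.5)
The plan is to reduce the statement to the structure of $\HH^n\RG(\X,\Prism)[1/p]$ over $\Ainf[1/p]$ near the divisor $\xi$, and then read off the elementary divisors from the Hodge--Tate filtration. Here "$/\varphi_\Prism$" denotes the cokernel of the linearized Frobenius $\varphi^*\HH^n(\Prism)[1/p]\to\HH^n(\Prism)[1/p]$. This cokernel is supported at $\xi$. Indeed, by \cite[Theorem~1.8]{BMS} and Frobenius pullback, $\HH^n\RG(\X,\Prism)[1/p]$ is finite free over $\Ainf[1/p]$, and the linearized Frobenius becomes an isomorphism after inverting $\xi$. The cokernel is therefore a torsion module over $\Ainf[1/p]$ annihilated by a power of $\xi$, and it may be computed after base change to $\BdR$, a DVR with uniformizer $\xi$. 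Over $\BdR$ it is of the form $\prod_i(\BdR/\xi^{i})^{c_i}$, and the task is to show $c_i=h^{i,n-i}_{X_\eta}$.

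To identify the $c_i$, I would use the filtered Frobenius \eqref{eq:filteredfrobenius} together with the argument of Proposition~\ref{prop:genericlength}. Set $N=\HH^n(\varphi^*\Prism)\otimes\BdR$ and $M=\HH^n(\Prism)\otimes\BdR$, so that $\varphi_\Prism\colon N\to M$ is injective with the cokernel above.

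The key identity is that the preimage of $\xi^aM$ under $\varphi_\Prism$ equals the image of $\HH^n(\Nyg^{\geqslant a}\varphi^*\Prism)\otimes\BdR$ in $N$. This is a statement about strictness of the Nygaard filtration on cohomology after inverting $p$. I would deduce it from the degeneration established in the proof of Proposition~\ref{prop:genericlength}: the map \eqref{eq:filteredmap} is injective on cohomology after $\otimes\BdR$, and the cofiber of $\Nyg^{\geqslant a}\to\varphi^*\Prism$ is $\calQ^a_1$.

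Granting this identity, the lengths
\[\ell_{\BdR}\bigl(N/\varphi_\Prism^{-1}(\xi^aM)\bigr)=\ell_{\BdR}\bigl(\varphi_\Prism(N)/(\varphi_\Prism(N)\cap\xi^aM)\bigr)\]
equal the image of $\HH^n(\calP^a_1\otimes\BdR)$ in $M/\xi^aM$. By \eqref{eq:ordinarylength} and the injectivity just recalled, this length is $S_a\{h^{i,n-i}\}$.

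Next I would compare this with the elementary divisors. If $\varphi_\Prism(N)$ has Smith exponents $\beta_1\le\dots\le\beta_b$ relative to $M$, then
\[\ell\bigl(\varphi_\Prism(N)/(\varphi_\Prism(N)\cap\xi^aM)\bigr)=\sum_j\max(a-\beta_j,0)=S_a\{c\},\]
where $c_i=\#\{j:\beta_j=i\}$. Equality $S_a\{c\}=S_a\{h\}$ for all $a$ then forces $c_i=h^{i,n-i}$, since the second difference of $S_a$ in $a$ recovers the sequence. The index range $1\le i\le n$ is explained as follows. Exponent $0$ contributes nothing to the cokernel. Slopes above $n$ do not occur: the Hodge numbers vanish for $i>n$, and equivalently $\xi^n$ kills the cokernel. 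This is the content of Wu's reconstruction of the Hodge--Tate filtration \cite{WuHT}, which one may cite directly.

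The main obstacle is the strictness step, namely that $\varphi_\Prism^{-1}(\xi^aM)$ is exactly the Nygaard image on cohomology rather than merely containing it. I would first try the long exact sequence for $\Nyg^{\geqslant a}\to\varphi^*\Prism\to\calQ^a_1$ combined with the injectivity of $\HH^n(\calQ^a_1)\to\HH^n(M/\xi^a)$ after $\otimes\BdR$ proved above. An element of $N$ whose Frobenius lies in $\xi^aM$ has zero image in $\HH^n(\Prism/\xi^a)$, hence zero image in $\HH^n(\calQ^a_1)$, hence it lifts to $\HH^n(\Nyg^{\geqslant a})$.

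One further point needs care: $\HH^n$ of the $\BdR$-base-changed complex must agree with $\HH^n\otimes\BdR$. This follows from flatness of $\BdR$ over $\Ainf[1/p]$ and the freeness of the cohomology.
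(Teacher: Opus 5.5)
There is a genuine gap in the length step, and it is not the step you flag as the main obstacle. Your argument that $\varphi_\Prism^{-1}(\xi^aM)$ equals the image of $\HH^n(\RG(\X,\Nyg^{\geqslant a}\varphi^*\Prism)\otL_{\Ainf}\BdR)$ in $N$ is correct. It uses only two facts: the map $\HH^n(\calP^a_1\otL_{\Ainf}\BdR)\to M/\xi^aM$ is injective, and the composite $N\to\HH^n(\calP^a_1\otL_{\Ainf}\BdR)\to M/\xi^aM$ is $\varphi_\Prism$ followed by reduction.

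However, this identifies $N/\varphi_\Prism^{-1}(\xi^aM)$ only with the \emph{image} of $N$ in $\HH^n(\calP^a_1\otL_{\Ainf}\BdR)$. In other words, the image of $\varphi_\Prism(N)$ in $M/\xi^aM$ is merely a submodule of the image of $\HH^n(\calP^a_1\otL_{\Ainf}\BdR)$, and you assert without justification that the two coincide. That equality requires $N\to\HH^n(\calP^a_1\otL_{\Ainf}\BdR)$ to be surjective. Equivalently, the connecting map into $\HH^{n+1}(\RG(\X,\Nyg^{\geqslant a}\varphi^*\Prism)\otL_{\Ainf}\BdR)$ must vanish, which means this group must be $\xi$-torsion-free; this is a strictness statement for the Nygaard filtration on cohomology.

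Nothing you invoke provides this. The degeneration in the proof of Proposition~\ref{prop:genericlength} concerns the finite quotient $\calP^a_1$ and its map to $\RG(\X,\Prism)\otL_{\Ainf}\Ainf/\xi^a$. It says nothing about the image of $\HH^n(\varphi^*\Prism)$ inside that quotient. What your argument actually proves is $S_a\{c\}\leqslant S_a\{h^{i,n-i}\}$ for all $a$: the polygon of the $\xi$-adic elementary divisors lies on or above the Hodge polygon. The facts you use are equally consistent with, say, $n=1$, $(h^{0,1},h^{1,0})=(1,1)$ and $(c_0,c_1)=(0,2)$, where $S_a\{c\}=2a-2<2a-1=S_a\{h\}$ for $a\geqslant1$.

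This missing surjectivity is essentially the whole content of the lemma. The map $\xi^{-a}\varphi_\Prism$ sends $\varphi_\Prism^{-1}(\xi^aM)$ onto $\xi^{-a}\varphi_\Prism(N)\cap M$, and the reduction of the latter modulo $\xi$ is the step $F_a$ of the filtration in the paper's proof. On $\HH^n(\Nyg^{\geqslant a})$ this map is the divided Frobenius, so \eqref{eq:filteredfrobenius} shows that $F_a$ lies in the image of $\HH^n(\RG(\X,\tau_{\leqslant a}(\Prism/\xi)))[1/p]$, which is the Hodge--Tate filtration step. Equality of these two filtrations for all $a$ is equivalent to the lemma, and it is exactly what the paper imports from Wu's theorem \cite[Theorem~1.1]{WuHT}.

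If you want to avoid citing Wu, that containment gives $\sum_{i\leqslant a}c_i\leqslant\sum_{i\leqslant a}h^{i,n-i}$ for all $a$. Together with the endpoint identity $\sum_j\beta_j=\sum_i i\,h^{i,n-i}$, these inequalities would force $c_i=h^{i,n-i}$. But the endpoint identity is itself an additional input that your proposal does not supply. It would follow, for instance, from showing that your surjectivity holds for large $a$, e.g.\ by identifying $\Nyg^{\geqslant a}\varphi^*\Prism$ with $\xi^a\Prism$ via $\varphi_\Prism$ once $a$ is at least the relative dimension.
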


\begin{proof}The argument relies mainly on \cite[Theorem~1.1]{WuHT}. Recall that $\BdR$ is the $\xi$-adic
completion of $\Ainf[1/p]$. Write $\xi^{a_1},\ldots,\xi^{a_{b_n}}$ for the elementary divisors
of the linearized Frobenius over $\BdR$. Let's set \(h_{\inf}^{i,n-i}(M)=\#\{j:a_j=i\}.\)
All these constructions depend only on \(\HH^n\RG(\X,\Prism)[1/p]\), so we may replace $\HH^n\RG(\X,\Prism)$ by
the finite free envelope of its torsion-free quotient \cite[Proposition~4.13]{BMS}.

Consider \(V\coloneqq \mathrm H^n_{\mathrm{\acute et}}(X_\eta,\mathbb Q_p)\), \(\Lambda=V\otimes_{\mathbb Q_p}B_{\mathrm{dR}}^+\) 
and \(\Xi=\mathrm H^n_{\mathrm{cris}}(X_\eta/B_{\mathrm{dR}}^+).\)
The comparisons in \cite[Proposition~9.17 and its proof; proofs of Theorems~13.1 and~14.5(i)]{BMS} identify the
linearized Frobenius with the inclusion $\Xi\hookrightarrow\Lambda$
\cite[Theorems~1.8 and~14.5(i)]{BMS}.
Consequently, there is a basis $e_1,\ldots,e_{b_n}$ of $\Lambda$ such that
\[\Xi=\bigoplus_{j=1}^{b_n}\xi^{a_j}B_{\mathrm{dR}}^+e_j.\]

Consider the increasing filtration on $\Lambda/\xi\Lambda$ given by
\( F_m=\operatorname{im}\!\left( \xi^{-m}\Xi\cap\Lambda\to\Lambda/\xi\Lambda \right).\)
The displayed decomposition gives
\(\dim_C(F_m/F_{m-1})=\#\{j:a_j=m\}.\)
By \cite[Theorem~1.1]{WuHT}, this is the Hodge--Tate filtration.
Its graded pieces are
$\HH^{n-m}(X_\eta,\Omega^m_{X_\eta/C})(-m)$, so comparison of dimensions
proves the assertion.
\end{proof}

The matrix-specialization argument below is closely related to the proof of
\cite[Proposition~2.5]{Yao}; the integral crystalline comparison allows us to retain
the abstract Hodge polygon before passing to Newton slopes.

\begin{proof}[Proof of Theorem~\ref{thm:abstracthodge}]
Put $M=\HH^n\RG(\X,\Prism)$. Applying \cite[Theorem~14.5(iii)]{BMS} to $\varphi^*M$ and
then restricting scalars along $\varphi$, we see that the first torsion-freeness hypothesis makes
$M$ finite free over $\Ainf$, and the second gives a Frobenius-compatible identification
\[
 M\otimes_{\Ainf} W\simeq \sigma_*\mathrm{H}^n_{\cris}(X_k/W).
\]
As before, $\BdR$, the $\xi$-adic completion of $\Ainf[1/p]$, is a complete discrete valuation ring
with uniformizer $\xi$. The linearized Frobenius $F_M:\varphi^*M\to M$ becomes an
isomorphism after inverting $\xi$. Let $0\leqslant\beta_1\leqslant\cdots\leqslant\beta_{b_n}$ be its
elementary-divisor exponents over $\BdR$.

Choose bases and let $G$ be the matrix of $F_M$. For $1\leqslant\ell\leqslant b_n$, every
$\ell$-by-$\ell$ minor of $G$ belongs to $\xi^{\beta_1+\cdots+\beta_\ell}\BdR$. The contraction
argument in the proof of Lemma~\ref{lem:determinant}, applied to $\xi$, gives
\[
 \xi^e\BdR\cap \Ainf=\xi^e\Ainf\qquad(e\geqslant0).
\]
Since $\rho(\xi)=p$, every specialized minor is divisible by $p^{\beta_1+\cdots+\beta_\ell}$. The
elementary-divisor formula over $W$ therefore gives
\(
 \sum_{j=1}^{\ell}\alpha_j\ \geqslant\
 \sum_{j=1}^{\ell}\beta_j\qquad(1\leqslant\ell\leqslant b_n).
\) By Lemma~\ref{lem:infinityhodge}, the $\beta_j$ are exactly the Hodge slopes of
$X_\eta$, proving the asserted inequality.
Since $\xi$ is prime and $G$ is invertible over $\Ainf[1/\xi]$, we have
$\det G=u\xi^e$ for some $u\in\Ainf^\times$, where $e=\sum_j\beta_j$.
Applying $\rho$ gives
\[
 \sum_j\alpha_j=v_p(\det\rho(G))=e=\sum_j\beta_j.
\]
Hence the two polygons have common endpoints.
\end{proof}

\begin{remark}\label{rem:nonfreebkf}
In general, for a nonfree \(\Ainf\) module $M$ equipped with a semilinear \(\varphi_M\) such that \((M/\varphi_M(M))[1/\xi]=0\), 
the abstract Hodge polygon of its crystalline specialization need not lie above 
the Hodge polygon formed by \(\xi\)-adic elementary divisors. There is a concrete counterexample. Put 
\( M=(p,\varphi^{-1}(\xi))e_1\oplus \Ainf e_2\subset \Ainf e_1\oplus \Ainf e_2.\)
The regular sequence $(p,\varphi^{-1}\xi)$ shows that $M$ is finitely presented and torsion-free, but not free.
It nevertheless satisfies $M[1/p]=\Ainf[1/p]^{\oplus2}$. Restrict to $M$ the $\varphi$-semilinear
Frobenius given by
\(\varphi_M(e_1)=\xi e_2\), \(\varphi_M(e_2)=\xi e_1.\)
Since $\xi\in(p,\varphi^{-1}(\xi))$, this map preserves $M$. After inverting $\xi$, 
both $(p,\varphi^{-1}(\xi))$ and $(p,\xi)$ become
the unit ideal, so \((M/\varphi_M(M))[1/\xi]=0\). Its elementary divisors over $\BdR$ are
$(1,1)$.

Crystalline specialization gives \( (M\otimes_{\Ainf} W)/\mathrm{tors}\simeq pWe_1\oplus We_2\).
In the basis $pe_1,e_2$ of the latter lattice, Frobenius has matrix
\(
 \begin{pmatrix}0&1\\p^2&0\end{pmatrix}.
\)
Its abstract Hodge polygon therefore has slopes $(0,2)$ and lies strictly below the generic Hodge
polygon between their common endpoints. This shows that the comparison can fail algebraically without the freeness assumption. We have not
found an explicit geometric counterexample to Theorem~\ref{thm:abstracthodge} with its crystalline
torsion-freeness hypotheses removed.
\end{remark}

\begin{corollary}[Determination of Hodge numbers]\label{cor:determinedhodge}
Assume that $\mathrm{H}^n_{\cris}(X_k/W)$ and $\mathrm{H}^{n+1}_{\cris}(X_k/W)$ are torsion-free,
and suppose that the diagonal cohomology object
\[\Ht^n\RG(X_k,W\Omega^\bullet_{X_k/k})\in\dc\]
is Mazur--Ogus in the sense of \cite[I, Definition~4.3]{Ekedahl}. Then
\(h^{i,n-i}(X_\eta)=h_W^{i,n-i}(X_k)\) for every \(i\).
In particular, the degree-$n$ Hodge numbers are the same for all proper smooth liftings of $X_k$
covered by Theorem~\ref{thm:main}.
\end{corollary}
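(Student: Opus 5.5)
The plan is to squeeze the generic Hodge polygon between two bounds that coincide under the Mazur--Ogus hypothesis. Theorem~\ref{thm:main} gives the lower bound $\Hdg_n(X_\eta)\geqslant\HW_n(X_k)$, with common endpoints. The torsion-freeness of $\mathrm{H}^n_{\cris}$ and $\mathrm{H}^{n+1}_{\cris}$ is exactly the hypothesis of Theorem~\ref{thm:abstracthodge}, which gives the upper bound $\Hdg^{\mathrm{abs}}_n(X_k)\geqslant\Hdg_n(X_\eta)$, again with common endpoints. It therefore suffices to show that $\Hdg^{\mathrm{abs}}_n(X_k)=\HW_n(X_k)$ as polygons. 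We then convert polygon equality into equality of the multiplicities $h^{i,n-i}(X_\eta)=h_W^{i,n-i}(X_k)$.

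For the coincidence of the bounds, put $M=\Ht^n\RG(X_k,W\Omega^\bullet_{X_k/k})\in\dc$. If $M$ is Mazur--Ogus, then the maximality properties of Ekedahl's fundamental filtrations should force $F^1(M)=0$ and $F^2(M)=M$. Indeed, a Mazur--Ogus object has no nonzero torsion subobject with cohomology only in degree zero, and no nonzero quotient concentrated in degree one; I expect this to follow directly from \cite[I, Definition~4.3 and Theorem~4.5]{Ekedahl}. Hence $M^{\mathrm{MO}}=M$ and all $e_i$ vanish. Then \eqref{eq:extradominodefect} gives $h_W^{i,n-i}(X_k)=h_{\mathrm{abs}}^{i,n-i}(X_k)$ for every $i$. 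In particular the Hodge--Witt numbers are nonnegative, and the two polygons coincide.

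The polygons now satisfy $\HW_n(X_k)\geqslant\Hdg_n(X_\eta)\geqslant\HW_n(X_k)$, so they are equal. Both have nonnegative multiplicities. The converse direction of Lemma~\ref{lem:support}, applied both ways, gives $S_a\{h^{i,n-i}(X_\eta)\}=S_a\{h_W^{i,n-i}(X_k)\}$ for all $a\in\mathbb Z$. Taking second differences in $a$ recovers the individual entries, since $S_{a+1}-2S_a+S_{a-1}=u^a$. The final assertion is then immediate: the right-hand side depends only on $X_k$, so every lifting covered by Theorem~\ref{thm:main} has the same degree-$n$ Hodge numbers.

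The main obstacle is the identification $M^{\mathrm{MO}}=M$ in the second step. It depends on Ekedahl's precise definition of Mazur--Ogus objects and on the characterization of $F^1$ and $F^2$ by maximality. If the filtration argument is awkward, I would instead invoke \cite[I, Theorem~4.5]{Ekedahl} or \cite[IV, Theorem~3.5]{Ekedahl} directly. The aim is to cite the statement that, for a Mazur--Ogus object, the Hodge--Witt numbers equal the abstract Hodge numbers of its Frobenius lattice.
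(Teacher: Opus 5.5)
Your proposal is correct and follows the paper's proof: you squeeze $\Hdg_n(X_\eta)$ between Theorems~\ref{thm:main} and~\ref{thm:abstracthodge}, use that a Mazur--Ogus object is its own Mazur--Ogus subquotient (which the paper also simply asserts) to get $\Hdg^{\mathrm{abs}}_n(X_k)=\HW_n(X_k)$, and read off the multiplicities from the resulting polygon equality. The only minor difference is in the last step: you get nonnegativity of the $h_W^{i,n-i}(X_k)$ from $h_W=h_{\mathrm{abs}}$ and recover the entries as second differences of $S_a$, whereas the paper cites \cite[IV, Corollary~3.3.1]{Ekedahl} for nonnegativity.
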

\begin{proof}
Since $\Ht^n\RG(X_k,W\Omega^\bullet_{X_k/k})$ is Mazur--Ogus, its Mazur--Ogus subquotient is itself.
Therefore \(\Hdg^{\mathrm{abs}}_n(X_k)=\HW_n(X_k).\)
Theorems~\ref{thm:main} and~\ref{thm:abstracthodge} force $\Hdg_n(X_\eta)$ to equal this common
polygon. The Hodge--Witt numbers of a Mazur--Ogus object are nonnegative \cite[IV,
Corollary~3.3.1]{Ekedahl}, so equality of polygons gives equality of their slope multiplicities.
\end{proof}

\subsection{Examples in mixed characteristic}\label{subsec:mixedexample}

It also seems common for the generic Hodge polygon to lie strictly above the Hodge--Witt
polygon of the special fiber. When the special fiber is not Mazur--Ogus, taking its product
with a supersingular or non-Mazur--Ogus variety can create additional dominos, which account for
such a gap. Nevertheless, the lower bound in Theorem~\ref{thm:main} can be attained even when
the special fiber has many extra dominos. We will also see two examples in which liftings
realize all possible generic-fiber Hodge polygons between the Newton and Hodge--Witt polygons.

We analyze four examples in mixed characteristic. The first is the product of an Enriques surface
and an elliptic curve with supersingular reduction. The second is obtained by lifting the
characteristic-\(p\) surface \(S\) to \(\mathcal{S}\) and taking its product with an elliptic curve
with supersingular reduction. 
The third example is the Godeaux--Serre approximation of a group scheme, multiplied by an ordinary curve of genus two.

Finally, we analyze Li's example in detail. We compute the Hodge numbers of a collection of \(p-1\)
liftings of his special fiber \(Z\) and show that their Hodge polygons attain both its Newton
polygon and its Hodge--Witt polygon. 
In particular, when \(p=5\) or \(7\), these liftings realize every possible generic-fiber Hodge
polygon. 

\subsubsection{A supersingular Enriques surface}\label{subsubsec:mixedEnriques}
Let $k$ be algebraically closed of characteristic \(2\), and let $T/k$ be a
supersingular Enriques surface, so that \(\operatorname{Pic}^{\tau}_{T/k}\simeq\alpha_2.\)
By \cite[Theorem~4.10]{LiedtkeEnriquesLift}, $T$ admits a smooth projective
lifting $\mathcal T$ over a finite, possibly ramified extension $\mathcal O_K$
of $W(k)$.
Choose a supersingular elliptic curve $E_0/k$ and a smooth projective lifting
$\mathcal E/\mathcal O_K$, enlarging $\mathcal O_K$ if necessary, and put
\(\mathcal Z=\mathcal T\times_{\mathcal O_K}\mathcal E.\) 
The generic fiber is the product of a characteristic-zero Enriques surface
and an elliptic curve. Hence
\[
 \bigl(h^{i,2-i}(Z_\eta)\bigr)_i=(0,11,0),\qquad
 \bigl(h^{i,3-i}(Z_\eta)\bigr)_i=(0,10,10,0).
\]

For the Hodge--Witt numbers, write $M=R\Gamma(T,W\Omega_T^\bullet)$.
The de Rham--Witt calculation for a supersingular Enriques surface gives
\(\mathrm H^0(M)\simeq W\), \(\mathrm H^1(M)\simeq W(-1)^{\oplus10}\),
\(\mathrm H^2(M)\simeq k\oplus k(-1)\oplus W(-2)\)
with zero internal differentials; see, for example,
\cite[II, Proposition~7.3.6(b)(iii)]{IllusieDRWCrystalline}. 
Here the two copies of $k$ have $F=V=0$.
The Ext vanishing used in Proposition~\ref{prop:quotient-hodge-witt} again
splits $M$ into its external cohomology modules. 
By the completed K\"unneth formula and
\cite[Proposition~3.39]{LiYang}, the summands $k$ and $k(-1)$ produce
$U_{-1}$ and $U_{-1}(-1)$ in external degree two, and
$U_1$ and $U_1(-1)$ in external degree three.
Thus \(T^{0,2}(Z_0)=T^{1,2}(Z_0)=T^{0,3}(Z_0)=T^{1,3}(Z_0)=1,\)
and all other domino numbers vanish.
The degree-two Newton slopes are all one, with multiplicity eleven,
and the degree-three Newton slopes are all $3/2$, with multiplicity twenty.
The Hodge--Witt formula gives the special-fiber Hodge--Witt numbers
\begin{equation}\label{eq:enriques-mixed-example}
 \bigl(h_W^{i,2-i}(Z_0)\bigr)_i=(1,9,1),\qquad
 \bigl(h_W^{i,3-i}(Z_0)\bigr)_i=(1,9,9,1).
\end{equation}

\subsubsection{\(\mathcal{S}\times\mathcal{E}\)}\label{subsubsec:mixedS*E}
We can also consider \(\mathcal{S}\times\mathcal{E}\). 
The quotient surfaces $S$ constructed in Section \ref{subsubsec:S*E} also admit smooth projective
liftings to mixed characteristic.
For odd $p$, the curve and its automorphism lift over $W(k)[\zeta_p]$ as follows:
\[
 v^2=\frac{(1+(\zeta_p-1)u)^p-1}{(\zeta_p-1)^p},
 \qquad
 \beta(u,v)=(\zeta_pu+1,v),
\]
as in \cite[\S2]{Li20}.
Choose the canonical lift of the ordinary elliptic factor, together with the lift of its
nonzero \'etale $p$-torsion point. The resulting diagonal action remains free, and its
quotient gives a smooth projective lifting $\mathcal S$ of $S$.
In characteristic two, such a lifting of the order-four quotient is given
explicitly in \cite[\S1.2]{Lang95}.

Choose a lifting $\mathcal E$ of a supersingular elliptic curve over the same
base and set $\mathcal Z=\mathcal S\times\mathcal E$.
The generic surface $S_\eta$ has $q=1$, $p_g=0$, and $b_2=2$.
Consequently, the generic Hodge numbers and special Hodge--Witt numbers are
\[
 \begin{array}{c|cc}
 n&\bigl(h^{i,n-i}(Z_\eta)\bigr)_i
  &\bigl(h_W^{i,n-i}(Z_0)\bigr)_i\\ \hline
 2&(1,5,1)&(2,3,2)\\
 3&(0,4,4,0)&(1,3,3,1),
 \end{array}
\]
where the special-fiber calculation is given in Theorem~\ref{thm:strict-specialization}.

\subsubsection{A mixed-characteristic Godeaux--Serre example}
\label{subsubsec:mixed-godeaux-serre}

The approximation of $B\alpha_p$ used in \cite{LiYang} also yields a
strict mixed-characteristic polygon comparison. We construct a lifting
of such an approximation and take its product with a curve of genus two.
The resulting degree-three Hodge--Witt numbers are all nonnegative.

Let $k$ be an algebraically closed field of characteristic $p>0$. 
Put \( \mathcal O=W(k)[\pi]/(\pi^p-p)\) and 
\[f(T)=\frac{(1+\pi T)^p-1}{\pi^p}=T^p+\sum_{j=1}^{p-1}\frac{\binom pj}{p}\pi^jT^j.\]
The ring $\mathcal O$ is a complete discrete valuation ring with
uniformizer $\pi$. The polynomial $f$ is monic with coefficients in
$\mathcal O$, so \(\mathcal G=\operatorname{Spec}\bigl(\mathcal O[T]/(f(T))\bigr)
\) is finite flat of rank $p$. It is a commutative group scheme for the law
\(T_1*T_2=T_1+T_2+\pi T_1T_2.\)  Write
$K=\operatorname{Frac}(\mathcal O)$.  
Reduction modulo $\pi$ and the generic
coordinate $z=1+\pi T$ give
\begin{equation}\label{eq:gs-group-fibers}
 \mathcal G_k\simeq\alpha_p,
 \qquad \mathcal G_K\simeq\mu_p,
 \qquad \mathcal G_{\overline K}\simeq\mathbb Z/p.
\end{equation}
The relative Godeaux--Serre construction
\cite[Corollary~3.7]{Kothari} gives a complete intersection
$\mathcal Y\subset\mathbb P^N_{\mathcal O}$ of relative dimension four
with a free $\mathcal G$-action, whose quotient
$\mathcal X=\mathcal Y/\mathcal G$ is smooth projective over $\mathcal O$
with geometrically connected fibers. This result applies over any
noetherian local base, in particular over the mixed-characteristic ring
$\mathcal O$. If we take the ambient representation sufficiently large,
the equivariant Lefschetz theorem and projective bundle formula give
maps
\[
 X_t\longrightarrow B\mathcal G_t\times B\mathbb G_m
 \qquad(t=k,\overline K)
\]
which are Hodge $4$-equivalences; see
\cite[Propositions~5.10--5.11]{ABM} and \cite[\S4]{Kothari}.

Over $\overline K$, the group $\mathcal G_{\overline K}$ is finite
\'etale and linearly reductive. Thus its classifying stack has no
positive-degree Hodge cohomology. The factor $B\mathbb G_m$ contributes
the usual class in bidegree $(1,1)$, and hence
\begin{equation}\label{eq:gs-fourfold-generic}
 \begin{aligned}
 \bigl(h^{i,1-i}(X_{\overline K})\bigr)_{i=0}^{1}&=(0,0),\\
 \bigl(h^{i,2-i}(X_{\overline K})\bigr)_{i=0}^{2}&=(0,1,0),\\
 \bigl(h^{i,3-i}(X_{\overline K})\bigr)_{i=0}^{3}&=(0,0,0,0).
 \end{aligned}
\end{equation}
On the special fiber, the Hodge $4$-equivalence is also a
de Rham--Witt $4$-equivalence by \cite[Proposition~4.12]{LiYang}.
The calculation of $B\alpha_p\times B\mathbb G_m$ in
\cite[Theorem~5.11 and Corollary~5.12]{LiYang} therefore gives
\begin{equation}\label{eq:gs-fourfold-special}
 \begin{aligned}
 \bigl(h_W^{i,1-i}(X_k)\bigr)_{i=0}^{1}&=(0,0),\\
 \bigl(h_W^{i,2-i}(X_k)\bigr)_{i=0}^{2}&=(0,1,0),\\
 \bigl(h_W^{i,3-i}(X_k)\bigr)_{i=0}^{3}&=(1,-2,1,0).
 \end{aligned}
\end{equation}

Choose an ordinary smooth projective curve $C_0/k$ of genus two, and a
smooth projective lifting $\mathcal C/\mathcal O$. Such ordinary curves
exist in every characteristic, and the lifting exists by smoothness of
the moduli stack of smooth curves \cite[Tag~0E9C]{Stacks}.
Set
\[
 \mathcal Z=\mathcal X\times_{\mathcal O}\mathcal C.
\]
This is smooth projective of relative dimension five with geometrically
connected fibers. Since $C_0$ is ordinary, its Hodge--Witt cohomology is
finite free over $W(k)$, with the same ranks as its Hodge cohomology.
The cohomological K\"unneth formula
\cite[III, Corollary~2.1.2]{EkedahlII} has no Tor terms and yields
\begin{equation}\label{eq:gs-ordinary-curve-product}
 h_W^{i,j}(Z_k)
 =h_W^{i,j}(X_k)+2h_W^{i-1,j}(X_k)
  +2h_W^{i,j-1}(X_k)+h_W^{i-1,j-1}(X_k),
\end{equation}
where entries with a negative index are zero. Substituting these values into the K\"unneth formula gives
\begin{equation}\label{eq:gs-mixed-numbers}
 \begin{aligned}
 \bigl(h^{i,2-i}(Z_{\overline K})\bigr)_{i=0}^{2}
   &=(0,2,0)
    =\bigl(h_W^{i,2-i}(Z_k)\bigr)_{i=0}^{2},\\
 \bigl(h^{i,3-i}(Z_{\overline K})\bigr)_{i=0}^{3}
   &=(0,2,2,0),\\
 \bigl(h_W^{i,3-i}(Z_k)\bigr)_{i=0}^{3}
   &=(1,0,3,0).
 \end{aligned}
\end{equation}

\subsubsection{Li's construction with arbitrary exponent}
\label{subsubsec:li}

Let $p\geqslant5$ and put $g=(p-1)/2$. In this subsection we consider a variation of the construction
in \cite[\S2]{Li20}. Write $\pi=\zeta_p-1$. Over a suitable finite extension
$\mathcal O_K$ of $\mathbb Z_p[\zeta_p]$, let $\mathcal C$ be the smooth proper
curve with affine equation
\[
 v^2=\frac{(1+\pi u)^p-1}{\pi^p},
 \qquad \beta(u,v)=(\zeta_pu+1,v).
\]
Choose $\mathcal E/\mathcal O_K$ to be the base change of the canonical lift of an ordinary
elliptic curve, with a section $P$ lifting a nonzero \'etale $p$-torsion point.
Write $\tau_P$ for translation by $P$.
For $1\leqslant m\leqslant p-1$, set
\[
 \mathcal X_m=
 \frac{\mathcal C\times\mathcal C\times\mathcal E}
      {\langle(\beta,\beta^m,\tau_P)\rangle}.
\]
These actions are free, so the quotients are smooth and projective.

All the geometric special fibers are isomorphic. Indeed, over an algebraically
closed residue field $k$, the special fiber of $\mathcal C$ is
\[
 C_0:\ v^2=u^p-u,\qquad \beta(u,v)=(u+1,v).
\]
For $m\in\mathbb F_p^\times$, the automorphism
\[
 \tau_m(u,v)=(mu,\sqrt m\,v)
\]
satisfies $\tau_m\beta\tau_m^{-1}=\beta^m$. Thus
$(\mathrm{id},\tau_m,\mathrm{id})$ induces an isomorphism between the special
fibers of $\mathcal X_1$ and $\mathcal X_m$. A single residue-field extension
containing $\mathbb F_{p^2}$ suffices for all these isomorphisms. We write $Z$
for the common geometric special fiber and $X_{m,\eta}$ for the generic fiber.

For $m\in\mathbb F_p^\times$, put \begin{equation}
  \label{eq:li-count} c_m=\#\{(a,b):1\leqslant a,b\leqslant g, a+mb\equiv0\pmod p\}.
\end{equation}
If $\langle x\rangle_p$ denotes the least positive residue of a nonzero class
modulo $p$, then
\begin{equation}\label{eq:li-count-floor}
 c_m
 =\#\{1\leqslant b\leqslant g:\langle mb\rangle_p>g\}
 =\sum_{b=1}^{g}
   \left(\left\lfloor\frac{mb+g}{p}\right\rfloor
         -\left\lfloor\frac{mb}{p}\right\rfloor\right).
\end{equation}
Here the last expression uses the representative $1\leqslant m\leqslant p-1$.

\begin{proposition}\label{prop:li-all-hodge}
For every $1\leqslant m\leqslant p-1$, the generic-fiber Hodge numbers are
\begin{equation}\label{eq:li-all-hodge}
 \begin{aligned}
 \bigl(h^{i,2-i}(X_{m,\eta})\bigr)_{i=0}^{2}
   &=(c_m,p+2-2c_m,c_m),\\
 \bigl(h^{i,3-i}(X_{m,\eta})\bigr)_{i=0}^{3}
   &=(c_m,p+1-c_m,p+1-c_m,c_m).
 \end{aligned}
\end{equation}
\end{proposition}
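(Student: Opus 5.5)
The plan is to compute the Hodge numbers of $X_{m,\eta}$ as $G$-invariants of the Hodge cohomology of the cover, where $G=\langle(\beta,\beta^m,\tau_P)\rangle\cong\mathbb Z/p$. Since $G$ acts freely and $C$ has characteristic zero, $H^q(X_{m,\eta},\Omega^p)=H^q(C_\eta\times C_\eta\times E_\eta,\Omega^p)^G$. Translation $\tau_P$ acts trivially on the Hodge cohomology of $E_\eta$, whose Hodge diamond is $(1;1,1;1)$. Consider $\beta$ acting on $H^0(C_\eta,\Omega^1)$, which has dimension $g=(p-1)/2$. Because $C_\eta/\langle\beta\rangle\cong\mathbb P^1$, there are no invariants on $H^1$. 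The eigenvalues of $\beta$ on $H^{1,0}$ are the $\zeta^{a}$ for $a$ running over a set $A$ of $g$ residues, and those on $H^{0,1}$ are the complex conjugates, i.e.\ the residues in $-A$. Together these are all $p-1$ nontrivial characters, once each.

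First I identify $A$. After the change of coordinates $x=1+\pi u$, the curve becomes $v^2\propto x^p-1$ and $\beta$ becomes $x\mapsto\zeta x$. The basis $x^{k}dx/v$ for $0\le k\le g-1$ is multiplied by $\zeta^{k+1}$, so $A=\{1,\dots,g\}$. The residues in $-A$ are then $\{g+1,\dots,p-1\}$.

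Next I compute the $G$-invariants with the Künneth formula. For the pieces $H^{1,0}$, $H^{0,1}$ of the first $C$ and the second $C$, the generator acts on the second factor by $\beta^m$, with eigenvalue $\zeta^{mb}$. An invariant occurs when $a+mb\equiv0$, with $a$ and $b$ taken from the relevant eigenvalue sets.
\begin{itemize}
\item For $H^{2,0}$ (the $(1,0)\otimes(1,0)$ part), the count is $\#\{a,b\in A: a+mb\equiv0\}=c_m$.
\item For $(0,1)\otimes(0,1)$, negating both eigenvalue sets gives the same count $c_m$.
\item For the mixed type $(1,0)\otimes(0,1)$, we need $a\in A$ and $b\in -A$, i.e.\ $a\equiv mb'$ with $b'\in A$. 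This count is $g-c_m$, since for each $b'\in A$ the residue $-mb'$ lies in exactly one of $A$ and $-A$.
\item The other mixed type likewise contributes $g-c_m$.
\end{itemize}
Terms involving $H^0$ or $H^2$ of a curve factor are invariant only when paired with an invariant class, and $C$ has none in $H^1$. So such terms contribute only the pieces $H^0\otimes H^0$, $H^0\otimes H^2$, $H^2\otimes H^0$, $H^2\otimes H^2$ of $C\times C$.

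I then assemble the counts. For the invariant Hodge numbers of $C\times C$ in degree $2$, $h^{2,0}=c_m$ and $h^{0,2}=c_m$. For $h^{1,1}$, the $H^1\otimes H^1$ part contributes $2(g-c_m)$ and $H^0\otimes H^2\oplus H^2\otimes H^0$ contributes $2$, giving $p+1-2c_m$. In degree $1$ the invariants vanish, and $H^0$ and $H^4$ each contribute $1$. Multiplying by the diamond of $E$ gives degree $2$ of $X$ as $(c_m,\,p+1-2c_m+1,\,c_m)$, which matches the claim $(c_m,\,p+2-2c_m,\,c_m)$. Degree $3$ equals $\deg2(C\times C)\otimes H^1(E)$ plus $\deg 3(C\times C)=0$ tensored with $H^0(E)$. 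This gives $h^{3,0}=c_m$ and $h^{2,1}=c_m+(p+1-2c_m)=p+1-c_m$, and symmetrically for the other two entries, as claimed.

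The floor expression \eqref{eq:li-count-floor} follows because, for fixed $b$, a solution $a\in A$ exists exactly when $\langle -mb\rangle_p\le g$, i.e.\ when $\langle mb\rangle_p>g$. The main obstacle is justifying the eigenvalue set $A=\{1,\dots,g\}$ for the lifted curve. I expect to handle it by the coordinate change above, taking care that the normalization of $\zeta$ relative to the generator $\beta$ is consistent throughout.
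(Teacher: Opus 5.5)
Your argument is correct and is essentially the paper's proof: Künneth on $C_\eta\times C_\eta\times E_\eta$, the character decomposition $H^0(C_\eta,\Omega^1)=\bigoplus_{a=1}^{g}\chi^a$, $H^1(C_\eta,\mathcal O)=\bigoplus_{a=1}^{g}\chi^{-a}$, the invariant counts $c_m, c_m, g-c_m, g-c_m$ on the four pieces of $H^1\otimes H^1$, and the trivial action on the Hodge cohomology of $E_\eta$. The differences are minor: you check the eigenvalue set directly via $x=1+\pi u$, which turns $\beta$ into $x\mapsto\zeta_p x$, whereas the paper cites Li's Proposition~2.1; and your appeal to ``complex conjugates'' over the $p$-adic field $C$ should be replaced by equivariant Serre duality $H^1(C_\eta,\mathcal O)\cong H^0(C_\eta,\Omega^1)^\vee$, which is exactly what the paper uses.
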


\begin{proof}
Let $\chi(\beta)=\zeta_p$. By \cite[Proposition~2.1]{Li20} and Serre duality,
\[
 \mathrm H^0(C_\eta,\Omega^1_{C_\eta})
   =\bigoplus_{a=1}^{g}\chi^a,
 \qquad
 \mathrm H^1(C_\eta,\mathcal O_{C_\eta})
   =\bigoplus_{a=1}^{g}\chi^{-a}.
\]
On the second curve factor, the action of $\beta^m$ multiplies these character
exponents by $m$. Thus the invariant parts of bidegrees $(2,0)$ and $(0,2)$
on $C_\eta\times C_\eta$ both have dimension $c_m$. Each of the two mixed
summands in bidegree $(1,1)$ has invariant dimension $g-c_m$. Including the
two curve classes gives invariant dimension
\(2+2(g-c_m)=p+1-2c_m\)
in bidegree $(1,1)$. The odd-degree Hodge cohomology of the two-curve product
has no invariants. Translation acts trivially on the Hodge cohomology of
$E_\eta$. The K\"unneth formula, followed by taking invariants in
characteristic zero, now gives \eqref{eq:li-all-hodge}.
\end{proof}

The integers $c_m$ satisfy \(c_{m^{-1}}=c_m\) and \(c_{-m}=g-c_m.\) Indeed, 
interchanging $a$ and $b$ in \eqref{eq:li-count} proves the first symmetry.
For each $b$, exactly one of $\langle mb\rangle_p$ and
$\langle-mb\rangle_p$ lies in $\{g+1,\ldots,p-1\}$, proving the second.
Moreover, $c_m=0$ if and only if $m=1$, and $c_m=g$ if and only if $m=-1$.
Indeed, put $A=\{1,\ldots,g\}\subset\mathbb F_p^\times$. If $c_m=g$, then $-mA=A$.
Summing the elements of these sets gives
\((-m-1)\sum_{a\in A}a=0\) in $\mathbb F_p$, while
$\sum_{a\in A}a=g(g+1)/2$ is nonzero modulo $p$.
Hence $m=-1$. The converse is immediate, and the characterization of $c_m=0$
follows from $c_{-m}=g-c_m$. 

Now we compute the Hodge--Witt numbers of the special fiber. 
First consider the curve \(C_0\). Its Hasse--Witt matrix is obtained from the coefficients of
$(u^p-u)^g$, and the relevant coefficients all vanish.
Thus the slopes of $\mathrm H^1_{\mathrm{cris}}(C_0/W)[1/p]$ are all $1/2$.
Rational crystalline descent for the free quotient gives
\begin{equation}\label{eq:li-newton-low}
   \begin{array}{c|c}
 n&\text{Newton slopes of }Z\\ \hline
 1&0,\ 1\\
 2&1\text{ with multiplicity }p+2\\
 3&1,\ 2\text{, each with multiplicity }p+1.
 \end{array}
\end{equation}

On the special fiber, the representation
$\mathrm H^0(C_0,\Omega^1_{C_0})$ is a single unipotent Jordan block $J_g$:
in the basis $u^a\,du/v$, $0\leqslant a<g$, the action is translation in $u$.
Since $J_g^\vee\simeq J_g$, descent of top forms gives
\( h^{0,3}(Z)=h^{3,0}(Z) =\dim_k(J_g\otimes_kJ_g)^{\mathbb Z/p} =\dim_k\operatorname{End}_{\mathbb Z/p}(J_g)=g.\)
Ekedahl's inequality \cite[IV, Theorem~3.3]{Ekedahl} gives
$h_W^{0,3}(Z)\leqslant g$.
On the other hand, Theorem~\ref{thm:main}, applied to $\mathcal X_{p-1}$
in degree three at cutoff one, gives $h_W^{0,3}(Z)\geqslant g$.
Hence $h_W^{0,3}(Z)=g$.
Furthermore, $\chi(\mathcal O_Z)=0$ and
$h_W^{0,0}(Z)=h_W^{0,1}(Z)=1$.
Crew's formula \cite[Theorem~4]{Crew} consequently gives
$h_W^{0,2}(Z)=h_W^{0,3}(Z)=g$.
Since the degree-two and degree-three Newton polygons have no slopes below one,
we obtain \(T^{0,2}(Z)=T^{0,3}(Z)=g.\)
Domino duality gives $T^{1,3}(Z)=T^{0,2}(Z)$ and
$T^{1,2}(Z)=T^{0,3}(Z)$
\cite[IV, Corollary~3.5.1]{EkedahlI}.
Substituting these domino numbers and slope numbers into the definition gives the Hodge--Witt numbers of $Z$:
\begin{equation}\label{eq:li-special-hodge-witt}
 \bigl(h_W^{i,2-i}(Z)\bigr)_i=(g,3,g),\qquad
 \bigl(h_W^{i,3-i}(Z)\bigr)_i=(g,g+2,g+2,g).
\end{equation} 

\begin{remark}\label{rem:li-count-values}
The integers $c_m$ need not attain every value between $0$ and $g$, and their
intermediate values need not occur twice. For example, when $p=11$, $1$ and $4$ do not occur, 
whereas $2$ and $3$ each occur four times. 
\end{remark}

Notice that when $p=5$ or $7$, all possible generic Hodge polygons are realized by one of the \(\mathcal{X}_m\).
Let $X_\eta$ be the generic fiber of any smooth proper lifting of $Z$ covered by
Theorem~\ref{thm:main}. Crystalline comparison and generic Hodge--de Rham degeneration give
\(b_1=2,b_2=p+2,b_3=2p+2.\)
The Newton slopes in \eqref{eq:li-newton-low}, together with
$t_{H,n}=t_{N,n}$, give \(h^{1,0}=h^{0,1}=1\), \(h^{1,1}+2h^{2,0}=p+2
=h^{0,2}+h^{1,1}+h^{2,0}.\)
Thus $h^{2,0}=h^{0,2}$. Put $t=h^{0,2}(X_\eta)$.
Since coherent Euler characteristics are preserved under specialization,
\( 0=\chi(\mathcal O_Z)=\chi(\mathcal O_{X_\eta}) =1-1+t-h^{0,3}(X_\eta),\)
so $h^{0,3}(X_\eta)=t$. Serre duality in dimension three gives
\( h^{3,0}=h^{0,3}\) and \(h^{2,1}=h^{1,2},\)
and thus the Hodge numbers must be \[
 \begin{aligned}
 \bigl(h^{i,2-i}(X_\eta)\bigr)_{i=0}^{2}
   &=(t,p+2-2t,t),\\
 \bigl(h^{i,3-i}(X_\eta)\bigr)_{i=0}^{3}
   &=(t,p+1-t,p+1-t,t).
 \end{aligned}
\]
All remaining Hodge numbers are determined by Serre duality and $h^{0,0}=1$.
Theorem~\ref{thm:main} gives
\[
 0\leqslant t=h^{0,3}(X_\eta)\leqslant h_W^{0,3}(Z)=g.
\]
For $p=5$ and $p=7$, respectively, the values of $(c_1,\ldots,c_{p-1})$ are
$(0,1,1,2)$ and $(0,2,1,2,1,3)$. Thus every permitted value of $t$ is realized by one of
the liftings $\mathcal X_m$.

For all sufficiently large primes, the values of $c_m$ do not cover all integers in $[0,g]$.
As $p$ tends to $\infty$ through the primes, for each fixed positive odd integer $k$, one has
$c_k=g/2-g/(2k)+O_k(1)$ and $c_{-k}=g/2+g/(2k)+O_k(1)$,
whereas for each fixed positive even integer $k$, one has $c_k=g/2+O_k(1)$.
Indeed, the floor formula counts lattice points in a union of $O(k)$ intervals; replacing
these counts by interval lengths gives an error $O_k(1)$, and $c_{-k}=g-c_k$.
Allowing $m$ to vary with $p$, it can also be shown that the set of possible limits of $c_m/g$
is exactly
\begin{equation}\label{eq:li-limit-set}
 \left\{\frac12\right\}
 \cup\left\{\frac12\pm\frac1{2k}:k\geqslant1\text{ odd}\right\}.
\end{equation}
In particular, uniformly for $m\ne\pm1$, one has
$g/3-o(g)\leqslant c_m\leqslant2g/3+o(g)$.

\section*{Acknowledgements}
We are very grateful to Shizhang Li for his insightful suggestions and stimulating discussions.
Y.Y.\ is partially supported by a grant from the National Natural Science Foundation of China
(NSFC-12231001).

\section*{AI provenance}
OpenAI language models, including ChatGPT, were used in developing arguments and preparing the
exposition of this manuscript. The author is responsible for the mathematical content and its final
verification.

\end{document}